\newtheorem{lemma}{Lemma}[section]
\newtheorem{theorem}[lemma]{Theorem}
\newtheorem{proposition}[lemma]{Proposition}
\newtheorem*{thm*}{Theorem} 
\theoremstyle{definition}
\newtheorem{definition}[lemma]{Definition}
\newtheorem{remark}[lemma]{Remark}
\newtheorem{example}[lemma]{Example}
\newtheorem{construction}[lemma]{Construction}
\DeclareMathOperator{\End}{End}
\DeclareMathOperator{\Aut}{Aut}
\DeclareMathOperator{\ad}{ad}
\DeclareMathOperator{\Inn}{Inn}
\DeclareMathOperator{\Lie}{Lie}
\title{Hermitian cubic norm structures and groups of relative rank one}
\author{Michiel Smet}
\email{Michiel.Smet@UGent.be}
\begin{document}
	
	\maketitle
	
	\begin{abstract}
		Hermitian cubic norm structures were recently introduced in order to study the class of skew-dimension one structurable algebras (which are typically only defined over fields of characteristic different from $2$ and $3$) over arbitrary rings and fields.		
		Here, we generalize the quartic norm for these algebras and show that elements for which the quartic norm is invertible are conjugate invertible.
		
		This leads to the notion of a division hermitian cubic norm structure, defined as one in which the quartic norm is anisotropic. We classify such structures in terms of the Tits index of an associated (rank one) adjoint simple linear algebraic group and show that any adjoint linear algebraic group with such a Tits index defines a corresponding hermitian cubic norm structure.
	\end{abstract}
	
	\section{Introduction}
	
	Structurable algebras were introduced by Allison \cite{allison1978class, Allison1979ModelsOI} and were used to construct certain isotropic simple Lie algebras, with particular emphasis on the exceptional Lie algebras. Division structurable algebras are especially useful for understanding simple algebraic groups of rank one \cite{Tom}. However, these algebras are typically only considered over fields of characteristic different from $2$ and $3$, though there are occasional generalizations to broader settings \cite{AllisonHein81,Allison1993NonassociativeCA, OKP}.
	
	In the case of skew-dimension $1$ structurable algebras, which are quite well-understood \cite{Allison84, All90, Tom2019}, we use the hermitian cubic norm structures introduced in \cite{Michiel2025} to explore what happens over arbitrary rings. More precisely, a hermitian cubic norm structure defines a ``skew-dimension $1$" structurable algebra $A$ such that
	\begin{itemize}
		\item the construction of the Lie algebra employed in \cite{Allison1979ModelsOI} still works, and
		\item one has a nicely behaved ``projective elementary group", as studied in \cite{All99} for structurable algebras and Kantor pairs and, for example, in \cite{Faulkner89,Loos95,Loos19} for Jordan pairs.
	\end{itemize} Such a nicely behaved projective elementary group is precisely what we need if we want to study rank $1$ linear algebraic groups.
	
	First, we recover a generalization of the often recurring quartic norm on the structurable algebra \cite{Allison84, All90, Tom2019}. For this generalization we are able to show that an element is conjugate invertible in the terminology of \cite{Allison84} (or equivalently one-invertible in the terminology of \cite{All99}), if and only if its quartic norm is invertible. 
	
	Having established this, we are in a position to classify the division hermitian cubic norm structures. These fall into some distinct classes that are related to algebraic groups of rank one. To be precise, in the finite-dimensional case one has, depending on the map $\sharp$, either
	\begin{enumerate}
		\item $\sharp = 0$, in which case the associated group is of the form $\prescript{2}{}{A}_{n+2,1}$, or
		\item $\sharp \neq 0$, which implies that the hermitian cubic norm structure comes from a semisimple cubic norm structure $J$ over a field, in which case:
		\begin{itemize}
			\item $J$ is absolutely simple and the associated group is of type $\prescript{2}{}{E}^{35}_{6,1}$, $E^{66}_{7,1}$ or $E_{8,1}^{133}$, or
			\item $J$ is not absolutely simple, with associated group $\prescript{1,2}{}{D}_{n,1}$, $\prescript{3,6}{}{D}^{9}_{4,1}$.
		\end{itemize}
	\end{enumerate}
	Furthermore, any simple linear algebraic group with Tits index listed above comes from a hermitian cubic norm structure.
	Infinite dimensional division hermitian cubic norm structures do also exist, and these can be thought of as falling into the classes $\prescript{2}{}{A}_{n,1}$ and $\prescript{1,2}{}{D}_{n,1}$.

	Since Hermitian cubic norm structures, and their split counterparts cubic norm pairs, can be placed in the context of Tits polygons \cite{muhlherr2022}, we remark that the non-infinite families we mention here are basically the buildings that are looked at in \cite[Chapter 3]{muhlherr2022}.

	Finally, we study the associated Moufang set with our division structurable algebra, employing different constructions. Using the elementary group divided out by a parabolic, we give a first, direct construction. Afterwards, we give a construction more in line with \cite{Tom06, Tom}, i.e., using a single root group and a permutation of the Moufang set, or the division pair approach of \cite{Loos14,Loos15}. 
	
	One important hermitian cubic norm structure is the ``universal" hermitian cubic norm structure on one generator introduced in Proposition \ref{prop: universal model} (it is only universal under the condition that the base ring is be a domain). We implemented this hermitian cubic norm structure on a computer, and used this model to prove Lemma \ref{lem: universal model one invertible} which is later used to prove that an element, in an arbitrary hermitian cubic norm structure, is one-invertible if and only if the quartic norm of the element is invertible.
	Details regarding the implementation, what we specifically computed\footnote{A file with the implementations and performed computations can be found on \url{https://github.com/MichielSmet/hermitian_cubic_norm}.} can be found in Appendix \ref{app A}. 
	
	\subsection{Outline}
	
	In section 2, we build towards some sort of universal hermitian cubic norm structure on a single generator.
	
	Section 3 begins by recalling some basic constructions related to hermitian cubic norm structures. Using these basic constructions, we derive the quartic norm and relate it to the notion of one-invertibility.
	
	In the fourth section, we examine the associated Moufang set using elementary methods. An alternative way to obtain this Moufang set would be as the typical Moufang set associated with the rank one algebraic group.
	
	In the final section, we present our classification of division hermitian cubic norm structures.
	
	Two appendices are included. The first explains how one can implement the model of Proposition \ref{prop: universal model} using basic computer algebra software and precisely which computations were executed using this implementation. The second contains some additional information on one-invertibility for another class of structurable algebras.

	\section{Hermitian cubic norm structures with $1$ generator}
	
	We work over a commutative unital ring $R$. By the category of $R$-algebras, often denoted as $R\textbf{-alg}$, we mean the category of commutative, unital, associative $R$-algebras. To any $R$-module $M$, we associate the functor $K \longmapsto K \otimes M$, which maps from $R\textbf{-alg}$ to $\textbf{Set}$.
	
	\subsection{Definition and direct consequences}

	\begin{definition}
		Consider two $R$-modules $M$ and $N$. A functor $f : M \longrightarrow N$ is called a \textit{polynomial law}. It is said to be \textit{ homogeneous of degree} $i$ if $f_L(\lambda m) = \lambda^i f_L(m)$ for all $\lambda \in L$ and $m \in L \otimes M$. 
		More background on polynomial laws can be found in \cite[section 12]{Skip2024}; the essential facts needed will be introduced directly.
	\end{definition}

	\begin{remark}
		To motivate the term polynomial law, note that 
		\[ f_{L[t]}(t m) = \sum_{i = 0}^n t^i f_{i,L}(m)\]
		for some homogeneous polynomial laws $f_{i,\cdot}$. To a homogeneous polynomial law of degree $n$, we can associate \textit{linearisations} $f^{(i,j)}$, satisfying
		\[ f_L(\lambda m + \mu n) = \sum_{i + j = n} \lambda^i \mu^j f^{(i,j)}_{L}(m,n).\]
		We remark that $f^{(i,j)}$ is homogeneous of degree $i$ in the first component, of degree $j$ in the second component, and that $f^{(n,0)}(a,b) = f(a)$ while $f^{(0,n)}(a,b) = f(b)$.
		Moreover, one has $f^{((i,j),k)} = f^{(i,(j,k))}$ as these can be recovered from \[f_{L[t_1,t_2,t_3]}(t_1 m + t_2 n + t_3 o) = \sum_{i + j + k = n} t_1^i t_2^j t_3^k f^{(i,j,k)}(m,n,o).\]
	\end{remark}

	We use the definition of hermitian cubic norm structures given in \cite[Definition 4.1.1]{Michiel2025}.
	
	\begin{definition}
		Let $R$ be a ring and take $\alpha \in R$ such that $1 - 4 \alpha $ is invertible. Set $K = R[t]/(t^2 - t + \alpha)$ and consider the involution $k \mapsto \bar{k}$ which is $R$-linear and maps $t$ to $1 - t$.
		Let $J$ be a $K$-module endowed with the following maps:
		\begin{itemize}
			\item a $K$-cubic map $N : J \longrightarrow K$,
			\item an $R$-quadratic map $\sharp : J \longrightarrow J$ such that $(k j)^\sharp = \bar{k}^2 j$ for all $k \in K$ and $j \in J$,
			\item the linearisation $(a , b) \mapsto a \times b$ of $\sharp$ which is $\bar{\cdot}$-antilinear in both arguments,
			\item a hermitian map $T : J \times J \longrightarrow K$.
		\end{itemize}
	For our convenience, we will call such a map $\sharp$ $K$-\textit{anti-quadratic}.
	This quadruple $(J,N,\sharp,T)$ is called a \textit{hermitian cubic norm structure} over $K/R$ if 
	\begin{enumerate}
		\item \label{ax1} $T(a,b^\sharp) = N^{(1,2)}(a,b)$,
		\item \label{ax2}  $(a^\sharp)^\sharp = N(a)a$
		\item \label{ax3} $(a^\sharp \times b) \times a = \overline{N(a)} b + T(b,a) a^\sharp$,
		\item \label{ax4} $N( T(a,b)a - a^\sharp \times b) = N(a)^2 \overline{N(b)}$.
	\end{enumerate}
hold over all scalar extensions $L[t]/(t^2 - t + \alpha)$ of $R[t]/(t^2 - t + \alpha)$ induced by extending $R$ to $L$.

In order to avoid confusion, we will consistently use $k a\times b$ for $k(a \times b) = (\bar{k} a) \times b$.
	\end{definition}

	Two important consequences are $v \times v = 2 v^\sharp$ and $T(v,v^\sharp) = 3 N(v)$, which follow immediately from the maps being quadratic or cubic.
A third, important fact is $T(a,b \times c) = T(b, a \times c)$ since both are the $(1,1,1)$-linearisation of $N$.
These three facts will be constantly used without further explanation. 

Before we start, it is worth noting that the last two properties (\ref{ax3}) and (\ref{ax4}) are basically consequences of the first two properties. 

	\begin{lemma}
		\label{lem: redundant axs}
		Consider $(J,N,\sharp, T)$ as in the definition of a hermitian cubic norm structure and assume that (\ref{ax1}) and (\ref{ax2}) are satisfied and that there exists a $j \in J$ with $N(j) \in K^\times$. Property (\ref{ax3}) is then automatically satisfied. Moreover, if there also exists a $k \in J$ for which $f(k) \in K^\times$ for some $K$-linear $f : J \longrightarrow K$, (\ref{ax4}) holds as well.
		\begin{proof}
			This can be shown using \cite[Proposition 12.24]{Skip2024}, which implies that any identity which holds for all $(a,b) \in J_L \times J_L$ with $N_L(a) f_L(b) \in L^\times$ for all scalar extensions $L$, will hold for all elements $(a,b)$, using the condition imposed in the moreover-case. Moreover, if it holds for all $(a,b) \in J_L \times J_L$ with only $N(a) \in L^\times$, it holds also for all $a,b$.
			
			Namely, (\ref{ax3}) follows from the $(3,1)$-linearisation of (\ref{ax2}) applied to $(a^\sharp,b)$ whenever $N(a)$ is invertible (no assumption on $b$ is necessary). \cite[Proposition 12.24]{Skip2024} allows us to say that it holds in general.
			
			To obtain (\ref{ax4}), one can adapt the proof of \cite[33.8]{Skip2024} which proves the same for cubic norm structures. Namely, one can show that $Q_{a^\sharp} Q_a b = \overline{N(a)}^2 b$, for all $a, b$. For the $b$ for which $f_L(b) \in (K \otimes L)^\times$, one can combine this with $(Q_a b)^\sharp = Q_{a^\sharp} b^\sharp$ to obtain \begin{align*}
				\overline{N(a)}^4 N(b) f_L(b)  & = f_L(Q_{a^\sharp} N(a)^2 \overline{N(b)} Q_a b) \\ & = f_L(Q_{a^\sharp} (T(a^{\sharp\sharp},b^{\sharp\sharp}) a^{\sharp\sharp} - a^{\sharp\sharp\sharp} \times b^{\sharp\sharp})\\ & =  f_L(Q_{a^\sharp} (Q_a b)^{\sharp \sharp}) \\ &= f_L(Q_{a^\sharp} N(Q_a b) Q_a b)  \\ & = \overline{N(Q_a b)} \overline{N(a)}^2 f_L(b),
			\end{align*}  which implies that (\ref{ax4}) holds under the stated conditions, using \cite[Proposition 12.24]{Skip2024} in the way indicated above.			
		\end{proof}
	\end{lemma}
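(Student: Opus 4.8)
Here is the approach I would take.

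The plan is to prove both assertions with a single device: verify each identity first on the locus where an appropriate norm is invertible, purely by linearising (\ref{ax1}) and (\ref{ax2}), and then invoke \cite[Proposition 12.24]{Skip2024} to upgrade it to an unrestricted identity. That proposition says that an identity in $(a,b)$ which holds over every scalar extension $L$ whenever $N_L(a)$ is invertible (respectively whenever $N_L(a)f_L(b)$ is invertible) is already an identity for all $(a,b)$; the hypotheses $N(j)\in K^\times$ and, in the moreover-case, $f(k)\in K^\times$ are exactly what makes the relevant invertibility loci nonempty.

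For (\ref{ax3}) I would compute the $(3,1)$-linearisation of (\ref{ax2}). Expanding $\bigl((x+y)^\sharp\bigr)^\sharp = N(x+y)(x+y)$ and isolating the part that is cubic in $x$ and linear in $y$ yields
\[ x^\sharp \times (x\times y) = N(x)\, y + N^{(2,1)}(x,y)\, x.\]
Now specialise to $x = a^\sharp$, $y = b$ and simplify: (\ref{ax2}) turns $(a^\sharp)^\sharp$ into $N(a)\,a$ and $N(a^\sharp)$ into $\overline{N(a)}^2$, while (\ref{ax1}) rewrites $N^{(2,1)}(a^\sharp,b)$ as $\overline{N(a)}\,T(b,a)$; keeping track of the $K$-antilinearity and the symmetry of $\times$, the whole equation collapses to $\overline{N(a)}\bigl((a^\sharp\times b)\times a\bigr) = \overline{N(a)}\bigl(\overline{N(a)}\, b + T(b,a)\, a^\sharp\bigr)$. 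Since no condition on $b$ was used, on the locus $\{N(a)\in K^\times\}$ we may cancel $\overline{N(a)}$ and obtain (\ref{ax3}); \cite[Proposition 12.24]{Skip2024} then gives it for all $a,b$.

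For (\ref{ax4}) the template is \cite[33.8]{Skip2024}, which proves the analogous statement for ordinary cubic norm structures. Writing $Q_a b := T(a,b)\,a - a^\sharp\times b$ for the operator inside the norm in (\ref{ax4}), I would first extract from (\ref{ax1})--(\ref{ax3}) — now all at our disposal — the two auxiliary identities $Q_{a^\sharp}Q_a b = \overline{N(a)}^2\, b$ and $(Q_a b)^\sharp = Q_{a^\sharp} b^\sharp$, valid for all $a,b$. For $b$ such that $f_L(b)$ is invertible, one then chains these together: apply the first identity (pulling the scalar $N(a)^2\overline{N(b)}$ through the $K$-antilinear $Q_{a^\sharp}$ and the $K$-linear $f_L$), recognise $N(a)^2\overline{N(b)}\, Q_a b$ as $Q_{a^{\sharp\sharp}}b^{\sharp\sharp}$, use the second identity twice to rewrite this as $(Q_a b)^{\sharp\sharp}$, which by (\ref{ax2}) equals $N(Q_a b)\, Q_a b$, and apply the first identity once more. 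The outcome is
\[ \overline{N(a)}^4 N(b)\, f_L(b) = \overline{N(Q_a b)}\;\overline{N(a)}^2\, f_L(b).\]
Cancelling $f_L(b)$ and, on $\{N(a)\in K^\times\}$, the factor $\overline{N(a)}^2$, we get $\overline{N(Q_a b)} = \overline{N(a)}^2 N(b)$, which is (\ref{ax4}) after applying the involution; two applications of \cite[Proposition 12.24]{Skip2024} — in the variable $b$ via $f$, then in $a$ via $N$ — remove both invertibility hypotheses.

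Conceptually there is little here: everything is a matter of squeezing (\ref{ax3}) and (\ref{ax4}) out of (\ref{ax1}) and (\ref{ax2}) by linearising and dividing by a norm. The genuine effort is bookkeeping — the maps $\times$, $T$, $\sharp$ and the derived operator $Q$ each carry their own $K$-(anti)linearity, so conjugation bars have to be tracked meticulously when moving scalars such as $N(a)$ and $N(b)$ in and out of them, since a single misplaced bar would quietly spoil the cancellations at the end. The one genuinely hermitian-specific (though routine) computation is the derivation of the two $Q$-identities from (\ref{ax1})--(\ref{ax3}); once those are secured, the rest of the argument is purely formal.
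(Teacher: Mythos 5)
Your proposal is correct and follows essentially the same route as the paper: axiom (3) via the $(3,1)$-linearisation of axiom (2) evaluated at $(a^\sharp,b)$ followed by cancellation of $\overline{N(a)}$, and axiom (4) via the adaptation of \cite[33.8]{Skip2024} using the identities $Q_{a^\sharp}Q_a b=\overline{N(a)}^2 b$ and $(Q_ab)^\sharp=Q_{a^\sharp}b^\sharp$, with \cite[Proposition 12.24]{Skip2024} upgrading both identities from the invertibility locus to all of $J$. Your chain of equalities for axiom (4) is line-for-line the one displayed in the paper.
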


	\subsection{One generator hermitian cubic norm structures}

	We assume that we are working with a fixed hermitian cubic norm structure. We first identify what happens if we look at the hermitian cubic norm substructure generated by a single element, enabling the construction of a universal model. This model will play a crucial role in proving Lemma \ref{lem: universal model one invertible}, which underpins the subsequent results.
	
	We shall see that the hermitian cubic norm structure generated by $v$ over $K/R$ depends on $2$ elements in $R$ and one $K$, namely $T(v,v), T(v^\sharp,v^\sharp)$, which are hermitian and thus lie in $R$, and $N(v)$ which is just an element of $K$.
	
	\begin{lemma}
		\label{lem: def sharp}
		Suppose that $v \in J$. The $K$-submodule spanned by $v , v^\sharp$ and $v \times v^\sharp$ is closed under $\sharp$.
		Moreover, restricted to $\langle v, v^\sharp, v \times v^\sharp \rangle$ the map $\sharp$ is the unique $K$-anti-quadratic map $f$ such that
		\begin{enumerate}
			\item $f(v) = v^\sharp$,
			\item $f(v^\sharp) = N(v) v$,
			\item $f(v \times v^\sharp)= - \overline{N(v)} v \times v^\sharp + \overline{N(v)} T(v,v) v + T(v^\sharp,v^\sharp) v^\sharp$,
			\item $f^{(1,1)}(v, v^\sharp) = v \times v^\sharp$
			\item $f^{(1,1)}(v , v \times v^\sharp)  = \overline{N(v)} v + T(v,v) v^\sharp$,
			\item $f^{(1,1)}(v^\sharp, v \times v^\sharp) = N(v) v^\sharp + T(v^\sharp,v^\sharp) v$,
		\end{enumerate}
	i.e., $f(av + bv^\sharp + c v \times v^\sharp)$ equals
	\begin{equation}\bar{a}^2 f(v) + \overline{ab} f^{(1,1)}(v,v^\sharp) + \bar{b}^2 f(v^\sharp) + \overline{ac} f^{(1,1)}(v,v \times v^\sharp) + \overline{bc} f^{(1,1)}(v^\sharp,v \times v^\sharp) + \bar{c}^2 f(v \times v^\sharp).\end{equation}
	\end{lemma}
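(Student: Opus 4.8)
The statement splits into three assertions --- closure of $\langle v, v^\sharp, v\times v^\sharp\rangle$ under $\sharp$, the six identities (1)--(6), and the displayed expansion formula --- but the plan is to treat the six identities as the only real content. Indeed, the displayed formula is nothing but the general expansion of a $K$-anti-quadratic map $f$ applied to the sum $av + bv^\sharp + c\,v\times v^\sharp$: one uses that a quadratic-type map splits over a sum into its values on the summands plus the pairwise linearisations, together with $f(kx) = \bar k^2 f(x)$ and the $\bar\cdot$-antilinearity of $f^{(1,1)}$ in each slot. Granting (1)--(6), this formula shows at once that $\sharp$ sends every element of $\langle v, v^\sharp, v\times v^\sharp\rangle$ back into that submodule (closure), and that any $K$-anti-quadratic $f$ on the submodule satisfying (1)--(6) is computed from the same six values by the same formula, hence equals $\sharp$ there (uniqueness). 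So everything comes down to (1)--(6).

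Identities (1), (2) and (4) are immediate: (1) is the definition of $v^\sharp$, (2) is axiom \ref{ax2} evaluated at $v$, and (4) holds because $\times$ is, by definition, the linearisation $\sharp^{(1,1)}$. Identity (5) is axiom \ref{ax3} taken with $a = b = v$: it states $(v^\sharp \times v)\times v = \overline{N(v)}\,v + T(v,v)\,v^\sharp$, and $v^\sharp \times v = v\times v^\sharp$.

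For (3) and (6) the key is to linearise axiom \ref{ax2}, $(x^\sharp)^\sharp = N(x)x$, in a second variable: substituting $x + ty$ with a central parameter $t$ and reading off the coefficients of $t$ and $t^2$ (using also \ref{ax1} in the form $N^{(1,2)}(a,b) = T(a, b^\sharp)$ and the symmetry $N^{(2,1)}(a,b) = N^{(1,2)}(b,a)$) gives
\[ x^\sharp \times (x\times y) = N^{(2,1)}(x,y)\,x + N(x)\,y, \qquad x^\sharp \times y^\sharp + (x\times y)^\sharp = N^{(1,2)}(x,y)\,x + N^{(2,1)}(x,y)\,y . \]
Specialising both to $x = v$, $y = v^\sharp$ and simplifying with $(v^\sharp)^\sharp = N(v)v$ and the evaluations $N^{(2,1)}(v,v^\sharp) = N^{(1,2)}(v^\sharp, v) = T(v^\sharp, v^\sharp)$ and $N^{(1,2)}(v,v^\sharp) = T(v, (v^\sharp)^\sharp) = \overline{N(v)}\,T(v,v)$, the first relation becomes (6), and solving the second for $(v\times v^\sharp)^\sharp$ yields (3), including the leading term $-\overline{N(v)}\,v\times v^\sharp$.

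The one step that requires care is the conjugation bookkeeping: $\sharp$ is $K$-anti-quadratic and $\times$ is $\bar\cdot$-antilinear, whereas $N$ and its linearisations $N^{(i,j)}$ are $K$-multilinear without conjugation, so the bars have to be placed correctly --- notably in $v^\sharp \times (N(v)v) = \overline{N(v)}\,(v\times v^\sharp)$ and in $T(v, (v^\sharp)^\sharp) = \overline{N(v)}\,T(v,v)$ (conjugate-linearity of $T$ in its second argument, as forced by \ref{ax1}). Once these are accounted for, the coefficients line up exactly with (1)--(6) as stated, and the expansion formula then closes the argument.
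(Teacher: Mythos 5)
Your proposal is correct and follows essentially the same route as the paper: (1), (2), (4) are immediate, (5) comes from axiom (\ref{ax3}) with $a=b=v$, (6) from the $(3,1)$-linearisation of axiom (\ref{ax2}), and (3) from its $(2,2)$-linearisation, with the same specialisation $x=v$, $y=v^\sharp$. The extra care you take with the conjugation bookkeeping and with deducing closure and uniqueness from the expansion formula is consistent with (and slightly more explicit than) the paper's argument.
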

\begin{proof}
	Equations (1), (2), and (4) are obvious.
	Taking the $(3,1)$-linearisation of axiom (\ref{ax2}), yields
	\[ (a \times b) \times a^\sharp = N(a) b + T(b,a^\sharp) a.\]
	Setting $b = v^\sharp$, $a = v$, yields
	\[ (v \times v^\sharp) \times v^\sharp = N(v)v^\sharp + T(v^\sharp,v^\sharp) v \]
	which is precisely (6). Setting $a = v = b$ in axiom (\ref{ax3}) yields
	\[ (v \times v^\sharp) \times v = \overline{N(v)} v + T(v,v) v^\sharp,\]
	which is precisely (5).
	Lastly, taking the $(2,2)$-linearisation of axiom (\ref{ax2}) yields
	\[ a^\sharp \times b^\sharp + (a \times b)^\sharp = T(a,b^\sharp)a + T(b,a^\sharp)b.\]
	Setting $a = v, b = v^\sharp$ yields
	\[ \overline{N(v)} v \times v^\sharp + (v \times v^\sharp)^\sharp = \overline{N(v)} T(v,v) + T(v^\sharp,v^\sharp) v^\sharp,\]
	which implies (3).
\end{proof}

\begin{lemma}
	\label{lem: N on basis}
	For all $v$, we have $N(v \times v^\sharp) = N(v)(T(v,v) T(v^\sharp,v^\sharp) - N(v)\overline{N(v)})$.
	If $K$ is a domain, we have $N(v^\sharp) = \overline{N(v)}^2$ for all $v$.
	\begin{proof}
		Axiom (\ref{ax4}) applied to $a = v$ and $b = v^\sharp$ yields
		\[ N( T(v,v^\sharp) v - v^\sharp \times v^\sharp) = N(v)^2 \overline{N(v^\sharp)},\]
		simplifying the left-hand side yields
		\[ N( 3N(v) v - 2 N(v) v) = N(N(v)v) = N(v)^4.\]
		If $N(v) \neq 0$ and $K$ is a domain, we conclude that $N(v^\sharp) = \overline{N(v)}^2$.
		If $N(v) = 0$, we can play the same game with $v^\sharp$ to see that $N(v^\sharp)^2 = \overline{N(v)}^4 = 0$ so that $N(v^\sharp) = 0$.
		
		The other equation also follows from axiom (\ref{ax4}), applied with $a = b = v$, since it implies
		\[ N(T(v,v) v - v^\sharp \times v) = N(v)^2 \overline{N(v)}.\]
		We expand the left-hand side as
		\begin{align*}
			N(T(v,v) v - v^\sharp \times v) = & \;T(v,v)^3 N(v) - T(v,v)^2 T(v^\sharp \times v, v^\sharp) + T(v,v) T(v, (v \times v^\sharp)^\sharp)\\& - N(v \times v^\sharp) \\
			= & \;T(v,v)^3 N(v) - 2 T(v,v)^3 N(v) - N(v \times v^\sharp) \\
			& + T(v,v) T(v, - \overline{N(v)} v^\sharp \times v + \overline{N(v)} T(v,v) v + T(v^\sharp, v^\sharp) v^\sharp) \\
			= &  - T(v,v)^3 N(v) - N(v \times v^\sharp)
			 - 2 N(v) T(v,v)T(v^\sharp,v^\sharp)\\ &+ T(v,v)^3 N(v)   + 3 N(v) T(v^\sharp,v^\sharp) T(v,v) \\
			= & \; N(v) T(v,v) T(v^\sharp,v^\sharp) - N(v \times v^\sharp).\\
		\end{align*}
		Hence, we conclude 
		\[ N(v \times v^\sharp) = N(v)( T(v,v) T(v^\sharp,v^\sharp)- N(v)\overline{N(v)}). \qedhere \]
	\end{proof}
\end{lemma}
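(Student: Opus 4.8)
The plan is to extract both identities from axiom (\ref{ax4}) evaluated at two special pairs, simplifying with axiom (\ref{ax2}) and Lemma \ref{lem: def sharp}. For the equation $N(v^\sharp) = \overline{N(v)}^2$, I would apply axiom (\ref{ax4}) with $a = w$ and $b = w^\sharp$ for an arbitrary $w$: since $T(w, w^\sharp) = 3N(w)$ and, by axiom (\ref{ax2}), $w^\sharp \times w^\sharp = 2(w^\sharp)^\sharp = 2N(w)w$, the left-hand side collapses to $N\bigl((3N(w) - 2N(w))w\bigr) = N(N(w)w) = N(w)^4$, so $N(w)^4 = N(w)^2\,\overline{N(w^\sharp)}$ for every $w$. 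Now assume $K$ is a domain. If $N(v) \neq 0$, cancelling $N(v)^2$ and conjugating gives $N(v^\sharp) = \overline{N(v)}^2$; if $N(v) = 0$, then $(v^\sharp)^\sharp = N(v)v = 0$ by axiom (\ref{ax2}), so taking $w = v^\sharp$ yields $N(v^\sharp)^4 = N(v^\sharp)^2\,\overline{N((v^\sharp)^\sharp)} = 0$, whence $N(v^\sharp) = 0 = \overline{N(v)}^2$.

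For the formula for $N(v\times v^\sharp)$, I would instead apply axiom (\ref{ax4}) with $a = b = v$, which (using $v^\sharp\times v = v\times v^\sharp$) reads $N\bigl(T(v,v)v - v\times v^\sharp\bigr) = N(v)^2\overline{N(v)}$. The left side I expand as $N(p+q)$ with $p = T(v,v)v$ and $q = -(v\times v^\sharp)$, using that by axiom (\ref{ax1}) the linearisations of $N$ are $N^{(1,2)}(x,y) = T(x,y^\sharp)$ and $N^{(2,1)}(x,y) = T(y,x^\sharp)$; thus $N(p+q) = N(p) + T(q,p^\sharp) + T(p,q^\sharp) + N(q)$. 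Here $N(p) = T(v,v)^3 N(v)$, $N(q) = -N(v\times v^\sharp)$, $p^\sharp = T(v,v)^2 v^\sharp$, and $q^\sharp = (v\times v^\sharp)^\sharp$ is computed by Lemma \ref{lem: def sharp}(3). The two remaining cross terms are reduced using the full symmetry of the trilinear form $N^{(1,1,1)}$ (which gives $T(v\times v^\sharp, v^\sharp) = 2N(v)T(v,v)$ and $T(v, v\times v^\sharp) = T(v^\sharp, v\times v) = 2T(v^\sharp,v^\sharp)$), together with $T(v,v^\sharp) = 3N(v)$ and the (conjugate-)linearity of $T$ in each argument. Substituting Lemma \ref{lem: def sharp}(3) and collecting: the two $T(v,v)^3N(v)$ contributions cancel and the $N(v)T(v,v)T(v^\sharp,v^\sharp)$ terms combine, leaving $N(v)T(v,v)T(v^\sharp,v^\sharp) - N(v\times v^\sharp) = N(v)^2\overline{N(v)}$, which rearranges to the claim. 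No cancellation is used, so this part needs no integrality hypothesis on $K$.

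The genuinely delicate point is the bookkeeping in this last expansion: one must track which argument of $T$ is $K$-linear versus conjugate-linear and correctly reduce $T(v\times v^\sharp, v^\sharp)$ and $T\bigl(v,(v\times v^\sharp)^\sharp\bigr)$, in particular the contribution of the $-\overline{N(v)}\,(v\times v^\sharp)$ summand of $(v\times v^\sharp)^\sharp$ — this is exactly where Lemma \ref{lem: def sharp} and the symmetry of $N^{(1,1,1)}$ carry the argument. Everything else is direct substitution.
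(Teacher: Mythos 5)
Your proposal is correct and follows essentially the same route as the paper: axiom (\ref{ax4}) at $(a,b)=(v,v^\sharp)$ gives $N(w)^4=N(w)^2\overline{N(w^\sharp)}$ and hence $N(v^\sharp)=\overline{N(v)}^2$ over a domain (handling $N(v)=0$ via $v^\sharp$ exactly as the paper does), while axiom (\ref{ax4}) at $(a,b)=(v,v)$, expanded through the linearisations $N^{(1,2)}(p,q)=T(p,q^\sharp)$, $N^{(2,1)}(p,q)=T(q,p^\sharp)$ together with Lemma \ref{lem: def sharp}(3), yields the formula for $N(v\times v^\sharp)$. Your bookkeeping of the cross terms and of the (conjugate-)linearity of $T$ checks out against the paper's computation.
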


\begin{lemma}
The map $T$ on $\langle v ,v^\sharp, v \times v^\sharp \rangle$ is the unique hermitian map $h$ such that 
	\begin{enumerate}
		\item $h(v,v) = T(v,v)$,
		\item $h(v,v^\sharp) = 3 N(v)$,
		\item $h(v, v \times v^\sharp) = 2 T(v^\sharp,v^\sharp)$,
		\item $h(v^\sharp, v \times v^\sharp) = 2 \overline{N(v)} T(v,v)$,
		\item $h(v \times v^\sharp, v \times v^\sharp) = T(v,v) T(v^\sharp,v^\sharp) + 3 N(v)\overline{N(v)}$,
		\item $h(v^\sharp,v^\sharp) = T(v^\sharp,v^\sharp)$
	\end{enumerate}
Moreover, given fixed constants $T(v,v), T(v^\sharp,v^\sharp) \in R$ and $N(v) \in K$, there exists such a hermitian map $h$.
	\begin{proof}
		The only non-obvious equation is (5).
		This equation follows from \begin{align*}
			T(v \times v^\sharp, v \times v^\sharp) & = T(v, (v \times v^\sharp) \times v^\sharp) \\ & = T(v, N(v) v^\sharp + T(v^\sharp,v^\sharp) v) \\& = 3 N(v)\overline{N(v)} + T(v^\sharp,v^\sharp) T(v,v). \qedhere
		\end{align*}
	\end{proof}
\end{lemma}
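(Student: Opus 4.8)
The plan is to verify the six displayed values of $h := T$ restricted to $\langle v, v^\sharp, v\times v^\sharp\rangle$ one at a time directly from the axioms, and then to observe that both the uniqueness and the existence assertions are formal.

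Equations (1) and (6) are simply the definitions of $T(v,v)$ and $T(v^\sharp,v^\sharp)$, and (2) is the basic identity $T(v,v^\sharp) = 3N(v)$ recorded right after the definition. For (3) and (4) I would use the symmetry $T(a,b\times c) = T(b,a\times c)$ of the $(1,1,1)$-linearisation of $N$ together with $v\times v = 2v^\sharp$ and axiom (\ref{ax2}): thus $T(v,v\times v^\sharp) = T(v^\sharp,v\times v) = 2T(v^\sharp,v^\sharp)$ gives (3), while $T(v^\sharp,v\times v^\sharp) = T(v,v^\sharp\times v^\sharp) = T(v,2(v^\sharp)^\sharp) = T(v,2N(v)v) = 2\overline{N(v)}\,T(v,v)$ gives (4). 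Here one must remember that $T$ is conjugate-linear in its second argument (which is forced by axiom (\ref{ax1}) and the $K$-anti-quadraticity of $\sharp$), which is where the bar on $N(v)$ appears.

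The one equation with genuine content is (5), which I regard as the main step. Using the same symmetry, $T(v\times v^\sharp,v\times v^\sharp) = T(v,(v\times v^\sharp)\times v^\sharp)$; then I substitute the $(3,1)$-linearisation $(a\times b)\times a^\sharp = N(a)b + T(b,a^\sharp)a$ of axiom (\ref{ax2}) --- already extracted in the proof of Lemma \ref{lem: def sharp} --- with $a=v$, $b=v^\sharp$, to get $(v\times v^\sharp)\times v^\sharp = N(v)v^\sharp + T(v^\sharp,v^\sharp)v$. Applying $T(v,-)$ and using that $T(v^\sharp,v^\sharp)$ is hermitian, hence fixed by $\bar\cdot$, yields $3N(v)\overline{N(v)} + T(v,v)T(v^\sharp,v^\sharp)$, which is the claimed value.

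Finally, for uniqueness: a hermitian form on a $K$-module is determined by its values on pairs drawn from a generating set (the transposed pairs being conjugates), so any hermitian $h$ on $\langle v, v^\sharp, v\times v^\sharp\rangle$ satisfying (1)--(6) coincides with $T$ there. For the moreover-part I would take the free $K$-module on three symbols $e_1,e_2,e_3$ (standing for $v, v^\sharp, v\times v^\sharp$) and \emph{define} $h$ on pairs of these symbols by the right-hand sides of (1)--(6), with $T(v,v), T(v^\sharp,v^\sharp), N(v)$ replaced by the prescribed constants, then extend to a hermitian form; one checks that the three ``diagonal'' prescribed values lie in $R$ --- for (5) this uses $N(v)\overline{N(v)} \in R$ --- so this $h$ is well defined. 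The only real obstacle anywhere is the routine bookkeeping of conjugates caused by the $\bar\cdot$-antilinearity of $\times$ and the conjugate-linearity of $T$ in its second slot; no identity beyond axioms (\ref{ax1})--(\ref{ax2}) and the linearisations already on record is needed.
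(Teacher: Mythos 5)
Your proposal is correct and follows essentially the same route as the paper: the only substantive step is (5), which you compute exactly as the paper does, via the symmetry $T(a,b\times c)=T(b,a\times c)$ and the identity $(v\times v^\sharp)\times v^\sharp = N(v)v^\sharp + T(v^\sharp,v^\sharp)v$ from the $(3,1)$-linearisation of axiom (\ref{ax2}). The extra detail you supply for (3), (4), uniqueness, and the existence of $h$ just fills in what the paper dismisses as obvious, and it is all accurate.
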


Given the previous three lemmas, we can also uniquely reconstruct the cubic norm $N$ on the module $\langle v , v^\sharp , v \times v^\sharp \rangle$ starting from arbitrary values $T(v,v) ,T(v^\sharp,v^\sharp) \in R$ and $N(v) \in K$, assuming that $N(v^\sharp) = \overline{N(v)}^2$.
To be precise, it is given by

\begin{align}
	\label{def: N}
	N(av + bv^\sharp + c v \times v^\sharp) = & \; a^3 N(v) + b^3 \overline{N(v)}^2 + a^2b T(v^\sharp,v^\sharp) + ab^2 \overline{N(v)} T(v,v) \\ & + c^3 N(v)(T(v,v)T(v^\sharp,v^\sharp) - N(v)\overline{N(v)}) \nonumber \\ & + c^2 T(av + b v^\sharp, (v \times v^\sharp)^\sharp) + c T(v \times v^\sharp, (av + b v^\sharp)^\sharp) \nonumber.
\end{align}

So, now we are ready to define the ``universal" hermitian cubic norm structure.
Set $R = \mathbb{Z}[x,y,\alpha, N_1, N_2, (1 - 4 \alpha)^{-1}]$ and $K = R[t]/(t^2 - t + \alpha)$.
Let $U$ be the hermitian cubic norm structure over $K/R$, formed by the free module of rank $3$ generated by $v, v^\sharp, v \times v^\sharp$.
Use the previous lemmas and formula for $N$ to define the unique $\sharp, T, N$ such that
\[ T(v,v) = x, \quad T(v^\sharp,v^\sharp) = y, \quad N(v) = N_1t + N_2(1 - t).\]

\begin{remark}
	In Appendix \ref{app A}, we explain how we implemented the hermitian cubic norm structure on a computer.
	The precise formulas for $T$,  $\sharp$ and $N$ are restated there.
	Results involving this model are explained in some detail over there.
\end{remark}

\begin{definition}
	Consider two commutative unital rings $R$ and $R'$, $K = R[t]/(t^2 - t + \alpha)$, a ring homomorphism $f : R \longrightarrow R'$, $K' \cong R'[t]/(t^2 - t + f(\alpha))$, a hermitian cubic norm structure $J$ over $K/R$, and a hermitian cubic norm structure $J'$ over $K'/R'$.
	We remark that $J'$ is a $K$-module, using the homomorphism $K \longrightarrow K'$ induced by $f$.
	Each $K$-module homomorphism $g : J \longrightarrow J'$ that preserves $N, \sharp$, and $T$, is called a \textit{hermitian cubic norm structure homomorphism}. 	
\end{definition}

\begin{proposition}
	\label{prop: universal model}
	Consider $R = \mathbb{Z}[x,y,\alpha, N_1, N_2, (1 - 4 \alpha)^{-1}]$ and $K = R[t]/(t^2 - t + \alpha)$. Define $U$ as the free $K$-module generated by $v, v^\sharp$ and $v \times v^\sharp$. There exists a unique hermitian cubic norm structure on $U$ such that $T(v,v) = x, T(v^\sharp,v^\sharp) = y$ and $N(v) = N_1 t + N_2(1 - t)$. Moreover, for any hermitian cubic norm structure $J$ over a domain $K' = R'[t]/(t^2 - t + \beta)$ and any $w \in J$, there exists a hermitian cubic norm structure homomorphism $U \longrightarrow J$; the assumption that $K'$ is a domain can be dropped if one requires that $N(w^\sharp) = \overline{N(w)}^2$.
	
	\begin{proof}
		This is Lemma \ref{lem: app2}, if one observes that the previous $3$ lemmas prove the necessity of the formulas to define $N, \sharp$ and $T$. The moreover-part follows similarly from the necessity of the formulas employed to define $N, \sharp$ and $T$.
	\end{proof}
\end{proposition}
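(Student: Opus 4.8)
The plan is to split the statement into the existence-and-uniqueness part for $U$ and the universal-property part. The uniqueness part is essentially already done: Lemma~\ref{lem: def sharp}, Lemma~\ref{lem: N on basis}, the following lemma describing $T$ on $\langle v,v^\sharp,v\times v^\sharp\rangle$, together with formula~\eqref{def: N} (which is legitimate because $K$ is a domain, so that $N(v^\sharp)=\overline{N(v)}^2$ holds automatically by Lemma~\ref{lem: N on basis}), show that once $T(v,v),T(v^\sharp,v^\sharp)\in R$ and $N(v)\in K$ are prescribed, the maps $\sharp$, $T$ and $N$ on the free rank-$3$ module $\langle v,v^\sharp,v\times v^\sharp\rangle$ are forced to equal the explicitly displayed expressions. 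So for existence it only remains to verify that these explicit $\sharp$, $T$, $N$ satisfy axioms (\ref{ax1})--(\ref{ax4}); the type conditions ($\sharp$ $K$-anti-quadratic, $T$ hermitian, $N$ $K$-cubic) hold by construction, and each of the four axioms is an identity of polynomial laws, hence amounts to a single polynomial identity in the coordinates $a,b,c$ of a generic element and the parameters $x,y,\alpha,N_1,N_2$.

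To minimise what must be checked directly, I would first establish only (\ref{ax1}) and (\ref{ax2}) for the explicit formulas, and then recover (\ref{ax3}) and (\ref{ax4}) from Lemma~\ref{lem: redundant axs}. That lemma does not apply to $U$ itself, since $N(v)=N_1t+N_2(1-t)$ is not a unit in $K$, so I would pass to $R'=R[\delta^{-1}]$ with $\delta=N(v)\overline{N(v)}=\alpha(N_1-N_2)^2+N_1N_2\in R$: over $R'$ the generator $v$ has $N(v)$ invertible in $K'=R'\otimes_R K$ and the $K'$-linear functional ``coefficient of $v$'' takes the value $1$ at $v$, so Lemma~\ref{lem: redundant axs} yields (\ref{ax3}) and (\ref{ax4}) over $R'\otimes_R U$ from (\ref{ax1}) and (\ref{ax2}). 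Since $R$ is a domain, $R\hookrightarrow R'$, and the coefficients of all the polynomial laws in play lie in $R$, so (\ref{ax3}) and (\ref{ax4}) descend back to $U$. This leaves the verification of (\ref{ax1}) and (\ref{ax2}) for the explicit $\sharp,T,N$: a bounded but long symbolic identity check that I would run on a computer. This is the content of Lemma~\ref{lem: app2} in Appendix~\ref{app A}, and I expect it to be the one genuine obstacle, being the only step that does not follow formally from the lemmas already proved.

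For the universal property, given a hermitian cubic norm structure $J$ over $K'=R'[t]/(t^2-t+\beta)$ and $w\in J$, I would first build $\phi:R\longrightarrow R'$: writing $N(w)=N_1't+N_2'(1-t)$ (possible since $\{t,1-t\}$ is an $R'$-basis of $K'$), set $x\mapsto T(w,w)$, $y\mapsto T(w^\sharp,w^\sharp)$, $\alpha\mapsto\beta$, $N_1\mapsto N_1'$, $N_2\mapsto N_2'$; this is well defined because $1-4\beta\in(R')^\times$, and the induced $K\longrightarrow K'$ sends $N(v)$ to $N(w)$. Viewing $J$ as a $K$-module via $\phi$ and using that $U$ is free on $v,v^\sharp,v\times v^\sharp$, let $g:U\longrightarrow J$ be the $K$-linear map with $g(v)=w$, $g(v^\sharp)=w^\sharp$, $g(v\times v^\sharp)=w\times w^\sharp$; its image lies in the $\sharp$-closed submodule $\langle w,w^\sharp,w\times w^\sharp\rangle$ by Lemma~\ref{lem: def sharp}. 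To see that $g$ preserves $\sharp$, note that Lemma~\ref{lem: def sharp} applied inside $J$ to $w$ computes $\sharp$ on that submodule by the same formula as on $U$ with $v$ replaced by $w$, and its structure constants $T(w,w),T(w^\sharp,w^\sharp),N(w)$ are the $\phi$-images of $x,y,N(v)$; since $g$ is $K$-linear and matches the three generators, $g\circ\sharp_U=\sharp_J\circ g$ follows term by term. The same argument, using the lemma for $T$ and then formula~\eqref{def: N}, gives $T_J(g(-),g(-))=\phi\circ T_U(-,-)$ and $N_J\circ g=\phi\circ N_U$; the last identity additionally requires $N(w^\sharp)=\overline{N(w)}^2$ in $J$, which is automatic when $K'$ is a domain by Lemma~\ref{lem: N on basis} and is exactly the extra hypothesis imposed in the general case. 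Hence $g$ is a hermitian cubic norm structure homomorphism, which completes the plan.
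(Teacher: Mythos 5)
Your proposal is correct and follows essentially the same route as the paper: uniqueness is forced by Lemmas \ref{lem: def sharp}, \ref{lem: N on basis} and the lemma on $T$ together with formula \eqref{def: N}, existence reduces to the computer verification of axioms (\ref{ax1})--(\ref{ax2}) in Lemma \ref{lem: app2} with (\ref{ax3})--(\ref{ax4}) recovered from Lemma \ref{lem: redundant axs}, and the universal property follows from the same forced formulas. Your only deviation is cosmetic: you make $N(v)$ invertible by localizing at $N(v)\overline{N(v)}$ and descending along $R\hookrightarrow R'$, where the appendix instead embeds $K/R$ into $\mathbb{C}/\mathbb{R}$ with transcendental parameters; both are valid ways to put Lemma \ref{lem: redundant axs} in force.
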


\section{The associated structurable and Lie algebras}

In this section, we recall the construction of the structurable and Lie algebras associated to a hermitian cubic norm structure.
We then introduce the notion of being a division structurable algebra and apply the one generator hermitian cubic norm structure to study them.

\subsection{Basic constructions}

We recall some constructions developed in \cite{Michiel2025} related to hermitian cubic norm structures.
\begin{construction}
	With a hermitian cubic norm structure $J$ over $K/R$, we can associate an algebra $A = K \oplus J$ with multiplication
	\[ (k_1,j_1)(k_2,j_2) = (k_1k_2 + T(j_1,j_2), k_1j_2 + \bar{k}_2 j_1 + j_1 \times j_2)\]
	and involution
	\[ \overline{(a,b)} = (\bar{a},b).\]
	We call this algebra the \textit{associated structurable algebra}.
	
	On this algebra, we can define \[V : A \times A \times A  \longrightarrow A: (a,b,c) \mapsto - (a\bar{b}) c - (c \bar{b})a + (c \bar{a}) b\] and we shall use $V_{a,b} c$ to denote $V(a,b,c)$.
	The $R$-linear span \[\mathfrak{instr}(J) = \langle (V_{a,b}, - V_{b,a}) | a, b \in A \rangle_R \subset \End(A) \times \End(A)\] is the \textit{inner structure algebra}, if one uses the operation
	\[ [f,g] = f \circ g - g \circ f.\] 
	The inner structure algebra is closed under this composition since
	\[ [V_{a,b}, V_{c,d}] = V_{V_{a,b} c, d} - V_{c, V_{b,a} d}.\]
\end{construction}

\begin{remark}
	One usually defines $V_{a,b} c$ as the minus our $V_{a,b} c$. In \cite{Michiel2025}, we also use the opposite sign convention (but a slightly different version of the Lie algebra construction that immediately resolves this discrepancy). The description of the Lie algebra we use here, is more in line with \cite{All99}.
\end{remark}

\begin{construction}
	Using the associated structurable algebra $A$ and inner structure algebra $I$ for a hermitian cubic norm structure over $R[t]/(t^2 - t + \alpha)$, we can define a $\mathbb{Z}$-graded Lie algebra $L = \bigoplus_{i \in \mathbb{Z}} L_i$ with $L_i = 0$ if $|i| > 2$ and
	\begin{itemize}
		\item $L_2 \cong L_{-2} = R(t - \bar{t})$,
		\item $L_1 \cong L_{-1} = A$,
		\item $L_0 = I$.
	\end{itemize}
To distinguish the distinct copies of $A$ and $R(t - \bar{t})$ we use $A_1, A_{-1}$ and $R(t - \bar{t})_2, R(t - \bar{t})_{-2}$.
The operation is the unique one such that
\begin{itemize}
	\item $I$ is a subalgebra
	\item $(f, g) \in I$ acts on $(x,y) \in A_{1} \times A_{-1}$ as $[(f,g),(x,y)] = (fx, gy)$.
	\item $[(x_1,y_1),(x_2,y_2)] = (x_1\bar{x_2} - x_2 \bar{x_1})_2 + (V_{x_1,y_2} - V_{x_2,y_1}, - V_{y_2,x_1} + V_{y_1,x_2}) + (y_1 \bar{y_2} - y_2 \bar{y_1})_{-2}$.
\end{itemize}
This implies that $[y_{-1},s_2] = (sy)_1$, $[(V_{x,y}, - V_{y,x}), s_2] = [x_1, (sy)_1]$, and $[s_{2}, [s'_{-2}, x_1]] = (s(s'x))_1$ for $s, s' \in R(t - \bar{t})_{\pm 2}$ and $(x_1,y_{-1}) \in A_1 \times A_{-1}$.
\end{construction}

\begin{construction}
We also have specific automorphisms of the Lie algebra.
Set \[G = \{ ((a,v), (u, av + v^\sharp)) \in (K \times J)^2 | u + \bar{u} = a\bar{a} + T(v,v)\}.\]
There are an actions $\exp_\pm$ of $G$ on $L$ such that
\[ \exp_+(x,y) = 1 + (x,y)_1 + (x,y)_2 + (x,y)_3 + (x,y)_4\]
for each $(x,y) \in A^2 \cap G$ with $(x,y)_i$ acting as $+i$ on the grading and $(x,y)_1 = \ad x$.
Moreover, these automorphisms are uniquely determined by
\[ (x,y)_2 c_{-1} = ((x \bar{c}) x - y c)_{1} = (Q_{(x,y)} c)_{1}\]
and
\[ (x,y)_3 c_{-1} = (T_{(x,y)} c)_{2}\]
for a specific function $T_{(x,y)}$, since $L$ is generated by $A_{1}$ and $A_{-1}$. For a formula of $T$, see \cite[Theorem 4.2.1]{Michiel2025}.
The action $\exp_-$ is ``the same action" with the grading reversed.

Moreover, if we look at $\langle \exp_+(G), \exp_-(G) \rangle \subset \Aut(L)$ we can recover $\exp_+(G)$ as the elements $e$ of the form
\[ 1 + \sum_{i = 1}^4 e_i\]
with $e_i$ acting as $+i$ on the grading. We call the group $\langle \exp_+(G), \exp_-(G) \rangle$ \textit{the projective elementary group}. We also use $G^+$ to denote $\exp_+(G)$ and $G^-$ for $\exp_-(G)$. To denote the subgroup of $\langle \exp_+(G), \exp_-(G) \rangle$ of elements that preserve the grading, we use $\Inn(G^+,G^-)$.
\end{construction}

\begin{lemma}
	\label{lem: def nu}
	For $(x,y) = ((a,v),(u, av + v^\sharp)) \in G$, we have \[\exp_+(x,y) (s)_{-2} = \nu(u,a,v) s_{2} \mod L_{< 2}.\]
	This scalar $\nu(u,a,v) \in R$ is given by
	\[ u \bar{u} - a\bar{a} T(v,v) + a N(v) + \bar{a} \overline{N(v)} - T(v^\sharp,v^\sharp).\]
	\begin{proof}
		This is a lengthy computation, feasible by hand.
		First, we can extend scalars with a faithfully flat $R'$, in such a way that $R'[u]/(u^2 - u + \alpha) \cong R'[t]/(t^2 - t)$, which can be done by \cite[Theorem 4.1.7]{Michiel2025}.
		
		Using that $\exp_+(x,y)$ is an automorphism, we obtain
		\[\exp_+(x,y) [t, \bar{t}] = [(T_{(x,y)} t),[x,\bar{t}]] + [[x,t],(T_{x,y} \bar{t})]  + [Q_{(x,y)} t, Q_{(x,y)} \bar{t}] \mod L_{<2}.\]
		
		One computes that
		\begin{align*}
			[(T_{(x,y)} t),[x,\bar{t}]] + [[x,t],(T_{x,y} \bar{t})] = (t - \bar{t})(a\bar{a}(a\bar{a} - T(v,v)) + a N(v) + \bar{a} \overline{N(v)}).\end{align*}
		One can also show that
		\begin{align*}
			[Q_{(x,y)} t, Q_{(x,y)} \bar{t}] &= (t - \bar{t})( - (a\bar{a})^2 + u \bar{u} - T(v^\sharp,v^\sharp)).
		\end{align*}
	Combining those two yields
	\[ \nu(u,a,v) = u \bar{u} - a\bar{a} T(v,v) + aN(v) + \bar{a} \bar{N(v)} - T(v^\sharp,v^\sharp). \qedhere\]
	\end{proof}
\end{lemma}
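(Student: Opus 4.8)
The plan is to use that $\exp_+(x,y)$ is an $R$-linear automorphism of the graded Lie algebra $L$, to realise a generator of $L_{-2}$ as a bracket of two elements of $L_{-1}$ on which $\exp_+(x,y)$ acts explicitly, and then to read off the $L_2$-component. Write $s = t - \bar t$, so $\bar s = -s$ and $s^2 = 1 - 4\alpha \in R^\times$; then $L_{-2} = R s_{-2}$ and the bracket relation $[c_{-1},d_{-1}] = (c\bar d - d\bar c)_{-2}$ gives $[t_{-1}, \bar t_{-1}] = (t^2 - \bar t^2)_{-2} = s_{-2}$. By $R$-linearity it suffices to treat $s_{-2} = [t_{-1}, \bar t_{-1}]$, and then $\exp_+(x,y) s_{-2} = [\exp_+(x,y) t_{-1},\, \exp_+(x,y)\bar t_{-1}]$. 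Since $L_i = 0$ for $i > 2$, the class of the right-hand side modulo $L_{<2}$ lies in $L_2 = R s_2$, hence equals $\nu(u,a,v) s_2$ for a unique $\nu(u,a,v)\in R$; the content of the lemma is the explicit value of this scalar.

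To compute it I would first, following \cite[Theorem 4.1.7]{Michiel2025}, pass to a faithfully flat extension $R'$ of $R$ over which $K$ splits — harmless, since the identity to be proved lives in $R \hookrightarrow R'$ — which streamlines the bookkeeping and the explicit formulas for $Q_{(x,y)}$ and $T_{(x,y)}$. Using $L_3 = 0$ to kill the degree $+3$ contribution, expand for $c \in A$
\[ \exp_+(x,y)\,c_{-1} = c_{-1} + [x_1, c_{-1}] + (Q_{(x,y)}c)_1 + (T_{(x,y)}c)_2, \]
substitute $c = t$ and $c = \bar t$, and collect the degree $2$ part of the bracket. Modulo $L_{<2}$ (and using $L_{>2} = 0$) only three summands survive:
\[ [(T_{(x,y)}t)_2,\,[x_1,\bar t_{-1}]] \;+\; [[x_1, t_{-1}],\,(T_{(x,y)}\bar t)_2] \;+\; [(Q_{(x,y)}t)_1,\,(Q_{(x,y)}\bar t)_1]. \]

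The final and, I expect, most laborious step is the explicit evaluation of these three brackets; this is where the real computation lies. For the two $T_{(x,y)}$-terms one substitutes the formula for $T_{(x,y)}$ from \cite[Theorem 4.2.1]{Michiel2025}, rewrites $[x_1,\bar t_{-1}]$ and $[x_1,t_{-1}]$ as elements of $\mathfrak{instr}(J)$, and uses the relations $[y_{-1}, s_2] = (sy)_1$ and $[(V_{x,y},-V_{y,x}),s_2] = [x_1,(sy)_1]$ together with the hermitian cubic norm structure axioms; this should produce $(t-\bar t)\big(a\bar a(a\bar a - T(v,v)) + aN(v) + \bar a\overline{N(v)}\big)$. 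For the third bracket one uses $Q_{(x,y)}c = (x\bar c)x - yc$ for $c \in \{t,\bar t\}$, expands via $[c_1,d_1] = (c\bar d - d\bar c)_2$, retains only the $L_2$-part, and invokes the defining relation $u + \bar u = a\bar a + T(v,v)$ of $G$; this should produce $(t-\bar t)\big({-}(a\bar a)^2 + u\bar u - T(v^\sharp,v^\sharp)\big)$. Summing, the $(a\bar a)^2$ terms cancel and the coefficient of $t - \bar t$ collapses to $u\bar u - a\bar a T(v,v) + aN(v) + \bar a\overline{N(v)} - T(v^\sharp,v^\sharp)$, which is the asserted $\nu(u,a,v)$ (and is visibly $\bar\cdot$-invariant, consistent with lying in $R$). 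Discarding $L_{<2}$-contributions as early as possible is what keeps this otherwise lengthy computation feasible by hand.
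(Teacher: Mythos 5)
Your proposal is correct and follows essentially the same route as the paper: split $K$ by a faithfully flat extension, write $s_{-2}=[t_{-1},\bar t_{-1}]$, apply $\exp_+(x,y)$ as an automorphism, isolate the three summands of total degree $2$, and evaluate them to the same two intermediate expressions before summing. The only parts you leave unexecuted are the two explicit bracket evaluations, which the paper likewise states without detail as ``a lengthy computation, feasible by hand.''
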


\begin{definition}
	The natural transformation $\nu : G^+ \longrightarrow R$ of the previous lemma will be called the \textit{quartic norm}. We will use the following notation and definition:
	\[ \nu((a,v),(u,av + v^\sharp)) = \nu(u,a,v) = u \bar{u} - a\bar{a} T(v,v) + a N(v) + \bar{a} \overline{N(v)} - T(v^\sharp,v^\sharp).\]
	We call a hermitian cubic norm structure for which the quartic norm is invertible for all $g \in G^+(R)$ \textit{division}. This immediately implies that $R$ is a field, since $\nu(r(t - \bar{t}),0,0) = r^2(1 - 4 \alpha)$ is invertible for all $r \in R \setminus \{ 0\}$.
	
	We will call a hermitian cubic norm structure anisotropic to indicate that the cubic norm is anisotropic. We use division instead when the quartic norm is anisotropic, as it is related to the notion of a ``division structurable algebra".
\end{definition}

\begin{remark}
	A priori, the cubic and quartic norm do not tell that much about each other. On one hand, one can have division hermitian cubic norm structures for which the cubic norm is identically zero (these will appear when we classify division hermitian cubic norm structures). On the other hand, one can have anisotropic hermitian cubic norm structures which are not division, such as $K$ over $K/R$ with $N(x) = x^3$ and $x^\sharp = \bar{x}^2$, for which
	\[ \nu((\lambda, - \lambda),(2 \lambda^2, 2 \lambda^2)) = 4 \lambda^4 - \lambda^4 - \lambda^4 - \lambda^4 - \lambda^4 = 0\]
	for all $\lambda \in R$.
\end{remark}

\begin{remark}
	If one fills in $u = a\bar{a}/2 + T(v,v)/2 + \lambda^2 (t - \bar{t})$ in $\nu(u,a,v)$, one obtains
	\[ \nu(u,a,v) = 1/4((a\bar{a} - T(v,v))^2 + 4aN(v) + 4\bar{a}\overline{N(v)} - 4T(v^\sharp,v^\sharp)) + \lambda^2(1 - 4 \alpha),\]
	the quartic norm one typically associates to a skew dimension one structurable algebra with an additional term $\lambda^2(1 - 4 \alpha)$.
	It is known over fields of characteristic different from $2$ and $3$ that $\nu( a\bar{a}/2 + T(v,v)/2,a,v) \neq 0$ if and only if the element $(a,v) \in A$ is \textit{conjugate invertible}, see, e.g., \cite{Tom2019, Allison84}. To allow the additional degree of freedom $\lambda$, one has to combine this knowledge with \cite{Tom}. Below, we shall prove that $\nu(u,a,v)$ being invertible is necessary and sufficient to be \textit{one-invertible}, which is the generalization employed in \cite{Tom}.
	
	We remark that $\Inn(G^+,G^-) \subset \langle G^+ ,G^- \rangle $ formed by the elements that preserve the grading, will be a subgroup of the similitudes of the norm $\nu$. In the case that our hermitian cubic norm structure is related to an Albert algebra, one can relate this to Freudenthals construction of $E_7$ \cite{Freu54}, as explained in an easily recognizable manner in \cite[Proposition 9.18]{Jacobson71}. This quartic norm plays an important role when considering groups and Lie algebras of type $E_7$, examples except the ones already provided include \cite{Brown69, Skip2001, Alsaody21, Skip2023, Alsaody24}. It might thus be interesting to ask to what degree $\Inn(G^+,G^-)$ is determined by the group off similitudes of $\nu$ on $G$, first in the $E_7$ (hermitian cubic norm structure coming from an Albert algebra) case and thereafter in general. In \cite[Theorem 8.8]{Muhlherr19}, this group of similitudes of $\nu(u,a,v)$ was determined (over fields of arbitrary characteristic) for split hermitian cubic norm pairs coming from cubic norm structures; in this case $\Inn(G^+,G^-)$ coincides with the group of similitudes if and only if the inner structure group and the structure group for the cubic norm structure coincide.
	
	We remark that this quartic norm and the relation to $E_7$ inspired the axiomatization of Freudenthal triple systems \cite{Meyberg68} and \cite[section 4]{Brown69}.
	To classify the possible Freudenthal triple systems, Meyberg proved that, assuming the existence of a certain idempotent, one can construct the triple system from a Jordan algebra of degree $\le 3$.
	Later, when structurable algebras were defined, one could see skew dimension one structurable algebras and Freudenthal triple systems as being basically equivalent, see e.g. \cite[Theorem 3.17 and Theorem 5.2]{Boel13}
\end{remark}

\subsection{A necessary and sufficient criterion for being a division hermitian cubic norm structure}

The goal of this subsection is to prove that if $R$ is arbitrary and if the quartic norm $\nu : G\setminus\{0\} \longrightarrow R^\times$ takes nonzero values, that each $g \in G$ is \textit{one-invertible}, i.e., there exist $g_r$ and $g_l \in G$ such that
\begin{equation} \exp_-(g_l) \exp_+(g) \exp_-(g_r) \label{eq: oneinvert} \end{equation}
reverses the grading of the Lie algebra. We will also use this to establish that $K/R$ is a quadratic field extension in that case.

\begin{lemma}
	\label{lem: universal model one invertible}
	Let $U$ be the hermitian cubic norm structure of Proposition \ref{prop: universal model} over $\mathbb{Z}[x,y,\alpha, N_1, N_2, (1 - 4 \alpha)^{-1}][t]/(t^2 - t + \alpha)$ and extend the scalars with $\mathbb{Z}[z_1, z_2, z_3]$. 
	
	We set $a = z_1 t + z_2 (1 - t)$ and $u = (z_1z_2 + T(v,v) + z_3)t - z_3 \bar{t}$ and note that $g = ((a,v), (u, av + v^\sharp)) \in G^+$.
	
	If we add $\nu(u,a,v)^{-1}$ to the ring of scalars, $g$ is one-invertible.
	\begin{proof}
		We prove this lemma in the appendix, cfr., Lemma \ref{lem: app main}.
		In this lemma, we determine that $g_r = (\alpha, \beta)$ is determined by
		\begin{equation}\nu(u,a,v) \alpha  = (- u a + a^2 \bar{a} + \overline{{N(v)}}, (- u + T(v,v))v + \bar{a} v^\sharp - v \times v^\sharp).\label{def gr} \end{equation}
		and
		\begin{equation}
			\nu(u,a,v)^2 \beta = (\gamma,w)
		\end{equation}
			with $w = \nu(u,a,v)^2(\alpha_1 \alpha_2 + \alpha_2^\sharp)$ and 
		\[ \gamma = \nu(u,a,v) u + 2 N(v) \overline{N(v)} + 2 a\bar{a} T(v^\sharp,v^\sharp) - 2 (u aN(v) + \bar{u} \bar{a} \overline{N(v)}). \qedhere \]
	\end{proof}
\end{lemma}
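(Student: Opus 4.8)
By definition $g$ is one-invertible as soon as we can exhibit $g_l,g_r\in G$ for which the automorphism $w:=\exp_-(g_l)\exp_+(g)\exp_-(g_r)$ of $L$ reverses the $\mathbb{Z}$-grading. Since $L$ is generated by $A_1$ and $A_{-1}$, it suffices to check that $w(A_1)\subseteq L_{-1}$ and $w(A_{-1})\subseteq L_1$: then, using $L_{\pm2}=[A_{\pm1},A_{\pm1}]$ and $L_0=[A_1,A_{-1}]$, one gets $w(L_i)\subseteq L_{-i}$ for all $i$, and since $w$ is bijective, comparing homogeneous components of preimages forces each of these inclusions to be an equality. So the plan is: write down explicit candidates $g_l,g_r$ in the model of Proposition~\ref{prop: universal model}, check that they lie in $G$, and check $w(A_{\pm1})\subseteq L_{\mp1}$. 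Observe that after adjoining $\nu(u,a,v)^{-1}$ the base ring is the localization of $\mathbb{Z}[x,y,\alpha,N_1,N_2,z_1,z_2,z_3]$ at $1-4\alpha$ and $\nu(u,a,v)$, hence an integral domain, and $U$ is free of rank $3$ over the associated quadratic extension; so every identity that needs checking amounts to a finite list of polynomial identities over that ring, verifiable symbolically using the implementation described in Appendix~\ref{app A}.

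To locate $g_r$ I would follow the rank-one template. For $\mathrm{SL}_2$, conjugating a strictly upper triangular unipotent with parameter $c$ by the strictly lower triangular unipotent with parameter $-c^{-1}$ on both sides produces the standard antidiagonal Weyl element; so $g_r$ should be a ``one-inverse'' of $g$, with first component $\nu(u,a,v)^{-1}$ times a suitable polynomial expression (a ``conjugate adjugate'') of $g$. Lemma~\ref{lem: def nu} makes this precise at the top of the grading: $\exp_+(g)$ already sends $(t-\bar t)_{-2}$ to $\nu(u,a,v)\,(t-\bar t)_2$ modulo $L_{<2}$, and since $\nu(u,a,v)$ is a unit one can choose $g_r=(g_{r,1},g_{r,2})\in G$ precisely so that $\exp_+(g)\exp_-(g_r)$ carries $L_{-2}$ \emph{onto} $L_2$, i.e.\ so that the lower-order corrections cancel. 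The leading coefficients being units, the resulting equations for $g_{r,1}$ are effectively linear and force
\[
\nu(u,a,v)\,g_{r,1}=\bigl(-ua+a^2\bar a+\overline{N(v)},\ (-u+T(v,v))v+\bar a\,v^\sharp-v\times v^\sharp\bigr),
\]
after which membership of $g_r$ in $G$ pins $g_{r,2}$ down up to an element of $R(t-\bar t)$, which I normalize by the formula in the statement. Having arranged $\exp_+(g)\exp_-(g_r)(L_{-2})=L_2$, I then compute the images of $A_1$ and $A_{-1}$ under $\exp_+(g)\exp_-(g_r)$ and pick $g_l\in G$ by the same procedure, so that $\exp_-(g_l)$ absorbs the remaining ``upper triangular'' leakage and places $w(A_1)$ inside $L_{-1}$; one then verifies that the resulting $w$ reverses the grading.

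The only genuinely non-routine inputs are the ansatz for $g_r$ and $g_l$ — dictated by the $\mathrm{SL}_2$ picture together with Lemma~\ref{lem: def nu} — and the verification that these candidates actually satisfy the defining relation $u+\bar u=a\bar a+T(v,v)$ of $G$. The main obstacle is simply the size of the computation: the formulas for $\exp_\pm$ involve the degree-$2$, $3$ and $4$ pieces $Q_{(x,y)}$ and $T_{(x,y)}$ of \cite[Theorem~4.2.1]{Michiel2025}, so composing three such automorphisms and imposing grading-reversal produces polynomial identities in $x,y,\alpha,N_1,N_2,z_1,z_2,z_3$ that are far too large to handle by hand. This is exactly why the universal model of Proposition~\ref{prop: universal model} is implemented on a computer: the claim reduces to checking that the off-antidiagonal blocks of $w$ vanish, which is carried out in Lemma~\ref{lem: app main} and Appendix~\ref{app A}.
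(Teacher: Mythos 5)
Your overall strategy coincides with the paper's at the top level: exhibit the explicit candidate $g_r$ (your formula matches Equation~\eqref{def gr}), reduce everything to polynomial identities in the universal one-generator model over a domain, and discharge those identities by computer. Where you diverge is in the verification mechanism. You propose to compose the three automorphisms $\exp_-(g_l)\exp_+(g)\exp_-(g_r)$ explicitly and check that the images of $A_{\pm1}$ land in $L_{\mp1}$ (which does suffice, since $L$ is generated by $A_1$ and $A_{-1}$ and $w$ is an automorphism). The paper instead never composes the exponentials: Lemma~\ref{lem: app main} verifies a short list of algebraic identities (Lemma~\ref{lem: app eqs}) and feeds them into the one-invertibility criterion of \cite[Theorem 13]{All99}, with $g_l=(g^{-1})_r$ supplied for free by Lemma~\ref{lem: gr oneinv}. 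Your route is more self-contained but computationally much heavier; the paper's route buys a much smaller symbolic computation at the price of importing an external theorem. Note also that your description of what the appendix actually computes (``the off-antidiagonal blocks of $w$ vanish'') is not what Lemma~\ref{lem: app main} does.

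There is one step in your derivation that fails as written. You claim $g_r$ can be chosen so that $\exp_+(g)\exp_-(g_r)$ already carries $L_{-2}$ onto $L_2$. No such $g_r$ exists: the components of $\exp_-(g_r)$ lower the grading, so $\exp_-(g_r)$ fixes $L_{-2}$ pointwise, and $\exp_+(g)\,s_{-2}=s_{-2}+[g_1,s_{-2}]+\dots+\nu(g)s_2$ has a nonzero $L_{-1}$ component whenever $g\neq 1$. The cancellation of the lower-degree terms only happens after the third factor is applied; the two-factor normalization you describe (in the correct order, i.e.\ $\exp_-(h)\exp_+(g)$ acting on $L_{-2}$) pins down $g_l$, not $g_r$ --- compare the $\mathrm{SL}_2$ identity $u^-(-c^{-1})u^+(c)$ versus $u^+(c)u^-(-c^{-1})$. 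So your procedure does not actually determine $g_r$, and you never write down $g_l$ at all. Both gaps are repairable (take $g_l=(g^{-1})_r$ and determine $g_r$ either from the left-hand normalization applied to $g^{-1}$ or, as the paper does, from the $\widehat{\phantom{x}}$-operation of \cite[Theorem 13]{All99}), but as it stands the ansatz that is supposed to produce your explicit candidates is broken, and the explicit candidates are the only non-routine content of the lemma.
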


	\begin{proposition}
		\label{prop: g one inv iff nu inv with ass}
		Suppose that $J$ is an arbitrary hermitian cubic norm structure and let $g = ((a,v),(u, av + v^\sharp)) \in G$ and suppose that $\overline{N(v)^2} = N(v^\sharp)$.
		Then $g$ is one-invertible if and only if $\nu(g)$ is invertible.
		\begin{proof}
			If $g$ is one-invertible, Lemma \ref{lem: def nu} implies that $\nu(g)$ is invertible.
			
			Now, we want to prove the converse.
			We want to apply Lemma \ref{lem: universal model one invertible}. We note that the assumption $\overline{N(v)}^2 = N(v^\sharp)$ guarantees the existence of a hermitian cubic norm structure homomorphism from the hermitian cubic norm structure of Proposition \ref{prop: universal model} which maps the one-invertible element of Lemma \ref{lem: universal model one invertible} of previous lemma to $g$. We also use $g_l$ and $g_r$ to denote the homomorphic images of the $g_l$ and $g_r$ obtained from Lemma \ref{lem: universal model one invertible}.
			
			We remark that $\tau = g_l g g_r$ acts on the Lie algebra of the hermitian cubic norm structure of Lemma \ref{lem: universal model one invertible} and on the Lie algebra of $J$ and that these actions are preserved. So, we use that $\tau$ acts on  $M = \langle (t - \bar{t})_{2}, (\text{Id}, - \text{Id}), (t - \bar{t})_{-2} \rangle$ as the matrix
			\[ \begin{pmatrix}
				0 & 0 & \nu \\ 0 & - 1 & 0 \\ \nu^{-1} & 0 & 0
			\end{pmatrix}\]
		using the ordered basis using to describe $M$. The second and third column are obtained using the definition of $\nu$ and the fact that $\tau$ reverses the grading of this algebra. To obtain the first column, use the reversion of the grading combined with the fact that the determinant should equal $1$.
		This also implies that
		\[ \tau \exp_-(0,t - \bar{t}) \tau^{-1} = \exp_+(0,\nu(g) (t - \bar{t})), \]
		since 
		\[ \exp_-(0,t - \bar{t}) = 1 + e_2 + e_4\]
		with $e_2 = \ad (t - \bar{t})_{-2}$ and $e_4$ the unique endomorphism of $L$ that acts as $-4$ on the grading and satisfies
		\[ e_4 [x,y] = [e_2 x, e_2 y]\]
		for all $x,y$ in the Lie algebra.
		
		Now, $r \mapsto r_+= \exp_+(0,r(t - \bar{t}))$ and $r \mapsto r_- = \exp_-(0, (t - \bar{t})/(1 - 4\alpha))$ form a divided power representation of the Jordan pair $(R,R)$ with $Q_x y = x^2 y$ (definition in \cite{faulkner2000jordan}).
		In particular,
		\[ \beta(\mu,\lambda) = (- \lambda/(1 - \mu\lambda))_- \mu_+ \lambda_- (- \mu/ (1 - \mu\lambda))_+\]
		is $0$ graded by the exponential property proved in \cite{faulkner2000jordan} over $R[[\mu,\lambda]]$.
		Using this fact, one can show that $\beta(\mu,\lambda) v = (1 - \mu\lambda)^{i} v$ for $v \in L_i$.
		For $i = 1$ this is a straightforward calculation; for $i = -1$ the same straightforward calculation works for $\beta(\mu,\lambda)^{-1}$; for other $i$ it follows from the fact that $\beta(\mu,\lambda)$ is an automorphism and that $L$ is generated by $A_1$ and $A_{-1}$.
		
		We note that $\beta(\mu,\lambda)$ exists and acts the same for each invertible $1 - \mu \lambda$.
		Moreover, we have $\beta(\alpha \mu, \lambda) = \beta(\mu, \alpha \lambda)$ for all invertible $1 - \alpha \mu \lambda$.
		Now, 
		\[ \tau \beta(\mu, \lambda) \tau^{-1} = \beta^{-1}(- \nu(g) \lambda, - \nu(g)^{-1} \mu) = \beta^{-1}(\lambda, \mu).\]
		This shows that
		\[ \beta(1,\lambda\mu) \tau \beta(1,\lambda\mu) = \tau.\]
		We conclude that $\tau$ reverses the grading.
		\end{proof}
	\end{proposition}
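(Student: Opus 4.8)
The forward implication is immediate: if $g$ is one-invertible there are $g_l, g_r \in G$ with $\exp_-(g_l)\exp_+(g)\exp_-(g_r)$ reversing the grading of $L$, and composing this with the formula $\exp_+(g)(s)_{-2} = \nu(g)s_2 \bmod L_{<2}$ from Lemma~\ref{lem: def nu} forces $\nu(g)$ to be a unit. So the whole content is the converse, and the plan for that is to reduce to the explicit one-invertible element of Lemma~\ref{lem: universal model one invertible}. The hypothesis $\overline{N(v)}^2 = N(v^\sharp)$ is precisely what Proposition~\ref{prop: universal model} needs in order to produce a hermitian cubic norm structure homomorphism $\phi$ from a scalar extension of the universal model $U$ onto a structure containing $g$, sending the universal element to $g$ and the scalar $\nu(u,a,v)$ to the unit $\nu(g)$. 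Transporting the universal $g_l, g_r$ along $\phi$ and setting $\tau = \exp_-(g_l)\exp_+(g)\exp_-(g_r) \in \Aut(L)$, the goal becomes to show that $\tau$ reverses the $\mathbb{Z}$-grading of $L = L_J$.

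First I would pin down how $\tau$ acts on the canonical $3$-dimensional submodule $M = \langle (t-\bar{t})_2,\,(\mathrm{Id},-\mathrm{Id}),\,(t-\bar{t})_{-2}\rangle$. Since $(\mathrm{Id},-\mathrm{Id})$ agrees, up to the unit $1-4\alpha$, with $[(t-\bar{t})_2,(t-\bar{t})_{-2}]$, the submodule $M$ is spanned by ``canonical'' elements; hence $\phi$ maps $M_U$ onto $M$, and because $\tau\phi = \phi\tau_U$ while the universal $\tau_U$ does reverse the grading of $L_U$ (this is the conclusion of Lemma~\ref{lem: universal model one invertible}), $\tau$ preserves $M$ and its matrix on $M$ is the specialization of that of $\tau_U$ on $M_U$, namely $\left(\begin{smallmatrix} 0 & 0 & \nu(g) \\ 0 & -1 & 0 \\ \nu(g)^{-1} & 0 & 0 \end{smallmatrix}\right)$ in the given ordered basis (the middle and last columns from grading-reversal together with the definition of $\nu$, the first column from $\det = 1$). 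Conjugating by $\tau$ then yields $\tau\,\exp_-(0, t-\bar{t})\,\tau^{-1} = \exp_+\bigl(0, \nu(g)(t-\bar{t})\bigr)$, since $\exp_-(0,t-\bar{t})$ is determined inside $\langle G^+, G^-\rangle$ by its degree $-2$ part $\ad(t-\bar{t})_{-2}$ together with the automorphism property.

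Knowing $\tau$ only on $M$ does not suffice to conclude that it reverses the grading of all of $L$ — grading-reversal does not transfer directly along the (in general non-surjective) $\phi$ — so the last step is to bootstrap via the divided-power/Jordan-pair calculus of \cite{faulkner2000jordan}. The one-parameter families built from $r \mapsto \exp_+(0, r(t-\bar{t}))$ and $r \mapsto \exp_-(0, r(t-\bar{t})/(1-4\alpha))$ form a divided power representation of the Jordan pair $(R,R)$ with $Q_x y = x^2 y$; for a unit $1-\mu\lambda$ the associated quasi-inverse automorphism $\beta(\mu,\lambda)$ is grading-preserving and acts on $L_i$ as multiplication by $(1-\mu\lambda)^i$ (immediate on $A_1$ and $A_{-1}$, then forced on the rest since $L$ is generated by $A_1\cup A_{-1}$). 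From $\tau\,\exp_-(0,t-\bar{t})\,\tau^{-1} = \exp_+(0,\nu(g)(t-\bar{t}))$ one gets $\tau\beta(\mu,\lambda)\tau^{-1} = \beta^{-1}(-\nu(g)\lambda, -\nu(g)^{-1}\mu)$, which equals $\beta^{-1}(\lambda,\mu)$ using $\beta(c\mu,\lambda) = \beta(\mu, c\lambda)$; hence $\beta(1,\lambda\mu)\,\tau\,\beta(1,\lambda\mu) = \tau$. Comparing graded components of both sides and using that $\beta(1,\lambda\mu)$ scales $L_i$ by $(1-\lambda\mu)^i$ forces $\tau L_i \subseteq L_{-i}$, so $\tau$ reverses the grading and $g$ is one-invertible.

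The main obstacle lies not in this transport-and-bootstrap argument but in Lemma~\ref{lem: universal model one invertible} itself: producing the explicit $g_l$ and $g_r$ over the universal ring and verifying that $\tau_U$ really does reverse the grading on the whole Lie algebra $L_U$ is a large polynomial identity, which is why it is checked by computer algebra in Appendix~\ref{app A}. Beyond invoking that lemma, the only conceptual care needed is the observation above — that pushing the universal conclusion forward along $\phi$ only controls $\tau$ on $M$, so the $\beta$-calculus is genuinely required to recover full grading-reversal on $L_J$.
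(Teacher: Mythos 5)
Your proposal is correct and follows essentially the same route as the paper: reduce to the explicit one-invertible element of Lemma \ref{lem: universal model one invertible} via the homomorphism from the universal model, read off the action of $\tau$ on the canonical submodule $M$, and then use the divided-power $\beta(\mu,\lambda)$ calculus to upgrade this to full grading reversal on $L$. Your explicit remark that grading reversal does not transfer directly along the (non-surjective) homomorphism, so that the $\beta$-bootstrap is genuinely needed, is exactly the point the paper's proof is organized around.
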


\begin{lemma}
	\label{lem: gr oneinv}
	Suppose that $g$ is one-invertible, then $g_l$ and $g_r$ are one-invertible as well. Moreover, setting $\tau_{g,\pm} = \exp_\mp(g_l) \exp_\pm(g) \exp_\mp(g_r)$ we have $\tau_{g,+} = \tau_{g_l,-} = \tau_{g_r,-}$ and
	\begin{itemize}
		\item $(g_l)_r = g$,
		\item $(g_l)_l = \tau_g^{-1}(g_r)$,
		\item $(g_r)_l = g$,
		\item $(g_r)_r = \tau_g (g_l)$.
	\end{itemize}
	\begin{proof}
		Everything follows from
		\[ \tau_{g,+} = g g_r g_r^{-1} g^{-1} g_l^{-1} g_l g_l g g_r = g g_r \tau_{g,+}^{-1} g_l \tau_{g,+} \]
		and variations thereof, where we dropped the exponentials (all $g_l$, $g_r$ should come with an $\exp_-$ and all $g$ with an $\exp_+$).
		To switch between $G^+$ and $G^-$, one can make use of $\tau_{(\sqrt{2},1)}$ which sends $\exp_+(a,b)$ to $\exp_-(a,b)$, as can be observed from \cite[Remark 5.10]{OKP}.
	\end{proof}
\end{lemma}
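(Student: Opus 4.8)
The plan is to derive the entire lemma formally from the single hypothesis that $\tau := \tau_{g,+} = \exp_-(g_l)\exp_+(g)\exp_-(g_r)$ reverses the grading of $L$, using two structural facts about the projective elementary group $\langle G^+, G^-\rangle$. First, $G^+$ (resp.\ $G^-$) is exactly the set of its elements of the form $1 + \sum_{i=1}^{4} f_i$ with each $f_i$ of weight $+i$ (resp.\ $-i$) for the grading. Second, conjugation by any grading-reversing automorphism of $L$ therefore interchanges $G^+$ and $G^-$, and in particular $G^\pm$ contains no grading-preserving element except $1$. Throughout I would suppress the exponentials, keeping the convention of the statement that $g$ is dressed with $\exp_+$ and $g_l,g_r$ with $\exp_-$; the hypothesis then reads $\tau = g_l\,g\,g_r$, with $\tau$ grading-reversing.

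For $g_r$ I would use the trivial regrouping
\[ \tau = g_l\,g\,g_r = (g\,g_r)\,(g_r^{-1}g^{-1}g_l^{-1})\,(g_l\,g_l\,g\,g_r) = (g\,g_r)\,\tau^{-1}g_l\,\tau . \]
Since $\tau$ reverses the grading, $\tau^{-1}\exp_-(g_l)\,\tau$ is an element of $\langle G^+, G^-\rangle$ of the form $1 + \sum_{i=1}^4 f_i$ with $f_i$ of weight $+i$, hence lies in $G^+$ by the first fact; write it as $\exp_+(w)$ with $w\in G$ (so $w = \tau_g(g_l)$ in the notation of the statement). Then $\tau = \exp_+(g)\,\exp_-(g_r)\,\exp_+(w)$ is grading-reversing, which is precisely the assertion that $g_r$ is one-invertible with $(g_r)_l = g$, $(g_r)_r = w = \tau_g(g_l)$, and $\tau_{g_r,-} = \tau = \tau_{g,+}$.

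For $g_l$ I would argue the mirror way: rewriting $\tau = g_l\,g\,g_r$ as $g_l\,g = \tau\,g_r^{-1}$, the element $X := \tau\,g_r\,\tau^{-1}$ satisfies $X\,g_l\,g = \tau$, and $\tau\exp_-(g_r)\,\tau^{-1}$ lies in $G^+$ for the same reason, say $X = \exp_+(w')$ with $w' = \tau_g^{-1}(g_r)$. Hence $\tau = \exp_+(w')\,\exp_-(g_l)\,\exp_+(g)$ is grading-reversing, so $g_l$ is one-invertible with $(g_l)_r = g$, $(g_l)_l = w' = \tau_g^{-1}(g_r)$, and $\tau_{g_l,-} = \tau = \tau_{g,+}$. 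The four bulleted identities and the equalities $\tau_{g,+} = \tau_{g_l,-} = \tau_{g_r,-}$ are exactly what these two regroupings produce. One last point: the sandwiches $\tau_{g_l,-},\tau_{g_r,-}$ obtained this way interchange the roles of $\exp_+$ and $\exp_-$ relative to the definition of one-invertibility, so to land inside that definition verbatim I would conjugate by the grading-reversing element $\tau_{(\sqrt{2},1)}$, which swaps $\exp_+$ and $\exp_-$ on $G$ by \cite[Remark 5.10]{OKP}, leaving the parameters and their partners unchanged.

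I do not expect a genuine obstacle: the argument is purely formal once the two structural facts about $\langle G^+, G^-\rangle$ are in hand. The only delicate points are bookkeeping ones: tracking which element carries $\exp_+$ and which carries $\exp_-$; fixing the normalization of the $\tau_g$-action on $G$ so that the bulleted relations come out with $\tau_g$ and $\tau_g^{-1}$ placed as stated; and, crucially, checking at the two places above that $\tau\exp_\mp(\cdot)\tau^{\mp1}$ really lands in $G^\pm$ rather than merely in $\langle G^+, G^-\rangle$ --- which is precisely where the weight-space characterization of $G^\pm$ is used.
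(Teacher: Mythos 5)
Your proposal is correct and follows essentially the same route as the paper: the identical regrouping $\tau = g\,g_r\,\tau^{-1}g_l\,\tau$ (and its mirror), the characterization of $G^\pm$ inside $\langle G^+,G^-\rangle$ by the weight decomposition $1+\sum_i e_i$ to see that $\tau^{\mp 1}\exp_\mp(\cdot)\tau^{\pm 1}$ lands in $G^\pm$, and $\tau_{(\sqrt{2},1)}$ to swap the roles of $\exp_+$ and $\exp_-$. You merely make explicit the bookkeeping that the paper leaves as ``variations thereof.''
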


\begin{remark}
	We shall often write $\tau_g$ for the $\tau_{g,+}$ defined in the previous lemma.
\end{remark}

Recall that a hermitian cubic norm structure is division if the quartic norm is invertible for all elements in $G \setminus \{1\}$ and that these only exist over fields $R$.

\begin{lemma}
	\label{lem: div implies K field}
	A division cubic norm structure over $K/R$ can only exist if $K = R[t]/(t^2 - t + \alpha)$ is a quadratic field extension.
	\begin{proof}
		Evaluating $\nu$ at $((k,0),(k\bar{k}t, 0))$ yields $(k \bar{k})^2 \alpha$ so that $k \bar{k}$ is invertible for all nonzero $k$, which implies that each $k \in K$ is invertible.
	\end{proof}
\end{lemma}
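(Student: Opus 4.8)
The plan is to evaluate the quartic norm $\nu$ on elements of $G$ whose data are as simple as possible, so as to extract an invertibility statement about $K$ directly. Recall that for any $k \in K$, the pair $((k,0),(u,0))$ lies in $G$ provided $u + \bar u = k\bar k$ (the condition $a\bar a + T(v,v)$ with $v = 0$). The natural choice is $u = (k\bar k)\,t$, which satisfies $u + \bar u = k\bar k\,(t + \bar t) = k\bar k$ since $t + \bar t = 1$. Plugging $a = k$, $v = 0$, $u = k\bar k\,t$ into the formula
\[ \nu(u,a,v) = u\bar u - a\bar a\,T(v,v) + a\,N(v) + \bar a\,\overline{N(v)} - T(v^\sharp,v^\sharp) \]
from Lemma~\ref{lem: def nu} / the definition of the quartic norm, every term except $u\bar u$ vanishes, leaving $\nu = u\bar u = (k\bar k\,t)(k\bar k\,\bar t) = (k\bar k)^2\, t\bar t$. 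Now $t\bar t$ is the norm of $t$ in $K/R$; since $t^2 - t + \alpha = 0$ gives $t(1-t) = \alpha$, i.e.\ $t\bar t = \alpha$, this is exactly $(k\bar k)^2\alpha$ — matching the expression already asserted in the proof sketch in the excerpt.

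The second step is to use the division hypothesis. If $J$ is a division hermitian cubic norm structure, then by definition $\nu(g) \in R^\times$ for every $g \in G \setminus \{1\}$ (and we already know $R$ is a field). Taking $k \in K$ nonzero, the element $g = ((k,0),(k\bar k\,t,0))$ is not the identity, so $(k\bar k)^2\alpha \in R^\times$. Since $\alpha$ is a fixed scalar with $1 - 4\alpha$ invertible, this forces $k\bar k \in R^\times$: indeed $(k\bar k)^2$ is a unit, hence so is $k\bar k$ (in a commutative ring, a power of an element being a unit implies the element is a unit). Therefore every nonzero $k \in K$ has $k\bar k$ invertible, and $k \cdot (\bar k (k\bar k)^{-1}) = 1$ shows $k$ itself is invertible in $K$. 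Hence $K$ is a field.

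Finally, $K$ is a finite (rank $2$, since $t^2 - t + \alpha$ is monic of degree $2$) free extension of the field $R$, so it is a quadratic field extension of $R$ — it has no zero divisors and is $2$-dimensional over $R$, so either $K \cong R \times R$, which is excluded because it is not a field, or $K$ is a degree-$2$ field extension. This is exactly the assertion. I do not expect any genuine obstacle here: the only subtlety is making sure the chosen $u$ genuinely puts the element in $G$ (handled by $t + \bar t = 1$) and that one correctly reads off $t\bar t = \alpha$ from the defining relation; everything else is the elementary observation that a unit power forces the base to be a unit.
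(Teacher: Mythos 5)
Your proposal is correct and follows exactly the same route as the paper: evaluate $\nu$ at $((k,0),(k\bar{k}t,0))$, obtain $(k\bar{k})^2\alpha$, and conclude from the division hypothesis that $k\bar{k}$, hence $k$, is invertible for every nonzero $k$, so that the rank-two $R$-algebra $K$ is a field. Your write-up just fills in the routine verifications (membership in $G$, $t\bar{t}=\alpha$, unit power implies unit) that the paper leaves implicit.
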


\begin{proposition}
	A hermitian cubic norm structure is division if and only if each $g \neq 1$ is one-invertible.  
	\begin{proof}
		This is more or less Proposition \ref{prop: g one inv iff nu inv with ass}. Namely, if we have a division hermitian cubic norm structure, Lemmas \ref{lem: div implies K field} and \ref{lem: N on basis} show that $N(v^\sharp) = \overline{N(v)}^2$ so that  we can apply that proposition to conclude that each $g \neq 1$ is one-invertible.
		
		We remark that if $g$ is one-invertible that $s \mapsto \nu(g) s$ is the action of Equation \ref{eq: oneinvert} restricted to $L_{-2} \longrightarrow L_2$, so that $\nu(g)$ is invertible.
	\end{proof}
\end{proposition}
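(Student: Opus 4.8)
The plan is to recognise this statement as essentially a repackaging of Proposition~\ref{prop: g one inv iff nu inv with ass} together with Lemmas~\ref{lem: div implies K field} and~\ref{lem: N on basis}, so that the proof is short. I would argue the two implications separately.

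For the ``only if'' direction, suppose the hermitian cubic norm structure is division, i.e.\ $\nu(g) \in R^\times$ for every $g \in G \setminus \{1\}$; recall that this already forces $R$ to be a field. By Lemma~\ref{lem: div implies K field}, $K = R[t]/(t^2 - t + \alpha)$ is then a quadratic field extension of $R$, in particular a domain, so Lemma~\ref{lem: N on basis} applies and yields $N(v^\sharp) = \overline{N(v)}^2$ for every $v \in J$. This is exactly the hypothesis required by Proposition~\ref{prop: g one inv iff nu inv with ass}: for each $g = ((a,v),(u,av+v^\sharp)) \in G$ that proposition gives that $g$ is one-invertible if and only if $\nu(g) \in R^\times$. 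Since the latter holds for all $g \neq 1$ by the division hypothesis, every $g \neq 1$ is one-invertible.

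For the ``if'' direction, suppose every $g \in G \setminus \{1\}$ is one-invertible and fix such a $g$. By definition of one-invertibility there exist $g_l, g_r \in G$ such that $\tau = \exp_-(g_l)\exp_+(g)\exp_-(g_r)$ reverses the $\mathbb{Z}$-grading of $L$, so that $\tau$ restricts to an $R$-module isomorphism $L_{-2} \to L_2$. Now $\exp_-(g_r)$ fixes $(s)_{-2}$ (since $L_{-3} = L_{-4} = 0$), $\exp_-(g_l)$ fixes $(s)_2$ modulo $L_{<2}$, and by Lemma~\ref{lem: def nu} the map $\exp_+(g)$ sends $(s)_{-2}$ to $\nu(g)(s)_2$ modulo $L_{<2}$; hence the induced isomorphism $L_{-2}\to L_2$ is multiplication by $\nu(g)$. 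As $L_{-2}$ and $L_2$ are free of rank one over $R$, $\nu(g)$ must be a unit. Since this holds for all $g \neq 1$, the structure is division by definition.

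The only genuinely hard step is the one hidden in the ``only if'' direction, namely the appeal to Proposition~\ref{prop: g one inv iff nu inv with ass}, whose proof rests on the explicit one-invertibility computation performed in the universal model (Lemma~\ref{lem: universal model one invertible} and the appendix). Everything else is bookkeeping: extracting from the division hypothesis both the field condition on $K$ and the identity $N(v^\sharp) = \overline{N(v)}^2$, and reading off $\nu(g)$ from the grading action of $\tau$ in the converse.
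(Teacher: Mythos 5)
Your proposal is correct and follows essentially the same route as the paper: the forward direction chains Lemma \ref{lem: div implies K field} and Lemma \ref{lem: N on basis} to get $N(v^\sharp)=\overline{N(v)}^2$ and then invokes Proposition \ref{prop: g one inv iff nu inv with ass}, while the converse reads off $\nu(g)$ as the induced map $L_{-2}\to L_2$ of the grade-reversing element, exactly as in the paper's closing remark. Your version merely spells out the bookkeeping (why $\exp_-(g_r)$ and $\exp_-(g_l)$ do not disturb the top-degree component) in more detail.
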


\section{Moufang sets from division hermitian cubic norm structures}

We describe the Moufang set related to a division hermitian cubic norm structures. None of the results obtained in this section are unexpected. The final description we obtain at the end is a straightforward extension of \cite[Theorem 5.1.5]{Tom2019} and the methods employed here can be straightforwardly generalized to \textit{division} structurable algebras in the sense of \cite{OKP}, if one calls such an algebra division whenever each element different from $1$ in the associated operator Kantor pair is one-invertible.

\begin{definition}
	We call a set $M$ equipped with groups $U_m \subset \text{Sym}(M)$ for each $m \in M$ such that
	\begin{enumerate}
		\item $U_m$ acts regularly, (i.e., sharply transitively) on $M \setminus \{m\}$,
		\item for each $g \in U_m$, we have $g U_n g^{-1} \subset U_{g \cdot n}$ for all $n$,
	\end{enumerate}
	a \textit{Moufang set}. The groups $U_m$ are called the root groups and $G(M) = \langle U_m | m \in M \rangle$ is called the \textit{little projective group}.
\end{definition}

\begin{example}
	Consider a \textit{saturated split BN-pair of rank one}, cfr. \cite{Tom09}, i.e., a group $G$ containing subgroups $B$ and $N$ such that
	\begin{enumerate}
		\item $G = \langle B , N \rangle,$
		\item $H = B \cap N$ is a normal subgroup of $N$,
		\item $N = \langle H, \omega \rangle$ for some $\omega \in N$ with $\omega^2 \in H$ and $G = B \cup B \omega B$ and $\omega B \omega \neq B$,
		\item $B \cong H \ltimes U$ for a normal subgroup $U$ ,
		\item $H = B \cap B^\omega$.
	\end{enumerate}
	In this case, $X = \{ U^g | g \in G\}$ is a Moufang set \cite[Proposition 2.1.3]{Tom09} with $U^g$ having itself as root group.
	This immediately implies that any rank one reductive linear algebraic group defines a Moufang set by \cite[(15) and (16)]{Tits64} with $N$ the normalizer of a maximal torus and $B$ a minimal parabolic containing the torus. In the context of algebraic groups, one typically works with the isomorphic Moufang set defined on $X = \langle B^g | g \in G \rangle$.
\end{example}

We will describe the Moufang set structure we obtain from a division hermitian cubic norm structure directly, without referring to algebraic groups. However, since we will later prove that each (finite dimensional) division hermitian cubic norm structure comes from a rank one algebraic group, this would be also a legitimate way to obtain some of the results we prove here.

Given that each parabolic subgroup of an algebraic group is its own normalizer, we prefer to work with $X = G/B$.

Consider a division hermitian cubic norm structure $J$ and consider the automorphism groups $G^\pm = \exp_\pm(G)$ of its Lie algebra. Set $G(J) = \langle G^+, G^- \rangle$ and define the group $P = \Inn(G^+,G^-) \ltimes G^+ \subset G(J)$.
This semidirect product is justified by \cite[Theorem 4.22]{Michiel2025}

\begin{lemma}
	Suppose that $\tau_1$ and $\tau_2$ are two grade reversing automorphisms of $L$, then $\tau_1 \tau_2 \in \Inn(G^+,G^-)$. In particular, for each $g \in G \setminus \{1\}$ and each grade reversing $\tau$ we have
	\[ \tau = (g_l)_+ g_- (g_r)_+ h\]
	for some $h \in \Inn(G^+,G^-)$.	
	Furthermore, $\tau P^+ = g_+ (g_r)_- P^+$ holds for all $g \neq 1$.	
	\begin{proof}
		Trivial.
	\end{proof}
\end{lemma}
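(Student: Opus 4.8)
I would treat the three assertions in turn; all are formal consequences of the construction and of Lemma~\ref{lem: gr oneinv}, which is presumably why the author writes ``trivial''. For the first assertion, observe that a grade-reversing automorphism sends $L_i$ onto $L_{-i}$, so a composite $\tau_1\tau_2$ of two such sends $L_i$ onto $L_i$, i.e.\ preserves the grading. Since the grade-reversing automorphisms relevant here all lie in $G(J)=\langle G^+,G^-\rangle$ --- for a division structure each $\tau_{g,\pm}$ with $g\neq 1$ does, by the preceding propositions --- the product $\tau_1\tau_2$ lies in $G(J)$, and a grading-preserving element of $G(J)$ is by definition an element of $\Inn(G^+,G^-)$.

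For the second assertion, fix $g\neq1$. Being a division structure, $g$ is one-invertible, so $g_l,g_r$ and $\tau_{g,\pm}$ of Lemma~\ref{lem: gr oneinv} are defined. The first thing to check is that $\tau_{g,-}=(g_l)_+\,g_-\,(g_r)_+$ is itself grade-reversing: from Lemma~\ref{lem: gr oneinv} one has $(g_l)_r=g$ and $\tau_{h_r,-}=\tau_{h,+}$ for one-invertible $h$, so taking $h=g_l$ gives $\tau_{g,-}=\tau_{(g_l)_r,-}=\tau_{g_l,+}$, and $\tau_{g_l,+}$ reverses the grading by \eqref{eq: oneinvert} applied to the one-invertible element $g_l$ (alternatively, invoke the symmetry of the construction under $\exp_+\leftrightarrow\exp_-$). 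Now for an arbitrary grade-reversing $\tau\in G(J)$, the element $\tau_{g,-}^{-1}\tau$ is a product of two grade-reversing elements of $G(J)$, hence lies in $\Inn(G^+,G^-)$ by the first assertion; setting $h:=\tau_{g,-}^{-1}\tau$ yields $\tau=\tau_{g,-}h=(g_l)_+\,g_-\,(g_r)_+\,h$.

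For the third assertion, I would first note that the second assertion applied to the grade-reversing element $\tau_{g,+}$ gives $\tau_{g,+}\in\tau_{g,-}\Inn(G^+,G^-)$, and applied to an arbitrary grade-reversing $\tau$ gives $\tau\in\tau_{g,-}\Inn(G^+,G^-)$; since $\Inn(G^+,G^-)\subseteq P^+$ this shows $\tau P^+=\tau_{g,-}P^+=\tau_{g,+}P^+$ for every grade-reversing $\tau$, a coset independent of the choice of $\tau$ (and of $g$). It remains to identify this coset with $g_+(g_r)_-P^+$. Here I use Lemma~\ref{lem: gr oneinv} once more: $(g_r)_l=g$ and $\tau_{g_r,-}=\tau_{g,+}$, so solving the defining relation $\tau_{g_r,-}=\exp_+((g_r)_l)\exp_-(g_r)\exp_+((g_r)_r)$ for $\exp_-(g_r)$ and using $(g_r)_l=g$ gives $\exp_-(g_r)=\exp_+(g)^{-1}\,\tau_{g,+}\,\exp_+((g_r)_r)^{-1}$. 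Substituting and cancelling,
\[
g_+(g_r)_-P^+=\tau_{g,+}\,\exp_+((g_r)_r)^{-1}P^+=\tau_{g,+}P^+,
\]
since $\exp_+((g_r)_r)^{-1}\in G^+\subseteq P^+$; combined with the previous sentence this is exactly $\tau P^+=g_+(g_r)_-P^+$.

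The one point needing care --- and the only reason this is not literally a one-line argument --- is keeping track of which triple products $\exp_\mp(\cdot)\exp_\pm(\cdot)\exp_\mp(\cdot)$ reverse the grading, and of the precise left/right roles in the identities $(g_l)_r=g$, $(g_r)_l=g$, $\tau_{g,+}=\tau_{g_l,-}=\tau_{g_r,-}$ from Lemma~\ref{lem: gr oneinv}; once those are fixed, every remaining step is a single cancellation in $G(J)$.
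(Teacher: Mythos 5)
Your proof is correct and is exactly the formal unwinding the paper intends by ``Trivial'': products of grade-reversing elements preserve the grading and hence land in $\Inn(G^+,G^-)$, after which everything follows from the identities $(g_l)_r=g$, $(g_r)_l=g$, $\tau_{g,+}=\tau_{g_r,-}$ of Lemma \ref{lem: gr oneinv} together with $\Inn(G^+,G^-),G^+\subseteq P^+$. Your one added precaution --- reading ``grade reversing automorphisms'' as grade-reversing elements of $G(J)=\langle G^+,G^-\rangle$, since otherwise $\tau_1\tau_2$ need not lie in $\Inn(G^+,G^-)$ as defined --- is the right reading and consistent with how the lemma is used afterwards.
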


\begin{lemma}		
	There exists a bijection $G(J)/P^+ \longrightarrow G^- \cup \{\tau\}$ with $\tau$ a grade reversing automorphism\footnote{Any $g_l g g_r$ suffices as $\tau$.}
	\begin{proof}
		
		We will show that $G^- P^+ \cup \tau P^+ = G$.
		Establishing this is sufficient, since (1) $g_- P^+ \neq h_- P^+$ if and only if $(h^{-1} g)_- \notin P^+$ which is trivially so if $G^- \ni g_- \neq h_- \in G^-$, and since (2) $\tau \notin P^+$ which also shows (3) $h_- \tau \in \tau P^+$ for each $h_- \in G^-$.
		
		Since $G$ is generated by $G^+$ and $\tau$ it is sufficient to show that $G^- P^+ \cup \tau P^+$ is closed under right and left multiplication by $G^+$ and $\tau$. For $G^+$, we have that
		\begin{itemize}
			\item $G^- P^+ G^+ = G^- P^+$,
			\item $\tau P^+ G^+ = \tau P^+$,
			\item $G^+ \tau P^+ \subset \tau P^+ \cap G^- P^+$ since $\tau^{-1} = g^{-1}_+ (g_l^{-1})_- z_+$ with $z_+ = \tau(g_r^{-1})$ for $g_+ \neq 1$ so that 
			\[ g_+ \tau P^+ = g_+ (g^{-1})_+ (g_l^{-1})_- P^+ = (g_l^{-1})_- P^+ \subset G^- P^+\]
			holds in that case,
			\item $G^+ G^- P^+ \subset \tau P^+ \cup G^- P^+$ since \[g_+ h_- P^+ = (g h_l^{-1})_+ (h_l)_+ h_- (h_r)_+ P^+ \subset G^+ \tau P^+\]
			whenever $h_- \neq 1$.
		\end{itemize}
		For left multiplications with $\tau$, we have that $\tau G^- P^+ = G^+ \tau P^+ \subset \tau P^+ \cup G^- P^+$ and that $\tau^2 P^+ = P^+$.
		We also know that $g_+ \tau = g_+ (g^{-1})_+ (g_l^{-1})_- (\tau(g_r)^{-1})_+ \in G^- P^+$ whenever $g_+ \neq 1$ so that $P^+ \tau \subset \tau \Inn(G^+,G^-) \cup G^- P^+ \subset \tau P^+ \cup G^- P^+$. This shows that $\tau P^+ \tau \cup G^- P^+ \tau \subset G^- P^+ \cup \tau P^+$.			
	\end{proof}
\end{lemma}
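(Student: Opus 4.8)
The plan is to prove a rank-one Bruhat-type decomposition $G(J) = G^- P^+ \sqcup \tau P^+$ and to read the bijection off it. I would first record the two easy structural facts that drive everything. By the characterisation of $G^{\pm}$ recalled when the projective elementary group was introduced, an element of $G(J) = \langle G^+, G^- \rangle$ lies in $G^+$ (resp.\ $G^-$) exactly when its action on $L$ raises (resp.\ lowers) the grading; hence conjugation by any grade-reversing automorphism $\tau$ satisfies $\tau G^{\pm}\tau^{-1} = G^{\mp}$, and in particular $G^- = \tau G^+ \tau^{-1}$, so that $G(J) = \langle G^+, \tau \rangle$. Moreover $\tau^2$ preserves the grading and lies in $G(J)$, hence $\tau^2 \in \Inn(G^+,G^-) \subseteq P^+$.

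Next I would show that $S := G^- P^+ \cup \tau P^+$ equals $G(J)$. Since $S$ is a union of right $P^+$-cosets, contains $1 \in P^+$, and $G^+ \cup \{\tau\}$ generates $G(J)$, it suffices to check that $S$ is stable under left multiplication by $G^+$ and by $\tau$ (stability under $\tau^{-1}$ then being automatic from $\tau^2 \in P^+$). The checks involving $\tau$ are formal: $\tau \cdot G^- P^+ = (\tau G^-)P^+ = (G^+ \tau)P^+$ and $\tau \cdot \tau P^+ = \tau^2 P^+ = P^+ \subseteq S$. The one genuine ingredient is $G^+ \tau P^+ \subseteq S$, and this is where the division hypothesis enters: every $g \neq 1$ is one-invertible, so by the preceding lemma $\tau P^+ = \exp_+(g)\exp_-(g_r)P^+$ for all $g \neq 1$, whence $\exp_+(g)^{-1}\tau P^+ = \exp_-(g_r)P^+ \subseteq G^- P^+$; since every nontrivial element of $G^+$ has the form $\exp_+(g)^{-1}$ with $g \neq 1$ and $\exp_+(1)\tau P^+ = \tau P^+$, we obtain $G^+ \tau P^+ \subseteq G^- P^+ \cup \tau P^+ = S$. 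Stability under $G^+$ now follows: $G^+ \cdot \tau P^+ \subseteq S$ directly, while for $G^+ \cdot G^- P^+$ one rewrites a nontrivial $\exp_-(h)$ as $\exp_-(h)P^+ = \exp_+(h_l^{-1})\tau P^+$ — using $(h_l)_r = h$ from Lemma \ref{lem: gr oneinv} in the description of $\tau P^+$ just obtained — so that $\exp_+(g)\exp_-(h)P^+ = \exp_+(gh_l^{-1})\tau P^+ \subseteq G^+ \tau P^+ \subseteq S$, the case $h = 1$ being $G^+ P^+ = P^+ \subseteq S$. Hence $S = G(J)$.

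To finish I would record the disjointness needed for the counting. One checks $G^- \cap P^+ = \{1\}$ and $\tau \notin G^- P^+$; both rest on the observation that $P^+$ stabilises the flag $L_2 \subset L_2 \oplus L_1 \subset \cdots \subset L$ (the $\Inn(G^+,G^-)$-part fixes each homogeneous summand and the $G^+$-part raises the grading), whereas a nontrivial element of $G^-$ does not stabilise it and $g_-^{-1}\tau$ carries $L_2$ onto $L_{-2}$ for every $g_- \in G^-$; alternatively one may quote the structure of $P^+$ from \cite[Theorem 4.22]{Michiel2025}. Consequently the cosets $g_- P^+$ with $g_- \in G^-$ are pairwise distinct and none of them equals $\tau P^+$, and by the previous step they together exhaust $G(J)/P^+$; therefore $g_- \mapsto g_- P^+$ together with $\tau \mapsto \tau P^+$ is the desired bijection $G^- \cup \{\tau\} \longrightarrow G(J)/P^+$.

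The main difficulty is not any single computation but the left/right coset bookkeeping in the second step, compounded by the fact that a grade-reversing $\tau$ is canonical only up to right multiplication by $\Inn(G^+,G^-)$: one must make sure that the one-invertibility identities coming from the preceding lemma and from Lemma \ref{lem: gr oneinv} are applied for a single fixed $\tau$ and all $g \neq 1$ simultaneously, rather than with a $g$-dependent grade-reversing automorphism.
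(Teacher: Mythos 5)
Your proposal is correct and follows essentially the same route as the paper: both establish the Bruhat-type decomposition $G(J)=G^-P^+\cup\tau P^+$ by checking stability under multiplication by the generators, with the two substantive steps ($G^+\tau P^+\subseteq G^-P^+\cup\tau P^+$ via $\tau P^+=g_+(g_r)_-P^+$, and $G^+G^-P^+\subseteq G^+\tau P^+$ via $(h_l)_r=h$) identical to the paper's bullet points. Your treatment of disjointness via the stabilised flag is slightly more explicit than the paper's "trivially so", but the argument is the same.
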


\begin{proposition}
	Let $J$ be a division hermitian cubic norm structure and consider $M = G(J)/P^+$ and define $U_{gP^+} = g G^+ g^{-1}$ which acts on the left on $M$. This is a Moufang set.
	\begin{proof}
		It is obvious that $h U_{g P^+} h^{-1} = U_{hg P^+}$ for each $h \in G(J)$ so the second property for Moufang sets holds. 			
		Moreover, from the previous lemma, we learn that the map $G^- \cup \{ \tau \} \longrightarrow G(J)/P^+$ defined by $g \mapsto g P^+$ is bijective. So, we see that $U_{\tau P^+} = G^-$ acts regularly. Now, this immediately implies that each $U_{g P^+} = (g \tau^{-1}) U_{\tau P^+} (\tau g^{-1})$ acts regularly.
	\end{proof}
\end{proposition}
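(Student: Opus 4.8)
The plan is to verify the two defining axioms of a Moufang set for the data $M = G(J)/P^+$ with root groups $U_{gP^+} = g G^+ g^{-1}$, using the bijection established in the preceding lemma. The second axiom, conjugation-compatibility, is essentially free: for any $h \in G(J)$ we have $h U_{gP^+} h^{-1} = h(gG^+g^{-1})h^{-1} = (hg)G^+(hg)^{-1} = U_{hgP^+} = U_{h \cdot (gP^+)}$, so $h U_m h^{-1} = U_{h\cdot m}$ exactly (in fact with equality, not just inclusion). The real content is the first axiom: that each $U_m$ acts sharply transitively on $M \setminus \{m\}$.

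First I would handle a single base point, the one corresponding to a fixed grade-reversing $\tau$. By the previous lemma, the map $G^- \cup \{\tau\} \to G(J)/P^+$, $g \mapsto gP^+$, is a bijection, and $\tau P^+$ is the point I call $m_\tau$. Now $U_{m_\tau} = U_{\tau P^+}$; since $\tau$ normalizes $\Inn(G^+,G^-)$ and conjugates $G^+$ to $G^-$ (because $\tau$ reverses the grading and $\tau^{-1}$ too), we get $\tau G^+ \tau^{-1} = G^-$, hence $U_{\tau P^+} = \tau G^+ \tau^{-1} = G^-$. Under the bijection above, $M \setminus \{m_\tau\}$ corresponds precisely to $\{gP^+ : g \in G^-\}$, i.e. to $G^-$ itself, and the left action of $h_- \in G^- = U_{m_\tau}$ on the point $g_- P^+$ is $h_- g_- P^+$, which under the identification $G^- \cup\{\tau\} \leftrightarrow M$ is just left translation $g_- \mapsto h_- g_-$ in the group $G^-$. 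Left translation of a group on itself is sharply transitive, so $U_{m_\tau}$ acts regularly on $M \setminus \{m_\tau\}$.

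To finish, I would transport this to an arbitrary point $m = gP^+$. Write $m = (g\tau^{-1}) \cdot m_\tau$. Conjugating, $U_{gP^+} = (g\tau^{-1}) U_{\tau P^+} (g\tau^{-1})^{-1}$ by the second axiom already verified, and $g\tau^{-1}$ carries $M\setminus\{m_\tau\}$ bijectively onto $M\setminus\{m\}$; since $U_{m_\tau}$ acts regularly on $M\setminus\{m_\tau\}$, its conjugate $U_m$ acts regularly on $M\setminus\{m\}$. This is exactly the argument sketched in the proposition's proof line $U_{gP^+} = (g\tau^{-1})U_{\tau P^+}(\tau g^{-1})$.

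The main obstacle — really the only nontrivial point — is making sure the identification of $M\setminus\{m_\tau\}$ with $G^-$ as a \emph{$G^-$-set} is correct, i.e. that the left $G^-$-action on cosets matches left multiplication on $G^-$ under the bijection; this requires knowing that $h_- g_- P^+$ again lies in the $G^-$-part (not the $\tau$-part) of the decomposition whenever $g_- \in G^-$, which follows from $G^- P^+$ being closed under left multiplication by $G^-$, as shown in the previous lemma ($G^- P^+ G^- = G^- P^+$ is immediate, but one should double-check the edge behavior at the identity coset, where $h_- g_- = 1$ corresponds to $m_\tau$ and is correctly excluded). Everything else is formal manipulation with the coset bijection and conjugation, so I expect the proof to be quite short — indeed the author writes "It is obvious."
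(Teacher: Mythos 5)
Your proof is correct and takes essentially the same approach as the paper: conjugation-compatibility is immediate, the bijection $G^- \cup \{\tau\} \longrightarrow G(J)/P^+$ identifies the action of $U_{\tau P^+} = \tau G^+ \tau^{-1} = G^-$ on $M \setminus \{\tau P^+\}$ with left translation of $G^-$ on itself, and conjugation transports regularity to every other root group. (One harmless slip in your closing aside: under the bijection the identity coset $P^+$ corresponds to $1 \in G^-$, not to $m_\tau = \tau P^+$, so there is no edge case to exclude there.)
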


Now, we give an alternative description of the Moufang set $G^- \cup \{ \infty\}$. In terms of $U_\infty$ and a permutation $\tau$ of $G^- \cup \{ \infty\}$ mapping $\infty$ to $1$.
This automatically defines all root groups, using $U_{g} = g_- \tau U_{\infty} \tau^{-1} g^{-1}_-$ for all $g \in G^-$. 
 This is one of the descriptions one often studies, e.g., \cite[section 3]{Tom06}. The Moufang set structure we obtain is basically the one obtained in \cite[Theorem 5.16]{Tom2019}.

\begin{remark}
	We will use $\tau_{(\sqrt{2},1)}$ which is a grade reversing element that acts as $\exp_-(x,y) \mapsto \exp_+(x,y)$. 
	If $\tau_{(\sqrt{2},1)} \notin G$ we add it and we observe that this is the same as adding a generator $\tau_{(\sqrt{2},1)} \tau_g$ to $\Inn(G^+,G^-) \subset G$ so that $\tilde{G}/\tilde{P}^+$ does not change (using $\tilde{X}$ to denote the group extended by adding the generator). We remark that $\tilde{G}$ acts on $M \cong \tilde{G}/\tilde{P}^+$.
	
	This modification is not necessary over fields of characteristic $2$ since $\tau_{(\sqrt{2},1)} = \tau_{(0,1)}$ in that case. If the characteristic is different from $2$, one can also show that the modification is unnecessary, since $\tau_{(1,1/2)} \tau_{(\sqrt{2},1)}$ acts as $z \mapsto 2^{-i} z$ for $z \in L_i$, which can be shown to be an element of the group.
\end{remark}

\begin{proposition}
	The Moufang set $G(J)/P^+$ is isomorphic to the Moufang set $G^- \cup \infty$ formed from $G$ and $\tau$ the order $2$ automorphism which acts as $\tau(\infty) = 1$, $\tau(g) = g_l^{-1}.$
	\begin{proof}
		Under the previous construction, we can identify $\tau$ with left multiplication with $\tau_{(\sqrt{2},1)}$, which acts on $G/P^+ \cong G^- \cup \infty$. This yields a Moufang set from a permutation in the sense of \cite[Section 3]{Tom06} isomorphic to the Moufang set we started with.
		
		So, we only have to determine what this precise permutation action is.
				
		For $g \in G^-$, we have that \[\tau_{(\sqrt{2},1)} g_-P^+ = \tau_{(\sqrt{2},1)} \tau_{g_r} \tau_{g_r^{-1}} g_- P^+ \overset{(*)}{=} \tau_{(\sqrt{2},1)} \tau_{g_r}(\tau^{-1}_{g_r} \cdot g_l^{-1})_- P^+ \]
		
		which equals \[ \tau_{(\sqrt{2},1)} (g_l^{-1})_+ \tau_{g_r} P^+ = \tau_{(\sqrt{2},1)} (g_l^{-1})_+ \tau_{(-\sqrt{2},1)} P^+ = (g_l^{-1})_- P^+.\]
		To see that $(*)$ holds, expand $\tau_{g_r}^{-1}$. 
		
	\end{proof}
\end{proposition}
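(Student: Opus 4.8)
The plan is to realize the claimed permutation $\tau$ on $G^- \cup \{\infty\}$ as a concrete automorphism of the little projective group acting by left multiplication on $G(J)/P^+$, and then simply compute where it sends each coset. First I would recall the identification coming from the previous two lemmas: the bijection $G(J)/P^+ \longrightarrow G^- \cup \{\tau\}$ sends $g_- P^+ \mapsto g$ for $g \in G^-$ and $\tau P^+ \mapsto \infty$, where $\tau$ is any fixed grade-reversing automorphism; for definiteness I would take $\tau = \tau_{(\sqrt 2, 1)}$ after the remark's modification guaranteeing $\tau_{(\sqrt 2,1)} \in \tilde G$ (and noting the Moufang set is unchanged). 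Left multiplication by $\tau_{(\sqrt 2,1)}$ is then a permutation of $M = G(J)/P^+$ which, by the Moufang-set-from-a-permutation formalism of \cite[Section 3]{Tom06}, upgrades the single root group $U_\infty = G^-$ together with this permutation to the full Moufang set, and that Moufang set is isomorphic to the one we started with because we are acting by an element of the group.

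Next I would verify the two stated values of the permutation. That $\tau_{(\sqrt 2,1)}$ fixes — or rather swaps — $\infty$ and $1$: since $\tau_{(\sqrt 2,1)}$ reverses the grading and acts as $\exp_-(x,y) \mapsto \exp_+(x,y)$, it has order $2$ as a coset action, $\tau_{(\sqrt 2,1)}\,\tau P^+ = P^+ = 1_- P^+$, i.e.\ $\infty \mapsto 1$, and conversely $\tau_{(\sqrt 2,1)}\, 1_- P^+ = \tau_{(\sqrt 2,1)} P^+ = \tau P^+ = \infty$ (using $\tau_{(\sqrt 2,1)} \in \tau_{(\sqrt 2,1)} P^+ = \tau P^+$). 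The content is the generic value $\tau(g) = g_l^{-1}$ for $g \in G^- \setminus \{1\}$, and this is exactly the displayed computation: insert $1 = \tau_{g_r}\tau_{g_r}^{-1}$, use Lemma \ref{lem: gr oneinv} in the form $\tau_{g_r}^{-1} \cdot g_l^{-1} = (g_r)_l{}^{-1}\text{-type relation}$ — more precisely the identity $(*)$ which rewrites $\tau_{g_r}^{-1} g_- P^+$ as $(\tau_{g_r}^{-1} \cdot g_l^{-1})_- P^+$ — and then push the grade-reversing $\tau_{(\sqrt 2,1)}\tau_{g_r}$ past everything, absorbing the leftover $\tau_{g_r} = \tau_{(-\sqrt 2,1)}$-style factor into $P^+$, to land on $(g_l^{-1})_- P^+$. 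The only real bookkeeping is that $\tau_{g_r}$ and $\tau_{(\sqrt 2,1)}$ compose to a grade-preserving element, hence lie in $\Inn(G^+,G^-) \subset P^+$, so the coset is unaffected.

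The main obstacle I anticipate is not any single hard computation but getting the identity $(*)$ and the interplay between $\tau_{g_r}$, $\tau_g$, and $\tau_{(\sqrt 2,1)}$ exactly right: one must be careful that $\tau_{g_r,-} = \tau_{g,+}$ (Lemma \ref{lem: gr oneinv}), that $(g_r)_l = g$, and that expanding $\tau_{g_r}^{-1} = \exp_+(g_r)^{-1}\exp_-(g_{rl})^{-1}\exp_+(\tau_{g_r}(g_{rr})^{-1})$ produces exactly the factor needed so that $\tau_{g_r}^{-1} g_- P^+ = ((g_r)_l{}^{-1}\text{-translate of }g_l^{-1})_- P^+$; sign conventions on $\sqrt 2$ versus $-\sqrt 2$ in $\tau_{(\pm\sqrt 2,1)}$ must also be tracked. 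Once $(*)$ is in hand, the rest is formal cancellation in $G(J)$ modulo $P^+$, and the conclusion — that this permutation together with $U_\infty = G^-$ reproduces, via \cite[Section 3]{Tom06}, precisely the Moufang set $G(J)/P^+$ — follows because passing to a Moufang set from a root group and a normalizing permutation inside an ambient group always returns that ambient group's Moufang set. I would close by remarking that $\tau^2 = \mathrm{id}$ on $M$ since $\tau_{(\sqrt 2,1)}^2 \in P^+$, consistent with the order-$2$ claim, and that this matches \cite[Theorem 5.16]{Tom2019} under the dictionary between hermitian cubic norm structures and skew-dimension-one structurable algebras.
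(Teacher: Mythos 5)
Your proposal follows essentially the same route as the paper's proof: identify the permutation with left multiplication by $\tau_{(\sqrt{2},1)}$ on $G(J)/P^+ \cong G^- \cup \{\infty\}$, invoke the Moufang-set-from-a-permutation formalism of \cite[Section 3]{Tom06}, and compute the generic value $\tau(g) = g_l^{-1}$ by inserting $\tau_{g_r}\tau_{g_r}^{-1}$, applying the identity $(*)$ obtained by expanding $\tau_{g_r}^{-1}$, and absorbing the grade-preserving product of two grade-reversing elements into $P^+$. The argument is correct, and your explicit check that $\infty$ and $1$ are swapped is a minor addition the paper leaves implicit.
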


\begin{remark}
	Since the automorphism is given by $g \mapsto g_l^{-1}$, it is easy to see our construction as coming from a \textit{division pair} in the sense of \cite{Loos14, Loos15}, since $\exp_\pm(g) \mapsto \exp_\mp(g_l^{-1})$ is of order $2$.
	We remark that this result is very much expected, given \cite[Theorem 5.1.5]{Tom2019}.
	
	This gives a straightforward definition of $\tau(g) = g_l^{-1} = (g^{-1})_r$, using Equation \ref{def gr}. So, $\tau$ sends $((a,v),(u,av + v^\sharp))$ to
	\[  \left(\left(\frac{\bar{u}a - a \bar{a} a - \overline{N(v)}}{\nu(\bar{u},a,v)}, \frac{(\bar{u} - T(v,v)) v - \bar{a}v^\sharp + v \times v^\sharp}{\nu(\bar{u},a,v)}\right), (\gamma, w)\right)\]
	with \[\gamma = \frac{ \bar{u}}{\nu(\bar{u},a,v)} + \frac{2 N(v) \overline{N(v)} + 2 a \bar{a} T(v^\sharp,v^\sharp) - 2 (\bar{u} a N(v) + u \bar{a} \overline{N(v)})}{\nu(\bar{u},a,v)^2}\] and where $w$ is uniquely determined by the element having to lie in the group.
	
	We remark that a comparison between our formula for $\tau$ and the formula for $\tau$ given in \cite[Theorem 3.7.1]{muhlherr2022}, should in principle be possible. The different setups allows us to easily match terms, but making the correspondence precise is a bit harder.	
	A different and easier formula one can compare is given in \cite[Lemma 6.29]{boelaert2013moufang}.
\end{remark}

	\section{Classification of division hermitian cubic norm structures}
	
	Now, we build towards our main result: a classification of division hermitian cubic norm structures. First, we look at the degenerate case in which $N(v) = 0$ for all $v$, which corresponds to groups of type $\prescript{2}{}{A}_n$. If $N$ is not identically $0$, we are working with a hermitian cubic norm structure coming from a semisimple cubic norm structure. Using the classification of the latter class obtained by Racine \cite[Theorem 1]{Racine72}, we can build towards a classification for division hermitian cubic norm structures.
	
	\subsection{The $N = 0$ case}
	
	\begin{lemma}
		Consider a division hermitian cubic norm structure such that $N(v) = 0$ for all $v$.
		Then $v^\sharp = 0$ for all $v$ and $T(v,v) \neq 0$ for all $v \neq 0$.
		\begin{proof}
			For any $v$, we compute \[N(v + v^\sharp) = N(v) + N(v^\sharp) + \overline{N(v)} T(v,v) + T(v^\sharp,v^\sharp).\] So, we conclude that $T(v^\sharp,v^\sharp) = 0$.
			Hence $g = ((0,v^\sharp),(0,0)) \in G$ with $\nu(g) = 0$. This implies that $v^\sharp = 0$.
			
			Note that $\nu((0,v),(u,0)) = u\bar{u}$ for all $v$. Hence $u \neq 0$ when $v \neq 0$. This implies that $T(v,v) \neq 0$, since $((0,v),(0,0))$ is an element of $G$ with if $T(v,v) = 0$.
		\end{proof}
	\end{lemma}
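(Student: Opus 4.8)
The plan is to establish the two claimed facts in sequence, each time manufacturing a suitable element of $G$ whose quartic norm vanishes and then invoking the division hypothesis to force the relevant quantity to be zero.

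\medskip

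First I would compute the cubic norm $N(v + v^\sharp)$ via the degree-$3$ linearisation of $N$. Since $N$ vanishes identically, all its full linearisations vanish too, but the ``mixed'' terms are governed by axiom (\ref{ax1}) and its linearisations: we have $N^{(2,1)}(v,v^\sharp) = T(v^\sharp, v^\sharp)$ (from $T(a,b^\sharp) = N^{(1,2)}(a,b)$ with $a \leftrightarrow b$, using that $T$ is hermitian, so $\overline{N^{(1,2)}(v,v^\sharp)} = T(v^\sharp,v^\sharp)$ lies in $R$) and $N^{(1,2)}(v,v^\sharp) = T(v, v^{\sharp\sharp}) = T(v, N(v)v) = \overline{N(v)}\,T(v,v) = 0$. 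Hence the expansion collapses to $N(v + v^\sharp) = N(v) + N(v^\sharp) + \overline{N(v)}\,T(v,v) + T(v^\sharp,v^\sharp)$, and since every term on the left and the first three on the right are zero, $T(v^\sharp,v^\sharp) = 0$ for all $v$. Now consider $g = ((0, v^\sharp),(0,0))$: this lies in $G$ precisely because $0 + \bar 0 = 0\cdot\bar 0 + T(v^\sharp, v^\sharp) = 0$, and $\nu(g) = \nu(0, 0, v^\sharp) = 0 - 0 + 0 + 0 - T(v^{\sharp\sharp}, v^{\sharp\sharp})$; but $v^{\sharp\sharp} = N(v)v = 0$, so $T(v^{\sharp\sharp},v^{\sharp\sharp}) = 0$ and indeed $\nu(g) = 0$. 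By the division hypothesis $g = 1$, i.e. $v^\sharp = 0$.

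\medskip

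For the second statement, fix $v \neq 0$ and pick any $u \in K$ with $u + \bar u = T(v,v)$ (for instance $u = T(v,v)t$). Then, since $v^\sharp = 0$ by the first part, $g = ((0,v),(u, 0)) = ((0,v),(u, 0v + v^\sharp))$ lies in $G$, and $\nu(g) = \nu(u, 0, v) = u\bar u - 0 + 0 + 0 - T(v^\sharp, v^\sharp) = u\bar u$. If $T(v,v) = 0$ we could take $u = 0$, giving $\nu(g) = 0$ with $g = ((0,v),(0,0)) \neq 1$, contradicting division. Hence $T(v,v) \neq 0$ for every $v \neq 0$.

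\medskip

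I expect no serious obstacle here; the only point requiring a little care is the bookkeeping of the linearisation identities for $N$ in the first paragraph — specifically checking that the two cross terms $N^{(2,1)}(v,v^\sharp)$ and $N^{(1,2)}(v,v^\sharp)$ are exactly $T(v^\sharp,v^\sharp)$ and $\overline{N(v)}T(v,v)$ respectively — and then confirming in each case that the element written down genuinely satisfies the defining relation $u + \bar u = a\bar a + T(v,v)$ of $G$. Both are routine given the axioms and Lemma \ref{lem: def sharp}.
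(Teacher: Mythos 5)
Your proof is correct and follows essentially the same route as the paper: expand $N(v+v^\sharp)$ to get $T(v^\sharp,v^\sharp)=0$, kill $v^\sharp$ via the element $((0,v^\sharp),(0,0))$ of quartic norm zero, and then kill $T(v,v)=0$ via $((0,v),(0,0))$. The only blemish is the parenthetical justification of the cross term, where the clean statement is $N^{(2,1)}(v,v^\sharp)=N^{(1,2)}(v^\sharp,v)=T(v^\sharp,v^\sharp)$ rather than anything involving $\overline{N^{(1,2)}(v,v^\sharp)}$; the displayed expansion you arrive at is nevertheless the correct one.
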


	\begin{lemma}
		Consider a hermitian cubic norm structure $J$ over a quadratic field extension $K = R[t]/(t^2 - t + \alpha)$ such that $N(v) = 0$ and $T(v^\sharp,v^\sharp) = 0$ for all $v$.
		This hermitian cubic norm structure is division if and only if the quadratic form $q(a,v) = a\bar{a} - T(v,v)$ is anisotropic, i.e., $a\bar{a} = T(v,v)$ implies $(0,0) = (a,v)$.
		
		\begin{proof}
			We compute for $g = ((a,v), (T(v,v) + a\bar{a})t + \lambda(t - \bar{t}),  av + v^\sharp)$ that
			\begin{align*}
				\nu(g) = & \; (T(v,v) + a\bar{a})^2 t\bar{t} + \lambda(- T(v,v) - a\bar{a} - \lambda) (t - \bar{t})^2 - a\bar{a} T(v,v)\\
				= & \;  \alpha T(v,v)^2 + (2 \alpha - 1) a\bar{a} T(v,v) + \alpha (a\bar{a})^2 - (1 - 4 \alpha ) \lambda T(v,v) \\ & - (1 - 4\alpha ) \lambda a\bar{a} - (1 - 4\alpha) \lambda^2  \\
				= & \alpha (T(v,v) - a\bar{a})^2 - (1 - 4 \alpha)(a\bar{a} T(v,v) + \lambda^2 + \lambda a \bar{a} + \lambda T(v,v)).
			\end{align*}
		Using $\lambda = - T(v,v)$ shows that $T(v,v) - a\bar{a} \neq 0$ for all $a \in K$ and $v \in J$ is necessary for $\nu$ to be anisotropic.
		This shows that $q(a,v)$ being anisotropic is a necessary condition. 
		
		For the converse, we know that $g = ((a,v), (a \bar{a} t + T(v,v) \bar{t}, av + v^\sharp))$ is one-invertible for all $(a,v) \neq (0,0)$ since $\nu(g) = q(a,v)^2 \alpha$.
		Moreover, for \[g_\mu = g \cdot ((0,0),(\mu(t - \bar{t})))\] we have 
		\[ \nu(g_\mu) = - (1 - 4 \alpha) \mu^2 - (1 - 4 \alpha ) q(a,v) \mu + \alpha q(a,v)^2.\]
		This has a solution $\nu(g_\mu) = 0$ for $\mu = - q(a,v) \xi $ if and only if
		\[ \xi^2 - \xi + \alpha/(4 \alpha - 1) = 0. \]
		Now, if such a $\xi$ exists, $K$ would be isomorphic to $R[t]/(t^2 - t)$ and would not be a field extension by \cite[Lemma 4.1.6]{Michiel2025}.
		\end{proof}
	\end{lemma}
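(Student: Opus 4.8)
The plan is to prove the two directions separately, and the key computational input is already the explicit formula for $\nu(g)$ derived in the preceding lines. First I would set up the general element of $G$ with the prescribed constraint: since we are assuming $N(v)=0$ and $T(v^\sharp,v^\sharp)=0$ for all $v$, the defining condition $u+\bar u = a\bar a + T(v,v)$ means any $g\in G$ has the form $g = ((a,v),(u, av+v^\sharp))$ with $u = (a\bar a + T(v,v))t + \lambda(t-\bar t)$ for some $\lambda\in R$ (and conversely any such tuple lies in $G$, since the $\sharp$-closure conditions and the group axioms only involve data that vanishes here). Plugging $N(v)=0$, $T(v^\sharp,v^\sharp)=0$ into the definition $\nu(u,a,v) = u\bar u - a\bar a\, T(v,v) + aN(v) + \bar a\,\overline{N(v)} - T(v^\sharp,v^\sharp)$ collapses it to $\nu(g) = u\bar u - a\bar a\, T(v,v)$, and expanding $u\bar u$ using $u = (a\bar a + T(v,v))t + \lambda(t-\bar t)$ together with $t\bar t = \alpha$ and $(t-\bar t)^2 = 1-4\alpha$ gives, after collecting, exactly the displayed three-line computation ending in
\[
\nu(g) = \alpha\,\big(T(v,v) - a\bar a\big)^2 - (1-4\alpha)\big(a\bar a\, T(v,v) + \lambda^2 + \lambda a\bar a + \lambda T(v,v)\big).
\]

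For the necessity direction (division $\Rightarrow$ $q$ anisotropic), I would specialise $\lambda = -T(v,v)$, which kills the second bracket: the term $a\bar a\, T(v,v) + T(v,v)^2 - T(v,v)a\bar a - T(v,v)^2 = 0$, leaving $\nu(g) = \alpha(T(v,v) - a\bar a)^2$. Since $\alpha$ is a unit (indeed $1-4\alpha$ is a unit and over a field $\alpha\neq 0$, as $K$ being a field rules out $\alpha=0$), division forces $T(v,v) - a\bar a \neq 0$ whenever $(a,v)\neq(0,0)$; but when $(a,v)=(0,0)$ this is the trivial element $g=1$, which is excluded. Hence $q(a,v) = a\bar a - T(v,v)$ is anisotropic.

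For the converse (q anisotropic $\Rightarrow$ division), I would argue that it suffices to rule out $\nu(g)=0$ for $g\neq 1$. Write $g = g_0 \cdot ((0,0),(\mu(t-\bar t),0))$ where $g_0 = ((a,v),(a\bar a t + T(v,v)\bar t,\, av+v^\sharp))$ corresponds to $\lambda=0$; one computes $\nu(g_0) = \alpha\, q(a,v)^2$, which is a nonzero unit when $(a,v)\neq(0,0)$ by anisotropy. For the general $\lambda=\mu$ one gets $\nu(g_\mu) = -(1-4\alpha)\mu^2 - (1-4\alpha)q(a,v)\mu + \alpha q(a,v)^2$ (reading off from the boxed formula with the sign bookkeeping of $q = a\bar a - T(v,v)$). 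If $(a,v)=(0,0)$, anisotropy is vacuous there and $\nu = -(1-4\alpha)\mu^2$, forcing $\mu=0$, i.e. $g=1$. If $(a,v)\neq(0,0)$, then $q(a,v)\in R^\times$, and $\nu(g_\mu)=0$ has a root $\mu$ iff, after dividing by $q(a,v)^2$ and writing $\mu = -q(a,v)\xi$, the scalar $\xi$ satisfies $(1-4\alpha)\xi^2 - (1-4\alpha)\xi + \alpha = 0$, i.e. $\xi^2 - \xi + \alpha/(4\alpha-1) = 0$. Finally I would invoke \cite[Lemma 4.1.6]{Michiel2025}: the existence of such a $\xi\in R$ would give a ring isomorphism $K \cong R[t]/(t^2-t)$, contradicting that $K$ is a field extension. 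Hence no such $\xi$ exists, $\nu(g)\neq 0$ for all $g\neq 1$, and the structure is division.

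The only genuinely delicate points are the sign/coefficient bookkeeping in passing between $T(v,v) - a\bar a$ and $a\bar a - T(v,v)$ when substituting into the quartic-norm expansion, and the clean identification of the quadratic-in-$\xi$ equation with the one whose solvability is governed by \cite[Lemma 4.1.6]{Michiel2025}; both are routine once the boxed formula for $\nu(g)$ is in hand, so I expect no real obstacle — the substance of the argument is entirely carried by that explicit expansion together with the field-extension criterion for $K$.
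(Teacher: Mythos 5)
Your proposal is correct and follows essentially the same route as the paper: the same expansion of $\nu(g)$ in terms of $\lambda$, the same specialisation $\lambda=-T(v,v)$ for necessity, and the same reduction of the converse to the non-solvability of $\xi^2-\xi+\alpha/(4\alpha-1)=0$ via the field-extension criterion. The only slip is cosmetic: your base point $g_0=((a,v),(a\bar a\,t+T(v,v)\bar t,\,av+v^\sharp))$ corresponds to $\lambda=-T(v,v)$, not $\lambda=0$, in the earlier parametrisation, but since you then re-parametrise by $\mu$ (and correctly note that every element of $G$ with first coordinate $(a,v)$ is some $g_\mu$) this does not affect the argument.
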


We conclude with the following proposition:

	\begin{proposition}
		Consider a hermitian cubic norm structure $J$ over a quadratic field extension $K/R$ and let $n(a) = a\bar{a}$ for $a \in K$. The following are equivalent:
		\begin{itemize}
			\item $N(v) = 0$ for all $v$ and $J$ is division,
			\item $N(v) = 0, v^\sharp = 0$ for all $v$,  and the map $(a,v) \mapsto n(a) - T(v,v)$ is anisotropic.
		\end{itemize}
		Moreover, any vector space $M$ over a quadratic field extension $K/R$ equipped with a hermitian map $h$ such that $(a,v) \mapsto n(a) - h(v,v)$ is anisotropic induces a division hermitian cubic norm structure with $m^\sharp = 0$ and $N(m) = 0$ for all $m \in M$.
	\end{proposition}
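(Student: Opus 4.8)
The plan is to deduce this proposition almost entirely from the two preceding lemmas, treating it as a packaging statement. First I would observe that the equivalence of the two bullet points is immediate: the first lemma shows that $N\equiv 0$ together with $J$ division forces $v^\sharp=0$ for all $v$ (and also $T(v,v)\neq 0$ for $v\neq 0$, which is subsumed by anisotropy of $(a,v)\mapsto n(a)-T(v,v)$), and the second lemma, once one records that $N\equiv 0$ and $v^\sharp=0$ automatically give $T(v^\sharp,v^\sharp)=0$, shows that in this situation $J$ is division precisely when $q(a,v)=a\bar a - T(v,v)=n(a)-T(v,v)$ is anisotropic. So each bullet implies the other via these two lemmas; I would just spell out that the hypotheses of the second lemma ($N(v)=0$ and $T(v^\sharp,v^\sharp)=0$ for all $v$) are met in both cases.

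For the ``moreover'' part I need to start from raw data — a $K$-vector space $M$ with a hermitian form $h$ such that $(a,v)\mapsto n(a)-h(v,v)$ is anisotropic — and manufacture a hermitian cubic norm structure. The construction is forced: set $N\equiv 0$, $m^\sharp = 0$ for all $m$, take $\times$ to be identically zero (which is consistent with $v\times v = 2v^\sharp = 0$), and take $T = h$. I would then verify the four axioms. Axioms (\ref{ax1}) and (\ref{ax2}) read $T(a,b^\sharp)=N^{(1,2)}(a,b)$ and $(a^\sharp)^\sharp = N(a)a$; with everything on the right vanishing and $b^\sharp = 0$, $a^\sharp=0$, these hold trivially. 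Axiom (\ref{ax3}), $(a^\sharp\times b)\times a = \overline{N(a)}b + T(b,a)a^\sharp$, has left side $0$ and right side $0$ since $N(a)=0$ and $a^\sharp = 0$. Axiom (\ref{ax4}), $N(T(a,b)a - a^\sharp\times b) = N(a)^2\overline{N(b)}$, has right side $0$ and left side $N(T(a,b)a) = T(a,b)^3 N(a) = 0$. One also needs to check that $\sharp$ is $K$-anti-quadratic, i.e. $(km)^\sharp = \bar k^2 m^\sharp$, which is trivial as both sides are $0$, and that $T$ is hermitian, which is the hypothesis on $h$; these identities must hold under all scalar extensions, but since $M\otimes L$ is again a vector space over the field $K\otimes L$ (here I use that $K/R$ is a quadratic field extension, so $K\otimes L$ behaves suitably, or just that the identities are formal) and $h$ extends to a hermitian form, the same trivial verification goes through. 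Hence $(M,0,0,h)$ is a hermitian cubic norm structure; it satisfies $N(v)=0$, $v^\sharp=0$ for all $v$, and $(a,v)\mapsto n(a)-h(v,v)$ anisotropic, so the already-proved equivalence applies to conclude it is division.

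The only genuine subtlety — and the step I expect to require the most care — is the scalar-extension bookkeeping in the ``moreover'' part: the axioms are required to hold over all $L[t]/(t^2-t+\alpha)$, so I should note that $N$, $\sharp$, $\times$ stay identically zero after base change and $h$ base-changes to a hermitian form, so the trivial verifications above are stable. I would also remark explicitly that the $R$-quadratic map $\sharp$ has linearisation $\times = 0$, so the bundled data is internally consistent. Everything else is a direct quotation of the two preceding lemmas, so the proposition follows; I would keep the write-up to a short paragraph invoking those lemmas plus the one-line axiom check.
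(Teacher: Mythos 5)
Your proposal is correct and follows essentially the route the paper intends: the paper states this proposition without proof as a direct consequence of the two preceding lemmas (the first giving $v^\sharp=0$ and the second giving the anisotropy criterion), and your packaging of those lemmas together with the trivial axiom check for the ``moreover'' construction $(M,0,0,h)$ is exactly what is needed. The scalar-extension remark and the consistency check $v\times v=2v^\sharp=0$ are appropriate and complete the argument.
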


	\begin{remark}
		\label{rem: inv in degen case easy}
		In case that $N(v) = 0$ and $v^\sharp = 0$ for all $v$, we can consider the structurable algebra as part of the class of structurable algebras defined from a hermitian form. Namely, let $A$ be an associative $R$-algebra, $M$ be an $A$-module, and $h : M \times M \longrightarrow A$ hermitian. Then $A \times M$ forms a structurable algebra with operation $(a,m)(b,n) = (ab + h(m,n),\bar{a}n + bm)$, cfr. \cite[section 5.3]{OKP} (to switch from the left action we use here to the right action needed and the left and right hermitian forms use $\bar{a}n = na$ and $h(a,b) = \tilde{h}(b,a)$). We use the action $(k,v) \mapsto \bar{k} v$ to embed our structurable algebra into this class, which is allowed since $K$ is commutative.
		
		Using the representation of the Lie algebra established in \cite[section 5.3]{OKP} it is not too hard to show that $((a,m), (u, \bar{a}m))$ (with $u + \bar{u} = a\bar{a} + h(m,m)$) is one-invertible if and only if $u - h(m,m)$ is invertible (under the assumption that $w - \bar{w}$ is invertible for some $w$). We explain this in quite a bit more detail in Appendix \ref{app: B}.
		Now, applying this with $m = 0$ shows that $A$ is a division algebra if all elements (except $(0,0) \in G$) are one-invertible.
		We know that $u - h(m,m) + \overline{u - h(m,m)} = a\bar{a} - h(m,m)$ so that we obtain the same quadratic form as before. If $a\bar{a} = h(m,m)$, one can choose $u = h(m,m)$ and obtain an element in the group that is not one-invertible.
	\end{remark}

	\subsection{The $N \neq 0$ case}

	\begin{lemma}
		Consider a division hermitian cubic norm structure $J$ over $K/R$ such that $N(v) \neq 0$ for some $v$.
		The associated split hermitian cubic norm structure can be obtained from a semisimple cubic norm structure.
		\begin{proof}
			By taking the quadratic extension $L$ such that $L \otimes K \cong L[s]/(s^2 - s)$, one sees that $N(sv) = s N(v) \neq 0$ while $g_{s,v} = ((0,sv),(0,\bar{s} v^\sharp))$ is not invertible since $\nu(g_{s,v}) = 0$. Now, \cite[Theorem 4.1.7, Lemma 1.1.10]{Michiel2025} shows that $sJ$ (and $\bar{s}J$) forms a cubic norm structure $J'$. Moreover, $Q_{g_{s,v}}$ is an isotopy $\bar{s}J \longrightarrow sJ$.
			
			The semisimplicity follows from \cite[34.23 and 39.1]{Skip2024}, given that $T(sv,\cdot) = T(sv^\sharp,\cdot) = 0$ and $N(sv) = 0$ cannot happen all at the same time, and that $L$ is a field.
		\end{proof}
	\end{lemma}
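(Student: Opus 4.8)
\emph{Proof proposal.} The plan is to reduce the statement to the classical structure theory of cubic norm structures by splitting the quadratic algebra $K$. First I would pass, faithfully flatly, to a field $L \supseteq R$ with $L \otimes_R K \cong L[s]/(s^2 - s)$ (for instance $L = K$ already works, or one uses \cite[Theorem 4.1.7]{Michiel2025}); writing $s, \bar s = 1 - s$ for the two orthogonal idempotents, the associated split hermitian cubic norm structure decomposes as $J_L = sJ_L \oplus \bar s J_L$, with $\sharp$ interchanging the two summands. Since $N(v) \neq 0$ for some $v$ and $K$ is a field by Lemma~\ref{lem: div implies K field}, $N(v)$ is a unit of $K$, so $N(sv) = sN(v)$ is a nonzero element of the component ring $sL[s]/(s^2-s) \cong L$; hence $sv$ is an invertible element of $sJ_L$. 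I would then invoke \cite[Lemma 1.1.10]{Michiel2025}: together with the isotopy $Q_{g_{s,v}} \colon \bar s J_L \to sJ_L$ attached to $g_{s,v} = ((0,sv),(0,(sv)^\sharp)) \in G$, it transports $N, \sharp, T$ to a genuine cubic norm structure $J'$ on $sJ_L$ and identifies the whole split hermitian cubic norm structure with the one built from the single cubic norm structure $J'$.

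It then remains to prove that $J'$ is semisimple. Over a field this is governed by the relevant non-degeneracy criterion of \cite[34.23 and 39.1]{Skip2024}: $J'$ is semisimple precisely when it has no nonzero element $w$ with $N(w) = 0$, $T(w, \cdot) \equiv 0$ and $T(w^\sharp, \cdot) \equiv 0$. Writing such a hypothetical $w$ as $w = sv$, the three conditions give in particular $T(sv, sv) = 0$ and $T((sv)^\sharp,(sv)^\sharp) = 0$, so that $((0,sv),(0,(sv)^\sharp))$ lies in $G$ and the quartic norm formula yields $\nu = - T((sv)^\sharp,(sv)^\sharp) = 0$; a nonzero such $w$ thus produces a nontrivial non-one-invertible element of $G$, which I want to turn into a contradiction with the hypothesis that $J$ is division.

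The subtle point — and the main obstacle — is exactly this last transfer: the division hypothesis is given over $R$, whereas the offending element just constructed lives over the splitting field $L$, and division does not survive arbitrary base change (indeed $g_{s,v}$ itself is never one-invertible over $L$). To handle this I would show that the radical of $J'$, being an ideal, descends: combined with the hermitian structure and the isotopy $Q_{g_{s,v}}$ it cuts out a $K$-submodule of $J$ whose nonzero elements $z$ satisfy $N(z) = 0$, $T(z, \cdot) \equiv 0$ and $T(z^\sharp, \cdot) \equiv 0$ already over $R$; then $((0,z),(0,z^\sharp)) \in G$ is a nontrivial element with $\nu = 0$, contradicting division over $R$ directly. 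One may alternatively track which polynomial identities a radical element forces and use faithful flatness of $R \to L$, or feed the descended element into Proposition~\ref{prop: g one inv iff nu inv with ass} after controlling the one-generator substructure via Lemma~\ref{lem: N on basis}. Once $J'$ is known to be semisimple it falls under Racine's classification \cite[Theorem~1]{Racine72}, which is the input needed for the ensuing classification in the $N \neq 0$ case.
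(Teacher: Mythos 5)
Your proposal follows essentially the same route as the paper: split $K$ over a quadratic extension $L$ (indeed $L=K$ works since $1-4\alpha$ is invertible), transport the data to a cubic norm structure on $sJ_L$ via the isotopy $Q_{g_{s,v}}$, and rule out a nonzero radical element by descending it to an element $z \in J$ with $N(z)=0$, $T(z,\cdot)=0$, $T(z^\sharp,\cdot)=0$, whence $((0,z),(0,z^\sharp))\in G$ has $\nu = -T(z^\sharp,z^\sharp)=0$ over $R$, contradicting the division hypothesis. The only difference is that you make explicit the descent from $L$ back to $R$ (correctly noting that the radical conditions are vacuous for the quartic norm over $L$ alone), a step the paper compresses into the bare assertion that $T(sv,\cdot)=T(sv^\sharp,\cdot)=0$ and $N(sv)=0$ cannot hold simultaneously.
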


	\begin{remark}
		\label{rmk: semisimple cns classification}
		Given the relation with semisimple cubic norm structures $B$, it is useful to note that these are classified \cite[Theorem 39.6]{Skip2024}. Namely, either the norm is anisotropic, $B \cong H_3(C,\Gamma)$ (the hermitian $3 \times 3$-matrices over a composition algebra $C$ with respect to a certain involution), or $J$ is not simple and is as in \cite[Exercise 34.24]{Skip2024}. Moreover, over an algebraically closed field, the only division hermitian cubic norm structure is the field itself with norm $x \mapsto x^3$.
	\end{remark}

	We write $Q(K, \alpha)$ for a quaternion algebra. Here, $K$ is a quadratic étale extension of $R$ and $\alpha$ is an element of $R$.
	This quaternion algebra is an associative algebra $K \oplus Kj$ with subalgebra $K$, a generator $j$ such that $j^2 = \alpha$, and $k j = j \bar{k}$ for $k \in K$. This algebra comes with a standard involution $k + k'j \mapsto \bar{k}- k'j$. We remark that any quaternion algebra is of this form.

	\begin{proposition}
		
		Consider a division hermitian cubic norm structure $J$ over $K/R$ such that $N(v) \neq 0$ for some $v$ for which the associated cubic norm structure is not simple. 		
		Then, there exists a $w \in J$ such that
		\begin{itemize}
			\item $w^\sharp = 0, T(w,w) \neq 0$, and hence the quaternion algebra $Q = Q(K, T(w,w))$ is contained in the structurable algebra (using a non-standard involution)
			\item $W^\perp = \{ v \in J | T(v,w) = 0 \}$ is a $Q$-module.
			\item $(v,v') \mapsto h(v,v') = T(v',v) + v \times v' \in Q$ for all $v,v'\in W^\perp$ is $Q$-hermitian.
			\item $K \times J \cong Q \times W^\perp$ as structurable algebras, where the latter algebra has $(q,v) (q', v') = (qq' + h(v,v'), \bar{q} v' + q'v)$ as multiplication. 
		\end{itemize}
		Moreover, consider any quaternion algebra $Q = Q(K,\alpha)$, the generator $w$ such that $w^2 = \alpha$, and the unique involution  of $Q$ such that $w \mapsto w$ and $k \mapsto \bar{k}$ for $k \in K$.
		Let $M$ be a $Q$-module endowed with an anti-quadratic map $q : M \longrightarrow Kw$, satisfying $q(w \cdot m) = \alpha q(m)$, and let $T$ be the unique map $T : M \times M \longrightarrow K$ satisfying $T(a, w \cdot b)w  = q(a,b)  \alpha$ for all $a,b \in M$. Then $Q \times M$ defines a structurable algebra of a division hermitian cubic norm structure if and only if $(q,v) \mapsto q\bar{q} - T(v,v) - q(v,v)$ is anisotropic. 
		Furthermore, any division hermitian cubic norm structure satisfying the conditions of this proposition is of this form.
		\begin{proof}
			Because of \cite[Theorem 39.6]{Skip2024}, we know that the associated cubic norm structure is of the form $F \times J(M,q,e)$. To be precise, if we take the quadratic extension $L$ such that $K \otimes L \cong L[s]/(s^2 - s)$, we have that $(sJ, \bar{s} J)$ is a cubic norm pair by \cite[Theorem 4.1.7]{Michiel2025}. Now, $N(sv) = s N(v) \neq 0$ for some $v$. Note that $s N(v) = s l$ for a unique $l \in L$. Hence, we can apply \cite[Lemma 1.1.10]{Michiel2025} to obtain a cubic norm structure (over $L$) with operations $(N',\sharp', T')$ such that $N'(sx) sl = s N(x)$, $l (sx)^{\sharp'} = Q_{sv} (sx)^\sharp$, $s l^2 T'(sx,sy) = T((sx),Q_{\bar{s}v^\sharp} (sy))$ where $N', \sharp',$ and $T'$ coincide with the operators of \cite[34.24]{Skip2024}.

			First, it is useful to establish some additional relations between the operations on the hermitian cubic norm structure and the associated cubic norm structure. We have
			\begin{equation}
				l (Q_{sv} \bar{s} x)^{\sharp'} = Q_{sv} (Q_{sv} \bar{s}x)^\sharp = Q_{sv} Q_{\bar{s} v^\sharp} sx^\sharp = N(sv)^2 x^\sharp = l^2 s x^\sharp
			\end{equation} and
			\begin{equation}
				T'(sx, Q_{sv} y) = T(sx,\bar{s} y).
			\end{equation}

			We want to show that the $w$ with $Q_{sv} w = 1 \in F \subset F \times J(M,q,e)$ satisfies the desired properties.
			First and foremost, we observe that $(Q_{sv} w)^{\sharp'} = 0$, which implies $w^\sharp = 0$, and that $0 = N(Q_{sv} w) = s^2 N(v)^2 \overline{N(w)}$, which shows $N(w) = 0$. Given that we work in a division hermitian cubic norm structure, this immediately implies that $T(w,w) = \alpha \neq 0$. We immediately see that $K \oplus Kw$ is a quaternion algebra with involution $(k,k'w) \mapsto (\bar{k}, k'w)$.
		
			Secondly, any $y$ such that $T(y,w) = 0$ satisfies $T'(sy, Q_{sv} w) = 0$. This guarantees that the element $Q_{sv}y^\sharp = l(sy)^{\sharp'} \in \langle Q_{sv}w \rangle$ and thus $y^\sharp \in \langle w \rangle$.

			Thirdly, we verify whether $W^\perp$ is a $Q$-module. The only possible action with the required isomorphism of structurable algebras is $(k,k'w)u = \bar{k}u + \bar{k'} (w \times u)$ for $(k,k'w) \in K \oplus Kw$ and $u \in W^\perp$.
			This is an action if $w \times (w \times u) = T(w,w) u$ for all $u$, since
			\[ (l,l'w)((k,k'w) u) = (l, l'w)(\bar{k}u + \bar{k'} w \times u) = \overline{lk} u + (\overline{lk'} + k\bar{l}') w \times u + k'\bar{l'} w \times (w \times u)\]
			while
			\[  (lk + l'\bar{k'}T(w,w), (lk' + \bar{k}l')w)u = \overline{(lk + l'\bar{k'}T(w,w)}u + (\overline{lk'} + k\bar{l'}) w \times u. \]
			To verify that $w \times (w \times u) = T(w,w) u$, we first take the anti-quadratic form $f$ on $W^\perp$ such that $u^\sharp = f(u) w$. For any $u$ in $W^\perp$, we know that
			\[ (f(u) w) \times (w \times u) = u^\sharp \times (u \times w) = N(u) w + T(w,u^\sharp) u = \overline{f(u)} T(w,w) u,\]
			so that the desired equality holds whenever $f(u)$ is invertible. Applying \cite[Proposition 12.24]{Skip2024} yields the desired result, since $f(u) = 0$ for all $u$ implies that $N(u + kw) = 0$ for all $u$ and $k$ and thus $N(v) = 0$ for all $v$.
			
			The final property we have to verify is that $h$ is hermitian. It is obviously $K$-antilinear in the second argument and $\overline{h(a,b)} = h(b,a)$.
			So, we only have to check whether $h(w \times a,b) = w h(a,b)$. 
			We have $T(v', v \times w) = T(w, v \times v')$ which equals $w \cdot (v \times v')$. To verify that $(w \times v) \times v' =w T(v',v) = T(v,v') w$, we use that $(w \times v) \times v'$ is necessarily of the form $\lambda w$ for some $\lambda \in K$ and obtain $\lambda T(w,w) = T((w \times v) \times v', w) = T((w \times v) \times w, v') = T(w,w) T(v,v')$.
			 
			 For the moreover-part, compare with Remark \ref{rem: inv in degen case easy} to see that the described algebra is division. This is spelled out in more detail in Appendix \ref{B3}.
		\end{proof}
	\end{proposition}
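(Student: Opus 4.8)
The plan is to break the proposition into its two halves. The first, structural half asserts that a division hermitian cubic norm structure with $N \not\equiv 0$ whose associated cubic norm structure is not simple is isomorphic, as a structurable algebra, to $Q \times W^\perp$ with the stated hermitian form; the second, converse half asserts that every $Q \times M$ constructed from an anti-quadratic $q : M \to Kw$ and the associated trace $T$ yields a division hermitian cubic norm structure precisely when the quadratic form $(q,v) \mapsto q\bar{q} - T(v,v) - q(v,v)$ is anisotropic. I would handle the first half by exploiting the scalar extension $L$ with $K \otimes L \cong L[s]/(s^2-s)$, so that $(sJ, \bar{s}J)$ becomes a cubic norm pair and, by \cite[Theorem 39.6]{Skip2024} together with \cite[34.24]{Skip2024}, this cubic norm structure is $F \times J(M,q,e)$. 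The key move is to transport the distinguished idempotent: take $w$ with $Q_{sv} w = 1 \in F$, and then verify $w^\sharp = 0$ and $N(w) = 0$ using the isotopy relations $l(sx)^{\sharp'} = Q_{sv} x^\sharp$ and the norm-multiplicativity already recorded in the preceding lemma. Division then forces $T(w,w) = \alpha \neq 0$, giving the quaternion subalgebra $K \oplus Kw$ with its non-standard involution.

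Next I would establish that $W^\perp = \{v : T(v,w) = 0\}$ is stable under the only candidate $Q$-action $(k + k'w)\cdot u = \bar{k}u + \bar{k'}(w \times u)$; this reduces to the single identity $w \times (w \times u) = T(w,w)\, u$ for $u \in W^\perp$. I would prove this by introducing the anti-quadratic form $f$ on $W^\perp$ with $u^\sharp = f(u) w$ (legitimate since $y^\sharp \in \langle w\rangle$ whenever $T(y,w)=0$, itself a consequence of $Q_{sv}y^\sharp = l(sy)^{\sharp'} \in \langle Q_{sv}w\rangle$), verifying the identity whenever $f(u) \in K^\times$ via axiom (\ref{ax3}) applied to $u^\sharp \times (u \times w)$, and then invoking \cite[Proposition 12.24]{Skip2024} to pass to all $u$; the degenerate case $f \equiv 0$ is excluded because it would force $N \equiv 0$ on $J$, contradicting $N(v) \neq 0$. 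An analogous \cite[Proposition 12.24]{Skip2024} argument, starting from $(w \times v) \times v' = \lambda w$ and pairing with $w$ via the trace symmetry $T(a, b \times c) = T(b, a \times c)$, shows $h(w\times a, b) = w\, h(a,b)$, hence $h$ is $Q$-hermitian; combined with the multiplication formula for the associated structurable algebra this gives the isomorphism $K \times J \cong Q \times W^\perp$.

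For the converse half, the cleanest route is to reuse the machinery of Remark \ref{rem: inv in degen case easy}: the algebra $Q \times M$ with multiplication $(q,v)(q',v') = (qq' + h(v,v'), \bar{q}v' + q'v)$ — where now $h(v,v') = T(v',v) + q(v,v')$ takes values in $Q = K \oplus Kw$ rather than in $K$ — falls squarely within the class of structurable algebras built from an associative algebra and a hermitian module (here $Q$ is associative and $M$ is a $Q$-module via the prescribed action, with $q(w\cdot m) = \alpha q(m)$ ensuring compatibility). I would then apply the one-invertibility criterion for that class — that $((a,m),(u,\bar{a}m))$ is one-invertible iff $u - h(m,m)$ is invertible — exactly as in Remark \ref{rem: inv in degen case easy}, noting that $u - h(m,m) + \overline{u - h(m,m)} = a\bar{a} - h(m,m) = q\bar{q} - T(m,m) - q(m,m)$ is the relevant trace, so that anisotropy of this quadratic form is equivalent to every nontrivial element of $G$ being one-invertible, i.e., to the structure being division. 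The ``furthermore'' clause then follows by combining the first half (every such division structure has the stated shape) with the fact that the reconstruction data $(Q, M, q, T)$ is determined by the structure. The main obstacle I anticipate is bookkeeping: keeping the non-standard involution on $Q$, the twist between left and right module actions (the $\bar{k}u$ versus $ku$ issue flagged in Remark \ref{rem: inv in degen case easy}), and the precise normalization of $T$ versus $q$ consistent, so that the general hermitian-module one-invertibility result genuinely applies; the bulk of this is deferred to Appendix \ref{B3}.
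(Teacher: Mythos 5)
Your proposal follows the paper's own proof essentially step for step: the same scalar extension to split $K$, the same use of Racine's classification and the idempotent $w$ with $Q_{sv}w = 1$, the same reduction of the $Q$-module claim to $w \times (w \times u) = T(w,w)u$ proved via the form $f$ and Zariski density, the same trace-symmetry argument for hermitianity, and the same appeal to the hermitian-form one-invertibility criterion (Remark \ref{rem: inv in degen case easy} and Appendix \ref{B3}) for the converse. The only cosmetic difference is attributing the identity $u^\sharp \times (u \times w) = N(u)w + T(w,u^\sharp)u$ to axiom (\ref{ax3}) rather than to the $(3,1)$-linearisation of (\ref{ax2}); this does not affect correctness.
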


	\begin{remark}
		If the $Q(K,\alpha)$-module is trivial, we have $N(v) = 0$ for all $v$. Hence, we place the structurable algebras of the previous theorem for which the module is trivial in the class of hermitian cubic norm structures coming from a hermitian form $T$.
	\end{remark}

		\begin{theorem}
			\label{thm: possible hcns}
			Suppose that $J$ is a division hermitian cubic norm structure over $K$. Define the norm $n(k) = k\bar{k}$ on $K$. Now, one of the following holds:
			\begin{itemize}
				\item $J^\sharp = 0$, in which case it is obtained from a hermitian form $T$ such that $(a,v) \mapsto n(a) - T(v,v)$ is anisotropic,
				\item the associated cubic norm structure is semisimple, in which case either
				\begin{itemize}
					\item the associated cubic norm structure is not simple and $J \cong K w \oplus M$ with $M$ a non-trivial $Q(K,\alpha)$-module ($Q(K,\alpha) = K \oplus K w$ with $w^2 = \alpha$) endowed with an anti-quadratic map $M \longrightarrow Kw$ satisfying $q(w \cdot m) = \alpha q(m)$, such that the map \[((k, k'w), m) \mapsto k\bar{k}+ k'\bar{k'} - T(q(m, w \cdot m),w)/ \alpha - q(m,m)\] is anisotropic,	
					\item the associated cubic norm structure is simple and:
					\begin{itemize}			
					\item either, $N$ is anisotropic and $ \dim_K J \ge 2$ and $J$ does not correspond to a purely inseparable field extension of degree $3$ over a field of characteristic $3$,
					\item or the associated cubic norm structure is isomorphic to $H_3(C,\Gamma)$ for a composition algebra $C$ of dimension $>1$.
				\end{itemize}
				\end{itemize}
			\end{itemize}			
		\end{theorem}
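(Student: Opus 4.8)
The plan is to simply assemble the three preceding propositions, noting that they already exhaust the possibilities. First, I would invoke the dichotomy $J^\sharp = 0$ versus $J^\sharp \neq 0$, which is the primary case split in the statement. In the case $J^\sharp = 0$, note that $v^\sharp = 0$ for all $v$ forces $N(v) = 0$ for all $v$ as well (since $N(v)v = (v^\sharp)^\sharp = 0$, and one can separate the scalar by a suitable extension using Lemma~\ref{lem: div implies K field} and the fact that $K$ is a field), so the first proposition of the $N = 0$ subsection applies and gives precisely the description via the anisotropic hermitian form $(a,v) \mapsto n(a) - T(v,v)$.

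If $J^\sharp \neq 0$, then $N$ is not identically zero: indeed, if $N(v) = 0$ for all $v$ the first lemma of the $N = 0$ subsection would force $v^\sharp = 0$ for all $v$, contradicting $J^\sharp \neq 0$. So $N(w) \neq 0$ for some $w$, and the lemma ``the associated split hermitian cubic norm structure can be obtained from a semisimple cubic norm structure'' applies — this is where the semisimplicity assertion in the statement comes from. Now split on whether that semisimple cubic norm structure is simple. If it is not simple, the immediately preceding proposition gives the description $J \cong Kw \oplus M$ with $M$ a nontrivial $Q(K,\alpha)$-module and the stated anisotropy condition; the translation of the anisotropic form $(q,v) \mapsto q\bar q - T(v,v) - q(v,v)$ into the coordinates $((k,k'w),m)$ is routine once one uses $T(q(m,w\cdot m),w) = \alpha\, T(m,m)$ (the normalization built into $T$ in that proposition), so $-T(q(m,w\cdot m),w)/\alpha = -T(m,m)$ and $q\bar q = k\bar k + k'\bar{k'}$ for $q = k + k'w$ under the standard norm on $Q(K,\alpha)$.

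If the semisimple cubic norm structure is simple, I would use Remark~\ref{rmk: semisimple cns classification} (Racine's classification, via \cite[Theorem 39.6]{Skip2024}): over a field, a simple semisimple cubic norm structure either has anisotropic norm or is $H_3(C,\Gamma)$ for a composition algebra $C$. In the $H_3(C,\Gamma)$ branch, the condition $\dim C > 1$ is forced: if $C$ were $1$-dimensional, the cubic norm structure would be split, hence the quartic norm would be isotropic (it always is in the split case, as the split hermitian cubic norm structure comes from a cubic norm pair over $K$ with isotropic elements), contradicting ``division''. In the anisotropic-norm branch, the condition $\dim_K J \geq 2$ rules out the case $J = K$ with $N(x) = x^3$ — but wait, that is actually a division hermitian cubic norm structure, so I need to be careful: the excluded case is rather $J$ corresponding to a purely inseparable cubic field extension in characteristic $3$ (where $\sharp$ is built from a Frobenius-type squaring), and $\dim_K J \geq 2$ together with anisotropy of $N$ must be checked to be compatible with divisionality. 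The main obstacle I anticipate is precisely this last point: showing that anisotropy of the cubic norm $N$ on a simple cubic norm structure over $K$ is actually equivalent to (not merely necessary for) the quartic norm $\nu$ being anisotropic, and pinning down exactly which degenerate subcase — the purely inseparable degree-$3$ extension in characteristic $3$ — must be excluded. This likely requires a direct computation of $\nu$ in terms of the cubic norm data, showing $\nu((a,v),(u,av+v^\sharp))$ reduces (up to the $\lambda^2(1-4\alpha)$ term and a norm from $K$) to an expression controlled by $N(v)$ and $T(v,v), T(v^\sharp,v^\sharp)$, so that its isotropy over $K$ is governed by isotropy of $N$ over $K$; the characteristic-$3$ inseparable case is where $N$ anisotropic over the base does not survive anisotropy after the relevant scalar manipulations. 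Everything else is bookkeeping across the already-established propositions.
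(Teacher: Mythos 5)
The routine part of your proposal --- assembling the dichotomy $J^\sharp=0$ versus $N\not\equiv 0$ and citing the three preceding propositions together with Remark~\ref{rmk: semisimple cns classification} --- is exactly what the paper does, and your observation that $J^\sharp=0$ forces $N\equiv 0$ over the field $K$ is fine. The gap is in the exclusions, which you correctly identify as the crux but do not actually prove, and where your sketched strategy goes wrong in two places. First, your argument for excluding $H_3(C,\Gamma)$ with $\dim C=1$ --- ``the cubic norm structure would be split, hence the quartic norm would be isotropic'' --- conflates $J$ with its scalar extension: the associated cubic norm structure lives over the splitting extension $L$, while $J$ over $K/R$ is a twisted form, and isotropy of $\nu$ after extending scalars says nothing about isotropy over $R$ (indeed the whole semisimplicity lemma works by exhibiting non-one-invertible elements only after extension). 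Second, your aside that $J=K$ with $N(x)=x^3$ ``is actually a division hermitian cubic norm structure'' is false and is refuted explicitly in the paper: the remark following the definition of $\nu$ exhibits $\nu((\lambda,-\lambda),(2\lambda^2,2\lambda^2))=0$, so this structure is anisotropic but not division. Consequently your attempt to decide which degenerate case is excluded by which clause (``$\dim_K J\ge 2$'' versus ``not purely inseparable of degree $3$ in characteristic $3$'') is muddled --- both are separate exclusions --- and the ``direct computation of $\nu$ in terms of the cubic norm data'' that you defer to is never carried out.

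The paper's actual route to these exclusions is entirely different and is the reason the proof is postponed until after the algebraic-group lemmas: one shows (Lemma~\ref{lem: rk 1} and its sequels) that $G(J)$ is an adjoint simple group of relative rank one, and then the forbidden cases are eliminated by the classification of Tits indices --- a rank-one form of $F_4$ has anisotropic kernel of type $B_3$ rather than the $C_3$ that $H_3(F,\Gamma)$ would produce, and there is no rank-one form of $G_2$ (which is what $\dim_K J=1$ and the purely inseparable degree-$3$ case would yield). Without either that argument or a genuine computation exhibiting isotropic vectors of $\nu$ in each excluded case, the theorem as stated is not established.
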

	
	The theorem stated follows obviously from the preceding propositions and lemmas, except the additionally imposed restrictions, i.e., $\dim_K J \neq 1$, $J$ is not a purely inseperable field extension, and the associated norm structure not being isomorphic to $H_3(C,\Gamma)$ for a $1$-dimensional composition algebra $C$. Therefore, we shall prove this theorem after establishing the link with algebraic groups that allows us to explain why these restrictions apply. We remark that we only excluded finite dimensional cubic norm structures.

	We shall compare the projective elementary group $G(J) = \langle G^+, G^- \rangle$, or more precisely its Zariski-closure (the minimal closed subgroup functor of $\Aut(L)$ containing $G^+$ and $G^-$), to specific algebraic groups for finite dimensional hermitian cubic norm structures $J$. In what follows, we shall also write $G(J)$ for the Zariski-closure. This group is closed, smooth, and connected \cite[Ex
	position VIB, Proposition 7.1,(i) and Corollary 7.2.1]{demazure1970proprietes}.

	\begin{lemma}
		\label{lem: rk 1}
		The element $\psi(\lambda)$, for invertible $\lambda$, that acts on the Lie algebra as $z \mapsto \lambda^i z$ whenever $z \in L_i$, is contained in $G(J)$. This $\psi$ defines a maximal split torus of $G(J)$ whenever $J$ is a division hermitian cubic norm structure.
		\begin{proof}
			The construction of $\psi(\lambda)$ was performed in Proposition \ref{prop: g one inv iff nu inv with ass} using the $\beta(\lambda,\mu)$.
			Suppose that $\phi$ defines a morphism of the multiplicative group into $G(J)$ and that $\phi(\mu)$ commutes with $\psi(\lambda)$ for all $\lambda$ and $\mu$.
			This defines another $\mathbb{Z}$-grading on our Lie algebra and both gradings are compatible, i.e., define a $\mathbb{Z}^2$-grading. Assume that $L_2$ is $i$-graded in the new grading induced by $\phi$. Now, take nonzero $z \in L_1$ which is $j \neq i/2$-graded. We can therefore find $\tilde{z} \in G^+$ such that $\Psi(\lambda) \tilde{z} \Psi(\lambda)^{-1} = (\lambda^j \cdot \tilde{z})$; since each $g \in G^+$ with $g_1 = \ad \; z$ can be used to compute $\tilde{z}$ using $\psi(\lambda ) g \psi(\lambda^{-1}) = (\lambda^j \cdot \tilde{z}) ((0,0),(\lambda^i s, 0))$.
			
			Now, we see that \[\lambda^{4j} \nu(\tilde{z}) = \nu(\lambda^j \cdot \tilde{z}) = \nu(\psi(\lambda)
			\tilde{z} \psi(\lambda^{-1}))) = \lambda^{2i} \nu(\tilde{z}) \]
			using that $\nu$ is a quartic norm and using Lemma \ref{lem: def nu}, combined with the fact that $L_{-2}$ is $-i$-graded, to compute $\nu(\lambda^j \cdot \tilde{z})$. So, $L_1$ is $j/2$-graded.
			Arguing analogously one can show that $L_{-1}$ is $-j/2$-graded. This shows that $\phi(\lambda) = \psi(\lambda)^{j/2}$. Hence, $\psi$ defines a maximal split torus.
		\end{proof}
	\end{lemma}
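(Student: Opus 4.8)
The plan is to extract $\psi$ from elements of $G(J)$ that have already been constructed, and then to compare the resulting grading with any hypothetically larger torus. For the first assertion, recall from the proof of Proposition~\ref{prop: g one inv iff nu inv with ass} the elements $\beta(\mu,\lambda)$, defined whenever $1-\mu\lambda$ is invertible as a product of four elements of the form $\exp_\pm(0,r(t-\bar t))\in G^\pm$, which act on $L_i$ by multiplication by $(1-\mu\lambda)^i$. For a unit $\sigma$, taking $\lambda=1$ and $\mu=1-\sigma$ makes $1-\mu\lambda=\sigma$, so $\beta(1-\sigma,1)$ acts exactly as $\psi(\sigma)$; hence $\psi(\sigma)\in G(J)$. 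The four factors depend on $\sigma$ through scalars that are rational functions of $\sigma$ with denominator a power of $\sigma$, so this is valid over every base change and $\psi$ is a morphism of group schemes $\mathbb{G}_m\to G(J)$; it is a homomorphism because $\sigma\mapsto(z\mapsto\sigma^i z\text{ on }L_i)$ is one inside $\Aut(L)$, and it is injective because $\psi(\sigma)$ acts on $L_1$ as $\sigma$. Thus $\psi(\mathbb{G}_m)$ is a split one-dimensional subtorus of $G(J)$.

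For maximality it suffices to show that every cocharacter $\phi\colon\mathbb{G}_m\to G(J)$ commuting with $\psi$ is a power of $\psi$, since then a torus containing $\psi(\mathbb{G}_m)$ can have no further cocharacters and must equal $\psi(\mathbb{G}_m)$. Given such a $\phi$, the cocharacters $\phi$ and $\psi$ diagonalise $L$ simultaneously, refining the $\mathbb{Z}$-grading to a $\mathbb{Z}^2$-grading; write $i$ for the $\phi$-degree of $L_2$ and $i'$ for that of $L_{-2}$. I would first deduce $i'=-i$ from the identity $[(t-\bar t)_2,[(t-\bar t)_{-2},x_1]]=((1-4\alpha)x)_1$ noted after the construction of $L$: choosing $x_1\in L_1$ nonzero and $\phi$-homogeneous of degree $j$, the left-hand side has $\phi$-degree $i'+i+j$ while the right-hand side, a nonzero scalar multiple of $x_1$, has $\phi$-degree $j$, forcing $i+i'=0$. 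Next, for any nonzero $\phi$-homogeneous $z\in L_1$ of degree $j$, I would pick $g\in G^+$ with $g_1=\ad z$, correcting its degree-two part by a factor $\exp_+(0,r(t-\bar t))$ so that $g$ is $\phi$-homogeneous, and evaluate the quartic norm of the conjugate $\phi(\lambda)g\phi(\lambda)^{-1}$ in two ways: on the one hand this conjugation rescales the $L_1$-data of $g$ by $\lambda^j$, so $\nu(\phi(\lambda)g\phi(\lambda)^{-1})=\lambda^{4j}\nu(g)$ by homogeneity of $\nu$; on the other hand, by Lemma~\ref{lem: def nu} together with $i'=-i$, it rescales the $L_{-2}\to L_2$ block by $\lambda^{i-i'}=\lambda^{2i}$, so $\nu(\phi(\lambda)g\phi(\lambda)^{-1})=\lambda^{2i}\nu(g)$. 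Since $J$ is division and $z\neq0$, $\nu(g)$ is a unit, whence $4j=2i$ and $j=i/2$; thus $L_1$ sits in $\phi$-degree $i/2$ (so $i$ is even).

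Finally, transporting this along $[(t-\bar t)_2,y_{-1}]=((t-\bar t)y)_1$ and using that $t-\bar t$ is a unit of $K$, one gets that $L_{-1}$ sits in $\phi$-degree $-i/2$; since $L$ is generated by $L_1$ and $L_{-1}$ the $\phi$-grading is then forced everywhere ($L_0$ in degree $0$, $L_{\pm2}$ in degree $\pm i$, consistent with $i'=-i$), so the $\phi$-grading equals $(i/2)$ times the $\psi$-grading and $\phi=\psi^{i/2}$, which finishes the maximality. The two steps I expect to require care are obtaining $i'=-i$ in a characteristic-free way --- the Killing form pairs $L_2$ with $L_{-2}$ but can be degenerate in small characteristic, so the explicit bracket identity above is the safer route --- and the bookkeeping for the non-uniqueness of the element $g\in G^+$ with $g_1=\ad z$, whose degree-two correction term $\exp_+(0,r(t-\bar t))$ has to be carried along when the quartic norm is applied. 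The division hypothesis is used only through the single fact that $\nu(g)$ is invertible, which is precisely what makes the exponent comparison $4j=2i$ legitimate.
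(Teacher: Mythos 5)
Your proof is correct and follows essentially the same route as the paper: extract $\psi$ from the $\beta(\mu,\lambda)$ of Proposition \ref{prop: g one inv iff nu inv with ass}, then show any commuting cocharacter $\phi$ is a power of $\psi$ by computing $\nu(\phi(\lambda)g\phi(\lambda)^{-1})$ two ways to force $4j=2i$. You fill in two details the paper leaves implicit --- deriving $i'=-i$ from the bracket identity $[s_2,[s'_{-2},x_1]]=(s(s'x))_1$, and normalising the central $\exp_+(0,r(t-\bar t))$ ambiguity in the lift $g$ of $\ad z$ --- and both fills are sound.
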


	Now, we will link these hermitian cubic norm structures to groups with a certain Tits index. The possible Tits indices and the meaning of these indices are listed in explained in \cite{TitsIndex}.

	\begin{lemma}
		Suppose that $J$ over $K/R$ is a finite dimensional division hermitian cubic norm structure and $J^\sharp = 0$. We have that $G(J)$ is an adjoint linear algebraic group of type $\prescript{2}{}{A}_{n + 2,1}$ with $n = \dim_K J$.
		\begin{proof}
			By extending to the algebraic closure, it is not hard to see that one obtains a group of type $A_{n+1}$. Lemma \ref{lem: rk 1} shows that we obtain a rank one group. It is also not hard to see that the Galois involution of $K/R$ acts non-trivially on the roots.
			
			Namely, over $\Phi[s]/(s^2 - s)$ with $\Phi$ algebraically closed. One can realize $G^+$ as the upper diagonal block-matrices in $\mathbf{Sl}_{n+2}(\bar{\Phi})$ (the blocks correspond to acting on $\bar{\Phi} \oplus \bar{\Phi}^n \oplus \bar{\Phi})$). One can also realize $G^+$ certain upper diagonal block-matrices over $\mathbf{Sl}_{n+2}(\bar{\Phi}[s]/(s^2 - s))$ that interact in a certain way with the involution $s \mapsto 1 -s$ (in line with the construction employed in \cite[Lemma 3.30]{OKP}), which makes it easy to observe the action of the Galois involution on the roots.   
		\end{proof}
	\end{lemma}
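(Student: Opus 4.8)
The plan is to reduce, by the classification already obtained for the case $\sharp=0$, to a structurable algebra built from a hermitian form, and then to read off the Tits index from an explicit split model of the $5$-graded Lie algebra $L$. Concretely, the hypotheses ($J^\sharp=0$ and $J$ division) force, by the Proposition characterising the $\sharp=0$ division hermitian cubic norm structures in the subsection ``The $N=0$ case'', that $N\equiv 0$, that $v^\sharp=0$ for all $v$, and that $q(a,v)=n(a)-T(v,v)$ is anisotropic; equivalently $\langle 1\rangle\perp(-T)$ is an anisotropic hermitian form of $K$-rank $n+1$. By Remark~\ref{rem: inv in degen case easy} the associated structurable algebra is the one attached to the hermitian module $(J,T)$ over $K$. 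A direct computation (or the split model below) gives $\dim_R\mathfrak{instr}(J)=(n+1)^2+1$, so that $\dim_R L=1+(2n+2)+\big((n+1)^2+1\big)+(2n+2)+1=(n+3)^2-1$.

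To determine the absolute type I would base change along any extension of $R$ splitting $K$ — for instance $R'=K$, since $K/R$ is separable, or an algebraic closure $\overline{R}$. Over such an $R'$ one has $K\otimes_R R'\cong R'\times R'$, so $J\otimes_R R'=J_+\oplus J_-$ splits into the two idempotent eigenspaces, each $R'$-free of rank $n$ and put into perfect duality by $T$. Writing $W=R'\oplus J_+$, one gets $A\otimes_R R'\cong W\oplus W^\ast$ and an identification $L\otimes_R R'\cong\mathfrak{sl}(R'\oplus W\oplus R')=\mathbf{Sl}_{n+3}$ under which the $\mathbb{Z}$-grading is the one induced by the cocharacter $\mathrm{diag}(t,1_{n+1},t^{-1})$: here $L_{\pm 2}$ is the one-dimensional corner, $L_{\pm 1}\cong W\oplus W^\ast$ is the hook, and $L_0$ is the block-diagonal Levi subalgebra of dimension $(n+1)^2+1$. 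This is the standard construction of the $5$-graded Lie algebra attached to a hermitian-form structurable algebra, and it can be arranged in the matrix form of \cite[Lemma~3.30]{OKP}. Under this identification, the dimension count $\dim G=2n+3=\dim R_u(P)$ together with the injectivity of $\exp_\pm$ shows that $G^+$ and $G^-$ are the unipotent radicals $R_u(P),R_u(P^-)$ of the two opposite maximal parabolics attached to this cocharacter. Working at the level of group schemes, the usual commutator relations in $\mathfrak{sl}_{n+3}$ recover every root vector of the Levi from those of $R_u(P)$ and $R_u(P^-)$, so $\langle R_u(P),R_u(P^-)\rangle$ is the full adjoint group $\mathbf{PGl}_{n+3}$; hence $G(J)\otimes_R\overline{R}=\mathbf{PGl}_{n+3}$. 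Combined with the smoothness and connectedness of $G(J)$ recorded just before the lemma, $G(J)$ is an adjoint, absolutely simple $R$-group of absolute type $A_{n+2}$.

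It remains to pin down the relative rank and the $\ast$-action. By Lemma~\ref{lem: rk 1}, $\psi$ is a maximal split torus of $G(J)$ over $R$, of rank $1$; hence the relative rank is $1$. (This is consistent with the anisotropy of $\langle 1\rangle\perp(-T)$: the rank-$(n+3)$ hermitian space whose adjoint unitary group is $G(J)$ then has Witt index exactly $1$.) Finally, on the split form over $\overline{R}$ the Galois involution of $K/R$ interchanges the eigenspaces $J_+\leftrightarrow J_-$, i.e.\ it interchanges $W$ and $W^\ast$, and therefore induces the nontrivial (transpose--inverse) diagram automorphism of $A_{n+2}$; the matrix model of \cite[Lemma~3.30]{OKP} exhibits this action transparently. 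Putting this together, $G(J)$ is an adjoint group of type $\prescript{2}{}{A}_{n+2,1}$, as claimed.

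I expect the main obstacle to be bookkeeping inside the split model: arranging the $\mathbf{Sl}_{n+3}$-matrix realisation of $L$ so that the Lie bracket visibly matches the $\mathfrak{sl}_{n+3}$-bracket, so that $G^\pm$ are visibly $R_u(P^\pm)$, and so that the Galois twist is visibly transpose--inverse, together with the explicit evaluation $\dim_R\mathfrak{instr}(J)=(n+1)^2+1$. None of this is conceptually hard, but it is where the computation lives.
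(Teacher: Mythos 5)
Your proposal is correct and follows essentially the same route as the paper: split $K$ over the algebraic closure, realize $G^{\pm}$ as the unipotent radicals of opposite $(1,n+1,1)$-block parabolics in a special linear group, invoke Lemma \ref{lem: rk 1} for relative rank one, and observe that the Galois involution of $K/R$ swaps the two idempotent halves and hence acts as the nontrivial diagram automorphism. Note that your indices ($\mathbf{Sl}_{n+3}$, absolute type $A_{n+2}$, Levi of dimension $(n+1)^2+1$) are the ones consistent with the stated Tits index $\prescript{2}{}{A}_{n+2,1}$ and with $\dim_R G^{+}=2n+3$, whereas the paper's proof writes $\mathbf{Sl}_{n+2}$ and $A_{n+1}$, which appears to be an off-by-one slip.
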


	\begin{lemma}
		Suppose that $J$ over $K/R$  is a hermitian cubic norm structure for which the associated cubic norm structure is semisimple but not simple, i.e., is a finite dimensional division hermitian cubic norm structure coming from a quaternion algebra and a non-trivial module for that algebra. The group $G(J)$ is an adjoint linear algebraic group of type $\prescript{1,2}{}{D}_{n + 3,1}$ with $2n + 1= \dim_K J$ and $n \ge 1$.
		\begin{proof}
			The associated cubic norm structure (over the algebraic closure) is described in remark \cite[Remark 2.5.23]{muhlherr2022} (the $B$ case cannot happen since $\dim_K(J)$ is necessarily odd) and one obtains $D_{n+3}$. The relative rank is necessarily $1$.
			
			To exclude $\prescript{3,6}{}{D}_{4,1}$ note that the quadratic extension that splits the hermitian cubic norm structure yields a group of type $\prescript{3,6}{}{D}_{4,2}$ (as $2$ and $3$ are prime), which implies that the associated cubic norm structure is anisotropic.
		\end{proof}
	\end{lemma}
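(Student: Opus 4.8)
The plan is to determine the absolute type of $G(J)$ by extending scalars to an algebraic closure, then to pin down the relative type from the rank-one split torus of Lemma~\ref{lem: rk 1}, and finally to eliminate a possible triality twist by descending along the quadratic extension that splits $K$.

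Extending scalars to an algebraic closure $\Phi$ of $R$, the étale algebra $K$ splits and, exactly as in the previous lemma, the hermitian cubic norm structure becomes the split cubic norm pair attached to a semisimple cubic norm structure. In the non-simple case treated here the preceding proposition already gives $J \cong Kw \oplus M$ with $M$ a nontrivial $Q(K,\alpha)$-module; since $Q(K,\alpha)$ is free of rank $2$ over $K$, the $K$-dimension of $M$ is even, so $\dim_K J = 2n+1$ is odd and $n \ge 1$. Over $\Phi$ this is the reducible semisimple cubic norm structure $F \times J(M,q,e)$ of \cite[Remark 2.5.23]{muhlherr2022}, and oddness of $\dim_K J$ rules out the $B$-type alternative listed there, so the associated $5$-graded Lie algebra $L$ is of absolute type $D_{n+3}$; as in the previous lemma, $G(J)$ is the corresponding adjoint group. (The dimension $\dim_R L = 2 + 2\dim_R A + \dim_R \mathfrak{instr}(J)$ also pins down which $D$, if one prefers a direct count.)

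By Lemma~\ref{lem: rk 1} the cocharacter $\psi$ is a maximal split torus of $G(J)$, so $G(J)$ has relative rank $1$. Combined with the absolute type $D_{n+3}$, and recalling that a triality twist is possible only for $D_4$, this leaves exactly four candidates: $\prescript{1}{}{D}_{n+3,1}$, $\prescript{2}{}{D}_{n+3,1}$, and — only when $n=1$ — $\prescript{3}{}{D}_{4,1}$ or $\prescript{6}{}{D}_{4,1}$. Hence the lemma holds for $n \ge 2$, and for $n=1$ it remains to exclude the trialitarian indices $\prescript{3,6}{}{D}_{4,1}$.

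For that, base change to the quadratic field extension $L/R$ with $K \otimes_R L \cong L \times L$. Since $[L:R]=2$ is coprime to $3$, the image of the absolute Galois group of $R$ in the automorphism group $S_3$ of the Dynkin diagram of $D_4$ keeps its $3$-part, so $G(J)_L$ is still of trialitarian type $\prescript{3}{}{D}_{4,r}$ or $\prescript{6}{}{D}_{4,r}$. But over $L$ the attached cubic norm structure is the \emph{reducible} one $F \times J(M,q,e)$: the grading of Lemma~\ref{lem: rk 1} descends and already gives relative rank $1$, while the idempotent splitting of the direct factor $F$ forces a second independent split direction, so the relative rank over $L$ is at least $2$; as $\prescript{3,6}{}{D}_4$ has relative rank at most $2$, this gives $G(J)_L \cong \prescript{3,6}{}{D}_{4,2}$, the quasi-split trialitarian form, whose triviality of anisotropic kernel forces the very cubic norm structure $F \times J(M,q,e)$ to be anisotropic, contradicting its reducibility. (Equivalently and more directly: since $F$ is a proper direct factor of degree strictly smaller than $J(M,q,e)$ once $\dim M \ge 2$, every automorphism preserves the decomposition, so the automorphism and structure groups, hence $G(J)_L$, are of inner or $\prescript{2}{}{D}$ type and carry no order-$3$ outer part — impossible for $\prescript{3,6}{}{D}_4$.) Therefore no triality twist occurs and $G(J)$ is of type $\prescript{1,2}{}{D}_{n+3,1}$. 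The absolute-type and relative-rank steps are routine given the previous lemma and the preceding proposition; I expect the real work to be in this last step, namely checking that the rank-one torus of Lemma~\ref{lem: rk 1} (and the grading) genuinely survive the base change to $L$, and turning the sketch that a reducible semisimple cubic norm structure over $L$ cannot underlie a trialitarian $D_4$ into a precise argument.
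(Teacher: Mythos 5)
Your proposal is correct and follows essentially the same route as the paper: absolute type $D_{n+3}$ via \cite[Remark 2.5.23]{muhlherr2022} with the type-$B$ case excluded by the oddness of $\dim_K J$, relative rank $1$ from the maximal split torus, and exclusion of triality by base-changing along the quadratic extension splitting $K$, where coprimality of $2$ and $3$ preserves the trialitarian character, the rank rises to $2$, and the resulting $\prescript{3,6}{}{D}_{4,2}$ would force the associated cubic norm structure to be anisotropic, contradicting the reducibility of $F \times J(M,q,e)$. The extra details you supply (why $\dim_K J$ is odd, that only $n=1$ is at issue) are consistent with, and slightly more explicit than, the paper's argument.
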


	\begin{lemma}
		Suppose that $J$ over $K/R$ is a division hermitian cubic norm structure with $N$ not identically zero, with associated cubic norm structure of the form $H_3(C,\Gamma)$. Then $G(J)$ is an adjoint linear algebraic group of type $\prescript{2}{}{E_{6,1}^{35}}$, $\prescript{}{}{E}_{7,1}^{66}$, or $\prescript{}{}{E}_{8,1}^{133}$.
		\begin{proof}
			The associated cubic norm structure is of the form $H_3(C,\Gamma)$. Over algebraically closed fields, $G(J)$ is the split adjoint group of type $F_4$, $E_6$, $E_7$ or $E_8$, depending on the dimension of $C$. So, $G(J)$ is a rank one algebraic group of type $F_4$, $E_6$, $E_7$ or $E_8$ corresponding to whether $C$ has dimension one, two, four, or eight. The anisotropic kernels of these groups (not over the algebraic closure), are groups of type $C_3$, $A_5$, $D_6$ and $E_7$.
			Now, there cannot be any division hermitian cubic norm structure of type $F_4$, since any rank one group of type $F_4$ has $B_3$ as anisotropic kernel.
			
			To observe the aforementioned types are correct, note that over the algebraic closure the anisotropic kernel becomes isomorphic the group of \cite[Theorem 6.4]{Loos_1979} for the associated cubic norm pair, using \cite[Corollary 2.3.7]{Michiel2025}. The dimensions of associated the Lie algebra uniquely determine the type of the anisotropic kernel, except in the $C_3$ case. Using the functorial construction of \cite{Jacobson68} of the Lie algebra (which can be carried over to the group), one can verify that $C_3$ is the correct type. Now, one can now recognize $G(J)$ as the Chevalley group of the correct type (acting on the Chevalley Lie algebra modulo its center), the root system is obtained by adding the usual $\mathbb{Z}$-grading to the grading obtained from a maximal torus of the (now split) anisotropic kernel\footnote{The link between these cubic norm structures and groups of type $F_4, E_6, E_7,$ and $E_8$ is well-known. See, for example, \cite[Theorem 2.5.22]{muhlherr2022}. The argumentation here, with the anisotropic kernel does 2 things: showing that we have the right anisotropic kernel and illustrating that one obtains the full Chevalley group over the algebraic closure.}.
		\end{proof}
	\end{lemma}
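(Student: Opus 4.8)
The plan is to view $G(J)$ as an $R$-form of a fixed split adjoint group and to pin the form down by computing its relative rank and its anisotropic kernel. First I would base change to an algebraic closure $\bar R$ of $R$ (note that $R$ is a field in the division case). Over $\bar R$ the quadratic étale algebra $K$ splits, the hermitian cubic norm structure becomes a split object whose associated cubic norm pair is built from $H_3(\bar C,\Gamma)$ for the split composition algebra $\bar C$ with $\dim_R C\in\{1,2,4,8\}$, and the associated $\mathbb Z$-graded Lie algebra $L\otimes\bar R$ becomes the split simple Lie algebra of type $F_4$, $E_6$, $E_7$ or $E_8$ respectively; this is the classical $5$-graded construction attached to a cubic norm structure, see e.g. \cite[Theorem 2.5.22]{muhlherr2022}. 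Since $G(J)$ is smooth, connected and closed and is generated by the root groups $G^\pm$, it follows that $G(J)\times_R\bar R$ is the split adjoint Chevalley group of that type, so $G(J)$ is an $R$-form of the adjoint group of one of these four types. By Lemma \ref{lem: rk 1} the cocharacter $\psi$ is a maximal split torus of dimension one, so the relative rank of $G(J)$ is exactly $1$, and the quadratic extension $K/R$ entering the construction forces the $E_6$-case, if it occurs, to be the outer form $\prescript{2}{}{E_6}$.

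The decisive step is to identify the anisotropic kernel, that is, the semisimple anisotropic part of the centralizer $Z_{G(J)}(\psi)$. This centralizer is the grade-preserving subgroup $\Inn(G^+,G^-)$, which acts on $L_0=\mathfrak{instr}(J)$; after base change to $\bar R$ it becomes, modulo the central split torus $\psi$, isogenous to the structure group of the associated cubic norm pair via \cite[Theorem 6.4]{Loos_1979} together with \cite[Corollary 2.3.7]{Michiel2025}. Comparing the dimensions of the relevant Lie algebras as functions of $\dim_K J$ pins the semisimple anisotropic part down to type $A_5$, $D_6$ and $E_7$ (of dimensions $35$, $66$ and $133$) in the $E_6$, $E_7$ and $E_8$ cases. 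In the $F_4$ case the dimension count cannot separate $B_3$ from $C_3$, since $\dim\mathfrak{so}_7=\dim\mathfrak{sp}_6$, so there I would instead use the explicit functorial construction of the Lie algebra from a cubic Jordan algebra (Jacobson, \cite{Jacobson68}) to read off that the anisotropic kernel has type $C_3$. But every rank-one group of type $F_4$ has anisotropic kernel of type $B_3$, this being the unique rank-one $F_4$ Tits index, which is incompatible with $C_3$; hence no division hermitian cubic norm structure with $N$ not identically zero has associated cubic norm structure $H_3(C,\Gamma)$ with $\dim C=1$, and the $F_4$ case does not arise.

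For the three remaining cases, the combination ``relative rank one together with the prescribed anisotropic kernel'' singles out exactly the Tits indices $\prescript{2}{}{E_{6,1}^{35}}$, $E_{7,1}^{66}$ and $E_{8,1}^{133}$ in Tits' classification \cite{TitsIndex}. To conclude that $G(J)$ is the adjoint form and not another isogeny quotient, I would exhibit it concretely as the Chevalley group of the appropriate type acting on its Lie algebra modulo its centre --- equivalently, reconstruct the root datum of $G(J)$ by superposing the $\mathbb Z$-grading coming from $\psi$ onto the grading obtained from a maximal torus of the (now split) anisotropic kernel.

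The step I expect to be the main obstacle is the $F_4$ discrimination between $B_3$ and $C_3$: dimensions do not decide it, so one genuinely needs the explicit Lie-algebra model, and one must be careful that the identification of the anisotropic kernel with the structure group of the cubic norm pair is faithful enough to transport root-datum information and not merely isogeny type. Keeping careful track of centres and isogeny is also precisely what lets one assert that the group is adjoint rather than merely of the correct type.
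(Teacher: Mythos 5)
Your proposal is correct and follows essentially the same route as the paper: base change to the algebraic closure to identify the split type from $\dim C$, use Lemma \ref{lem: rk 1} for relative rank one, identify the anisotropic kernel via \cite[Theorem 6.4]{Loos_1979} and \cite[Corollary 2.3.7]{Michiel2025} with the dimension count deciding all cases except $B_3$ versus $C_3$, resolve that case via Jacobson's functorial construction, exclude $F_4$ because its only rank-one index has kernel $B_3$, and recognize the adjoint Chevalley group by superposing the gradings. The only differences are that you make explicit a few points the paper leaves implicit (e.g.\ $\dim\mathfrak{so}_7=\dim\mathfrak{sp}_6$ as the reason dimensions fail there, and the outer-form observation for $E_6$).
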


	\begin{lemma}
		Suppose that $J$ over $K/R$ is a division hermitian cubic norm structure for which the associated cubic norm structure is simple. Over the algebraic closure, it is either the split hermitian cubic norm structure obtained from $H_3(C,\Gamma)$, in which case $G(J)$ has type $\prescript{2}{}{E_{6,1}^{35}}$, $\prescript{}{}{E}_{7,1}^{66}$, or $\prescript{}{}{E}_{8,1}^{133}$, or it has dimension $3$ and $G(J) \cong \prescript{3,6}{}{D}_{4,1}$.
		\begin{proof}
			Over the algebraic closure, the hermitian cubic norm structure is either of the first type and we an apply the analysis of the previous lemma.
			If it is not, we can apply \cite[Theorem 46.8]{Skip2024}, to see that the associated cubic norm structure can, firstly, be a form of the Freudenthal algebra $F \oplus F \oplus F$ for a field $F$, which corresponds to groups of type $D_4$. The division requirement yields a group of type $\prescript{3,6}{}{D}_{4,1}$. To exclude $\prescript{1,2}{}{D}_{4,1}$, note that the quadratic extension that splits the hermitian cubic norm structure creates a group of relative rank at least $2$. Using this for $\prescript{1,2}{}{D}_{4,1}$, one can show that $N$ is isotropic and $J \ncong H_3(C,\Gamma)$. Hence, $J$ comes from the construction involving a quaternion algebra and is not simple.

			If the cubic norm structure is not of these forms, \cite[Theorem 46.8]{Skip2024} says that the cubic norm structure is a field $F$ with norm $N(x) = x^3$ over $F$ or over a subfield $G$ over which $F$ is purely inseperable. However, these possibilities cannot occur.
			Namely, for $F$ over $F$ we obtain a group of type $G_2$ which has either rank $2$ or $0$. In the case of $F$ over $G$ we obtain a subgroup of $G_2(F)$ with a two dimensional $G$-torus.
		\end{proof}
	\end{lemma}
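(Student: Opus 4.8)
The plan is to base change to the algebraic closure $\bar R$ and read off the possibilities from the classification of simple cubic norm structures. By Lemma~\ref{lem: div implies K field}, $K$ is a quadratic field extension of $R$, so $\bar R\otimes_R K\cong\bar R\times\bar R$, and, exactly as in the preceding lemmas, \cite[Theorem 4.1.7]{Michiel2025} together with \cite[Lemma 1.1.10]{Michiel2025} turns the associated cubic norm structure, over $\bar R$, into an honest cubic norm structure $B_{\bar R}$, still simple. By \cite[Theorem 46.8]{Skip2024}, $B_{\bar R}$ is then a form of $F\oplus F\oplus F$, a form of $H_3(C,\Gamma)$, or (possible only in characteristic $3$) lives on a field $F$ with $N=x^3$, possibly viewed over a subfield over which $F$ is purely inseparable. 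This last family I would discard on rank grounds: if $B_{\bar R}\cong F$ with $N=x^3$ then $G(J)$ is, over $\bar R$, the split group of type $G_2$, which admits no Tits index of relative rank one, contradicting Lemma~\ref{lem: rk 1}; in the purely inseparable sub-case one similarly finds a two-dimensional $G$-torus inside a copy of $G_2$, incompatible with $G(J)$ being a connected reductive group of relative rank exactly one (Lemma~\ref{lem: rk 1} and the remark preceding the statement). Hence $B_{\bar R}$ is a form of $H_3(C,\Gamma)$ or a form of $F\oplus F\oplus F$.

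Suppose first that $B_{\bar R}$ is a form of $H_3(C,\Gamma)$; over the algebraically closed field $\bar R$ the composition algebra $C$ is split of dimension $1,2,4$ or $8$, and by the previous lemma $G(J)$ is over $\bar R$ the split adjoint group of type $F_4$, $E_6$, $E_7$ or $E_8$, with anisotropic kernel a form of $C_3$, $A_5$, $D_6$ or $E_7$ respectively. The dimension-one case cannot occur, because every rank-one form of $F_4$ has anisotropic kernel of type $B_3$, not $C_3$. In the remaining cases $G(J)$ is of type $\prescript{2}{}{E_{6,1}^{35}}$, $E_{7,1}^{66}$ or $E_{8,1}^{133}$ --- the outer form for $E_6$ being forced by $K\neq R$ (Lemma~\ref{lem: div implies K field}), and the superscripts $35,66,133$ being the dimensions of the anisotropic kernels $A_5$, $D_6$, $E_7$. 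This is the first alternative.

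Suppose instead that $B_{\bar R}$ is a form of $F\oplus F\oplus F$; then $B_{\bar R}\cong\bar R^3$, so the associated cubic norm structure, and hence $J$, has dimension $3$ over $K$, and over $\bar R$ the group $G(J)$ is the split adjoint group of type $D_4$. Being of relative rank one, its Tits index is one of $\prescript{1}{}{D}_{4,1}$, $\prescript{2}{}{D}_{4,1}$, $\prescript{3}{}{D}_{4,1}$, $\prescript{6}{}{D}_{4,1}$ (distinguished vertex the branch node, anisotropic kernel a nine-dimensional form of $A_1^3$). It remains to exclude $\prescript{1,2}{}{D}_{4,1}$. For this I would use that base changing to the quadratic field $K$ splits the hermitian cubic norm structure, so that over $K$ one obtains (the cubic norm pair of) a cubic norm structure and $G(J)$ acquires relative rank at least two; matching this against the $D_4$ Tits indices, one checks that a group of type $\prescript{1,2}{}{D}_{4,1}$ could gain that much rank over a quadratic extension only if, after base change, $N$ became isotropic and $J\ncong H_3(C,\Gamma)$ --- but then the structural propositions of this section realize $J$ via a hermitian form over a quaternion algebra, whose associated cubic norm structure is not simple, contradicting the hypothesis. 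Hence only $\prescript{3,6}{}{D}_{4,1}$ remains, which is the second alternative.

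I expect the real work to be concentrated in this last step: tracking the $\ast$-action of the Galois group on the Dynkin diagram of a rank-one $D_4$-form under the quadratic splitting extension $K/R$, and translating the resulting combinatorics back into the assertions that $N$ becomes isotropic and that $J$ is of quaternionic type. The characteristic-$3$ exclusions also require some care, since one must rule out not just the split group of type $G_2$ but also the non-smooth subgroup attached to a purely inseparable cubic field extension.
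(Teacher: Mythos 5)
Your proposal is correct and follows essentially the same route as the paper: base change to the algebraic closure, invoke the classification of simple cubic norm structures, discard the $G_2$/purely-inseparable cases on rank grounds, handle $H_3(C,\Gamma)$ via the previous lemma (including the $B_3$-versus-$C_3$ exclusion of $F_4$), and rule out $\prescript{1,2}{}{D}_{4,1}$ by the rank-gain-over-$K$ argument forcing a quaternionic, hence non-simple, structure. The points you flag as needing real work (the Galois action on the $D_4$ diagram and the characteristic-$3$ exclusions) are precisely the steps the paper also leaves at sketch level.
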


	\begin{proof}[Proof of Theorem \ref{thm: possible hcns}]
		\label{prf: thm possible hcns}
		In the previous lemma we excluded the division hermitian cubic norm structures we excluded in the theorem. In the lemma before that we excluded $H_3(F,\Gamma)$ over a field $F$ as a possible associated hermitian cubic norm structure for composition algebras $F$ of degree $1$. This was everything we had to exclude.
	\end{proof}	
	
		\begin{remark}
			All infinite dimensional hermitian cubic norm structures can be thought of as being contained in the classes $\prescript{2}{}{A}_{n + 2,1}$, $\prescript{1}{}{D}_{n,1}$, and $\prescript{2}{}{D}_{n,1}$, since they all come from certain anisotropic hermitian forms $J \times J \longrightarrow K$ or $M \times M \longrightarrow Q$.
		\end{remark}
	
		\begin{remark}
			Whether the projective elementary group associated to the hermitian cubic norm structure is precisely equal to the associated algebraic group $\mathbf{G}$ associated to it, evaluated in $R$, is related to the Kneser-Tits conjecture. A review paper on this conjecture is \cite{Gille09}. One can restate the problem as in \cite{Alsaody21TW}: the conjucture asks whether for a simply connected isotropic $\mathbf{G}$ over $R$ the abstract group $\mathbf{G}(R)$ coincides with the subgroup generated by the unipotent radicals of the minimal parabolic $R$-subgroups of $G$. For a lot of groups, Tits proved the affirmative \cite{Tits78}. However, the conjecture does not hold for all groups. Over global fields, one can use \cite[Théorème 8.3]{Gille09} to see that it holds. In \cite[Theorem 8.1]{Alsaody21TW}, the authors, for example, show that groups of type $E^{78}_{7,1}$ satisfy the conjecture.
		\end{remark}
	
		\subsection{The precise link with rank one algebraic groups}
		
		Now, we want to show that each group with one of the Tits indices that one could obtain, comes from an hermitian cubic norm structure. To achieve this, we want to use \cite[Theorem 4.2.5]{Michiel2025}, which speaks of operator Kantor pairs. So, first we shall establish how one obtains an operator Kantor pair from such a group. We will not state a definition, however we shall identify the underlying algebraic data one needs to define an operator Kantor pair in Remark \ref{rem: def okp}.
		Then, we shall look at isotopes of the structurable algebra and corresponding operator Kantor pairs. Thereafter, we are ready for the final result.
		
		\begin{lemma}
			\label{lem: descr okp}
			Suppose that $G$ is a simple linear algebraic group with one of the aforementioned Tits indices. Consider opposite parabolics and their unipotent radicals $(U^+,U^-)$. These unipotent radicals form an operator Kantor pair.
			\begin{proof}
				Consider the action of $G$ on $\Lie(G)$. Now, one can apply \cite[Theorem 3.27]{OKP} to obtain the desired result. So, we will explain why the conditions to apply that theorem hold.
				
				The opposite parabolics correspond to the grading induced by the rank one torus contained in them. With respect to this grading, one has $\Lie(U^\pm) = U^\pm_1 \oplus U^\pm_2$ where $U^\pm_i$ has weight $\pm i$. This makes it easy to see them as proper vector groups.
				The conditions on $\exp(o_{i,j}(x,y))$ are easily proven using the grading induced by the rank one torus contained in the opposite parabolics. The other conditions follow easily from the fact that $U^\pm_2$ is one dimensional.
			\end{proof}
		\end{lemma}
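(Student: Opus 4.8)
The plan is simply to supply and check the hypotheses of \cite[Theorem 3.27]{OKP}: that theorem produces an operator Kantor pair out of a pair of opposite \emph{proper vector groups} sitting inside a linear algebraic group that acts on a five-graded Lie algebra, together with the associated operator data, and the content of the proof is to recognise $(U^+,U^-)$ as such a pair. So essentially all the work is bookkeeping relative to the relative root datum of $G$.

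First I would fix a minimal parabolic $P^+$, an opposite minimal parabolic $P^-$, the maximal split torus $S \subset P^+ \cap P^-$, and a cocharacter $\lambda : \mathbb{G}_m \to S$ pairing positively with the relative simple root. Conjugation by $\lambda(\mathbb{G}_m)$ on $\Lie(G)$ gives a $\mathbb{Z}$-grading $\Lie(G) = \bigoplus_i \mathfrak{g}_i$; since each of the Tits indices in the list has relative root system of rank one (of type $A_1$ or $BC_1$), one has $\mathfrak{g}_i = 0$ for $|i| > 2$, so that $\Lie(G)$ is five-graded and $\Lie(U^{\pm}) = \mathfrak{g}_{\pm 1} \oplus \mathfrak{g}_{\pm 2}$. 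I would also record here the one fact the rest leans on: for every index in the list, $\mathfrak{g}_{\pm 2}$ is one-dimensional. This is read off Tits' tables \cite{TitsIndex} index by index (it is the reason the relative root group $U_{2\alpha}$ has dimension one in each case), and it is exactly what makes the remaining axioms cheap.

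Next, the proper-vector-group condition. Over the ground field, $U^+$ is a split unipotent group, two-step nilpotent, with central subgroup $U^+_2 := U_{2\alpha}$ and vector-group quotient $U^+_1 := U^+/U^+_2$, with $\lambda$ acting with weight $1$ on $U^+_1$ and weight $2$ on $U^+_2$; choosing a section identifies $U^+$ with the affine space $U^+_1 \times U^+_2$ and realises $U^+$ (and symmetrically $U^-$) as a proper vector group in the sense of \cite{OKP}, with $\lambda$ inducing the grading and $\Lie(U^{\pm}_i) = \mathfrak{g}_{\pm i}$ $S$-equivariantly. The operator data are the graded components of the conjugation action of $U^+$ on $\Lie(G)$ (equivalently on $U^-$): writing out how $\exp$ of an element of $\mathfrak{g}_{\pm 1}$ moves the other graded pieces yields the maps $o_{i,j}$, and the exponential identities required by \cite[Theorem 3.27]{OKP} follow because $\Lie(G)$ has only five weights — so every such orbit map is polynomial of degree at most $4$ — and because $\Lie(G)$ is generated as a Lie algebra by $\mathfrak{g}_{\pm 1}$, which reduces each identity to a bracket computation forced by the grading and $\ad$-nilpotency. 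Finally, the remaining conditions collapse once one uses $\dim \mathfrak{g}_{\pm 2} = 1$.

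The main obstacle I anticipate is not any single computation but getting the dictionary exactly right: matching the abstract operator-Kantor-pair data of \cite{OKP} — the pieces $U^{\pm}_i$, the maps $o_{i,j}$, the $\exp$-laws, and the precise meaning of ``proper vector group'' — to what the algebraic group hands you, and doing so uniformly across all the Tits indices in the list rather than case by case. In particular the assertions ``this makes it easy to see them as proper vector groups'' and ``easily proven using the grading'' each hide a genuine, if routine, verification (polynomiality and bounded degree of the conjugation/operator maps, and compatibility of the group multiplication with the grading), and the one place where the classification really enters is the statement $\dim \mathfrak{g}_{\pm 2} = 1$, which I would state explicitly for each index.
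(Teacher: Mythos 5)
Your proposal is correct and follows essentially the same route as the paper: both reduce the claim to verifying the hypotheses of \cite[Theorem 3.27]{OKP} via the five-grading of $\Lie(G)$ induced by the rank-one split torus, the identification of $U^\pm$ as proper vector groups with $\Lie(U^\pm)=\mathfrak{g}_{\pm1}\oplus\mathfrak{g}_{\pm2}$, and the one-dimensionality of $\mathfrak{g}_{\pm2}$ to dispatch the remaining axioms. You simply spell out the bookkeeping (choice of cocharacter, polynomiality of the $o_{i,j}$, reading $\dim\mathfrak{g}_{\pm2}=1$ off the Tits tables) that the paper leaves implicit.
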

	
		\begin{remark}
			\label{rem: def okp}
			An operator Kantor pair consists of certain group functors $U^+$ and $U^-$ with certain maps $o_{i,j} : U^\pm \times U^\mp \longrightarrow U^{\pm}$ for $i > j$. The relevant maps that play a role in the definition of an operator Kantor pair \cite[Definition 3.21]{OKP}, are $Q^\text{grp} = o_{2,1}$, $T = o_{3,1}$ and $P = o_{3,2}$. We also know that $T$ maps to $[U^\pm,U^\pm]$ and for $P$ only the projection onto $U^\pm/[U^\pm,U^\pm]$ is required.
			The $Q$ and $T$ operators are the same as the operators we used to define the actions of $G^+$ and $G^-$ on the Lie algebra. The operator $P$ is actually fully redundant.
		\end{remark}
	
		\begin{definition}
			Suppose that $(G^+,G^-,Q,T,P)$ is an operator Kantor pair such that $f : G^+ \longrightarrow G^-$ is an isomorphism and $f O_x fy = O_{f x} y$ for all $x, y \in G^+$ and all $O \in \{Q, T, P\}$, then we call $(G^+, \tilde{Q}, \tilde{T}, \tilde{P})$, with $\tilde{O}_x y = O_x f y$ for $O \in \{Q, T, P\}$, an \textit{operator Kantor system}. This is equivalent to definition \cite[Definition 3.29]{OKP}, using that $f$ can always be thought of as the identity if we reparametrize $G^-$ using $g \in G^+$ to denote $fg \in G^-$. The condition on $f$ is necessary and sufficient for having shared operators $Q, T$ and $P$.
			
			The \textit{double} of an operator Kantor system is the operator Kantor pair \[(G^+,G^-,Q,T,P).\]
		\end{definition}
	
		\begin{definition}
			Let $(G^+,G^-)$ and $(H^+,H^-)$ be operator Kantor pairs and consider $(f,g) : (G^+,G^-) \longrightarrow (H^+,H^-)$ a pair of group homomorphisms. This pair is called a \textit{homomorphism} if it preserves all the operators.
		\end{definition}

		We now will look at isotopes of the structurable algebra associated to a hermitian cubic norm structure. Consider the skew-dimension 1 structurable algebra $A = K \oplus J$ coming from a hermitian cubic norm structure $J$ over $K/R$ and consider an element $a \in A$ with $\nu((\sqrt{2}a, a \bar{a}))$ invertible. We shall establish that one can construct (under a mild condition on $a$) a new hermitian cubic norm structure $J^{\langle a \rangle}$ over $L/R$ for some $L$ such that $a$ plays the role of the unit in $L \oplus J^{\langle a \rangle}$. 
		
		The operator Kantor pair for the newly obtained structurable algebra can be described as a double $(G^+, \tau_{(\sqrt{2}a, a \bar{a})} G^+)$ of $(G^+, Q \tau_{(\sqrt{2}a, a \bar{a})}, T \tau_{(\sqrt{2}a, a \bar{a})}, P \tau_{(\sqrt{2}a, a \bar{a})})$. This is how one typically constructs isotopes for Jordan pairs \cite{loos1975jordan} and it is more or less clear from \cite[Equation 1.13]{Allison84} that this is how usual isotope for structurable algebras behaves.
	
		\begin{lemma}
			\label{lem: isotope}
			Consider a hermitian cubic norm structure $(J,N,\sharp,T)$ over $K = R[t]/(t^2 - t + \alpha$).
			Suppose that $a$ is an element of the associated structurable algebra for which the scalar $\nu((\sqrt{2}a, a\bar{a}))$ is invertible and for which there exists $(a,u) \in G^+$ such that $u - \bar{u} = \lambda(t - \bar{t})$ with $\lambda$ invertible.
			Then there exists another hermitian cubic norm structure $(J',N',\sharp',T')$ over $L/R$ for some $L$ defining an isomorphic operator Kantor pair for which the associated structurable algebra is isotopic in which the element $a$ plays the role of the unit.
			
			Moreover, the element $(a,u) \in G^+$ always exists whenever $4 = 0$ or $1/2 \in R$.
			\begin{proof}
				One can verify that $\nu(\sqrt{2}a, a \bar{a}))$ coincides with the $\nu$ of \cite[(1.3), or easier Proposition 1.10]{Allison84} and the definition of the other $\nu$ does not contain the scalar $\sqrt{2}$. We use $\tilde{a}$ for the unique element for which $\tau_{(\sqrt{2}a, a \bar{a})} (\tilde{a}) = a$ and we will show that $\tilde{a}$ can play the role of the unit (which works since $a$ and $\tilde{a}$ play interchangeable roles since $(\sqrt{2}a, a\bar{a})_r = (\sqrt{2}a, a\bar{a})_l = (\sqrt{2}\tilde{a}, \tilde{a}\bar{\tilde{a}}))$.
				
				We work with the $1$-generated hermitian cubic norm structure of Proposition \ref{prop: universal model}. 
				We extend our scalars once again to $\mathbb{Z}[z_1,z_2, \nu((\sqrt{2}a, a \bar{a}))^{-1}]$ with $a = (z_1 t + z_2 \bar{t}, v)$.
				
				Since the underlying module is a free $\mathbb{Z}$-module, we can extend scalars to $\mathbb{Q}$. Using the notion of an isotope \cite[section 7]{AllisonHein81} and that $a$ is conjugate invertible with conjugate inverse $2\tilde{a}$, we obtain a hermitian cubic norm structure over $\mathbb{Q}$ in which $\tilde{a}$ has the role of $1$. Moreover, the map $P_u$ of \cite{AllisonHein81} (it is easiest to see using the $P$ of \cite[Theorem 13]{All99} coincides with the relevant restriction\footnote{If there seems to be (an implicit) additional scalar $1/2$ in \cite[Theorem 13]{All99}, recall that the Kantor pair associated to a structurable algebra is rescaled by a factor $2$ over there.} of $\tau_{(\sqrt{2}u, u \bar{u})}$). We can use the obvious automorphism $(G^+,G^-) \cong (G^+, \tau_{(\sqrt{2}a, a \bar{a})} G^+)$ to see $(G^+,G^-)$ as the double of the operator Kantor system $(G,Q^{\langle a \rangle}, T^{\langle a \rangle}, P^{\langle a \rangle})$ using $O^{\langle a \rangle}_{x} y = O_x \tau_{(\sqrt{2}a, a\bar{a})}$, for operators $O$.
				For $\tau_{(\sqrt{2}a, a \bar{a})} y \in \tau_{(\sqrt{2}a, a \bar{a})} G^+$ and $x \in G^+$, we have
				\[ O_{\tau_{(\sqrt{2}a, a \bar{a})} y} x =  \tau_{(\sqrt{2}a, a \bar{a})} \tau_{(\sqrt{2}\tilde{a}, \tilde{a} \bar{\tilde{a}})} O_{\tau_{(\sqrt{2}a, a \bar{a})} y} x = \tau_{(\sqrt{2}a, a \bar{a})} O_{y} \tau_{(\sqrt{2}a, a \bar{a})} x\]
				for all operators $O$.
				
				If $\mathbb{Q}$ was faithfully flat over $\mathbb{Z}$ (which it is not), we could apply \cite[Theorem 4.25]{Michiel2025}, to prove that the operator Kantor pair obtained from $(G,Q^{\langle a \rangle}, T^{\langle a \rangle}, P^{\langle a \rangle})$ can be obtained from a hermitian cubic norm structure with $\tilde{a}$ playing the role of the unit. We can use the argumentation of first part of the theorem to obtain the desired quadratic extension $L/R$, then extend scalars and thereafter apply \cite[Theorem 3.2.7]{Michiel2025} to obtain the same result.
				
				Now, consider subgroup $E$ of $G^+$ formed by the elements with first coordinate in the span of $Q^{<a>}_{(\tilde{a}, u)} \tilde{a} = Q_{(\tilde{a}, u)} a$ for all possible $u$. We note that the module of possible first coordinates $F$ for $E$ is a free rank $2$ module generated by elements of the form $Q^{\langle a\rangle }_{(\tilde{a},\bar{u})} \tilde{a} = \tilde{t}$ using $F = \langle \tilde{t}, Q^{\langle a \rangle}_{(0,t - \bar{t})} \tilde{a} \rangle$. If one extends scalars to $\mathbb{Q}$ one obtains a quadratic extension of $\mathbb{Q}[z_1,z_2,x,y,\alpha, N_1, N_2, (1 - 4 \alpha)^{-1}, \nu(\sqrt{2}a, a\bar{a})^{-1}]$ by $\tilde{t}^2 - \tilde{t} = q$ for some $q$ in the base ring.
				One can verify that $Q^{\langle a \rangle }_{(\tilde{a},u)} \tilde{t} = - q$ using that $Q_{(1,t)} 1 = \bar{t}$ and $Q_{(1,t)} t = t - t^2$ in any operator Kantor pair obtained from a hermitian cubic norm structure, to conclude that $q \in R$. In particular, we see that $F$ can be given the structure of a quadratic extension over $\mathbb{Z}$ if we can find $\tilde{t}$ such that $F = \langle \tilde{a}, \tilde{t} \rangle$ and $1 + 4q$ is invertible.
				We remark that $[\tilde{t},\bar{\tilde{t}}]_2$ is uniquely determined mod $2$ and it equals \[\tilde{t} + \bar{\tilde{t}} = \tilde{a}\bar{\tilde{a}}  \mod 2,\]
				while 
				\[ (\tilde{a}\bar{\tilde{a}})^2 \equiv \nu(\sqrt{2}\tilde{a}, \tilde{a}\bar{\tilde{a}}) \mod 4.\]
				
				This shows that $[\tilde{t},\bar{\tilde{t}}] = d (t - \bar{t})$ for the standard free generator $t - \bar{t}$ of $G^+_2$ and a $d \in R$ such that $d^2 \equiv \nu(\sqrt{2}\tilde{a},\tilde{a}\bar{\tilde{a}}) \mod 4$.
				Since $(1 + 4q) x = (\tilde{t} - \bar{\tilde{t}})^2 x = [s_2, [s_{-2}, x]]$ for $s_2 = [\tilde{t}, \bar{\tilde{t}}]$, $s_{-2}$ the unique element for which $\tau_{(\sqrt{2}\tilde{a}, \tilde{a}\bar{\tilde{a}})} s_{-2} = s_2$, and arbitrary $x \in L(G^+,G^-)_1,$ we obtain that $1 + 4q = d^2 \nu(\sqrt{2}a, a\bar{a})$.
				So that being able to choose $d$ to be invertible, guarantees that $F = \langle \tilde{a}, \tilde{t} \rangle$ and that $1 + 4q$ is invertible.
				
				The assumption on $(a,u)$ allows us to extend with the scalar $d^{-1}$ and conclude that $F$ is a quadratic extension. To obtain the moreover-case, if one extends with $\mathbb{Z}/4\mathbb{Z}$, we automatically have that $d^{-2} = \nu(\sqrt{2}a, a\bar{a})$ and that $u = \tilde{a} \bar{\tilde{a}}/2 + (t - \bar{t})/2$ gives an invertible $d$ if we extend with $1/2$, since $[\tilde{t}, \tilde{\bar{t}}] = - (t - \bar{t})$.
				
				Extending scalars using a quadratic extension $L$ such that $F \otimes L \cong L[u]/(u^2 - u)$, we obtain an operator Kantor subpair $(E^+_L, \tau_{(\sqrt{2}a, a\bar{a})} E^+_L)$
					 of the form \cite[Example 3.2.4]{Michiel2025}. Now, \cite[Theorem 3.2.7]{Michiel2025} gives us a cubic norm pair over $L$ in which $\tilde{a}$ plays the role of $1$ in the structurable algebra.
				To obtain the hermitian cubic norm pair, we have to look at the split hermitian cubic norm structure over $L$ induced by the cubic norm pair \cite[Theorem 4.1.7]{Michiel2025} and look at the fixpoints under the involution of $L$. On one hand we obtain the operator Kantor pair we started from, on the other hand we obtain a hermitian cubic norm structure with $\tilde{a}$ as unit from which we can recover the operator Kantor pair.
			\end{proof}
		\end{lemma}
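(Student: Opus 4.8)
The plan is to realise the isotope of the structurable algebra as a twist of its operator Kantor pair by the grade-reversing automorphism attached to $a$, and then to descend the classical characteristic-zero construction of isotopes of skew-dimension one structurable algebras to an arbitrary base through the operator Kantor pair / hermitian cubic norm structure dictionary of \cite{Michiel2025}. Invertibility of $\nu((\sqrt{2}a, a\bar{a}))$ matches Allison's conjugate-norm criterion (compare \cite[(1.3) and Proposition 1.10]{Allison84}), so $a$ is conjugate invertible and $\tau_{(\sqrt{2}a, a\bar{a})}$ is a well-defined grade-reversing automorphism. By Lemma \ref{lem: gr oneinv} one has $(\sqrt{2}a, a\bar{a})_l = (\sqrt{2}a, a\bar{a})_r = (\sqrt{2}\tilde{a}, \tilde{a}\bar{\tilde{a}})$, where $\tilde{a}$ denotes the preimage of $a$; the point of this symmetry is that it forces the compatibility $\tau_{(\sqrt{2}a, a\bar{a})} O_x \tau_{(\sqrt{2}a, a\bar{a})} y = O_{\tau_{(\sqrt{2}a, a\bar{a})} x} y$ for $O$ among $Q, T, P$, so that $(G^+, Q^{\langle a\rangle}, T^{\langle a\rangle}, P^{\langle a\rangle})$ with $O^{\langle a\rangle}_x y = O_x \tau_{(\sqrt{2}a, a\bar{a})} y$ is an operator Kantor system, and the obvious isomorphism $(G^+, G^-) \cong (G^+, \tau_{(\sqrt{2}a, a\bar{a})} G^+)$ exhibits the original operator Kantor pair as its double. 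This mirrors how isotopes of Jordan pairs are built \cite{loos1975jordan}.

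First I would reduce to the universal one-generator model: take $U$ from Proposition \ref{prop: universal model} with $a = (z_1 t + z_2 \bar{t}, v)$, adjoin $z_1, z_2$ and $\nu((\sqrt{2}a, a\bar{a}))^{-1}$ to the coefficient ring, and note that the underlying module is free over $\mathbb{Z}$, so one may base-change to $\mathbb{Q}$. Over $\mathbb{Q}$ the classical theory applies: \cite[section 7]{AllisonHein81} produces a hermitian cubic norm structure with $\tilde{a}$ as unit whose isotope operator (equivalently the $P$ of \cite[Theorem 13]{All99}, up to the factor-of-two rescaling convention for the associated Kantor pair) coincides with the appropriate restriction of $\tau_{(\sqrt{2}\tilde{a}, \tilde{a}\bar{\tilde{a}})}$. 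Hence over $\mathbb{Q}$ the isotope carries exactly the twisted operators above, which gives the claim that the operator Kantor pair is isomorphic to the one we started from and that $a$ (equivalently $\tilde{a}$) plays the role of the unit — once the construction has been descended.

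The genuinely new and hardest part is precisely that descent: $\mathbb{Q}$ is not faithfully flat over $\mathbb{Z}$, so \cite[Theorem 4.25]{Michiel2025} cannot be invoked directly, and one must first manufacture the quadratic extension $L/R$ over which the new structure lives, inside $G^+$ itself. I would take the subgroup $E \le G^+$ whose first coordinates lie in $\langle Q_{(\tilde{a}, u)} a \mid u \rangle = \langle Q^{\langle a\rangle}_{(\tilde{a}, u)} \tilde{a} \mid u \rangle$; its module $F$ of first coordinates is free of rank two, generated by $\tilde{t} = Q^{\langle a\rangle}_{(\tilde{a}, \bar{u})} \tilde{a}$ and by $Q^{\langle a\rangle}_{(0, t - \bar{t})} \tilde{a}$. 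Using the universal identities $Q_{(1,t)} 1 = \bar{t}$ and $Q_{(1,t)} t = t - t^2$, valid in any operator Kantor pair coming from a hermitian cubic norm structure, one obtains $Q^{\langle a\rangle}_{(\tilde{a}, u)} \tilde{t} = -q$ where $\tilde{t}^2 - \tilde{t} = q$ with $q \in R$; the bracket computation $[\tilde{t}, \bar{\tilde{t}}] = d(t - \bar{t})$ with $d \in R$ and $d^2 \equiv \nu((\sqrt{2}\tilde{a}, \tilde{a}\bar{\tilde{a}})) \pmod 4$, together with $[s_2, [s_{-2}, x]] = (1 + 4q) x$, gives $1 + 4q = d^2\, \nu((\sqrt{2}a, a\bar{a}))$. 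So making $d$ a unit makes $F = \langle \tilde{a}, \tilde{t} \rangle = L$ a genuine quadratic extension of $R$; the hypothesis that some $(a, u) \in G^+$ has $u - \bar{u}$ an invertible multiple of $t - \bar{t}$ is precisely what ensures $d$ is invertible, while for the moreover-part $4R = 0$ forces $d^2 = \nu((\sqrt{2}a, a\bar{a})) \in R^\times$ and $1/2 \in R$ lets one take $u = \tilde{a}\bar{\tilde{a}}/2 + (t - \bar{t})/2$, giving $[\tilde{t}, \bar{\tilde{t}}] = -(t - \bar{t})$ and $d = -1$. Finally I would extend $R$ by a quadratic extension splitting $F$ to land in the split situation \cite[Example 3.2.4]{Michiel2025}, apply \cite[Theorem 3.2.7]{Michiel2025} to obtain a cubic norm pair over $L$ in which $\tilde{a}$ is the identity, and take fixed points of the $L$-involution via \cite[Theorem 4.1.7]{Michiel2025} to produce the hermitian cubic norm structure $(J', N', \sharp', T')$ over $L/R$ whose operator Kantor pair is the double constructed above.
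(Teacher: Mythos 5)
Your proposal is correct and follows essentially the same route as the paper's own proof: the same twist of the operator Kantor pair by $\tau_{(\sqrt{2}a,a\bar{a})}$, the same reduction to the universal one-generator model and base change to $\mathbb{Q}$ via \cite{AllisonHein81}, the same descent through the subgroup $E$, the module $F = \langle \tilde{a},\tilde{t}\rangle$, and the identity $1+4q = d^2\nu$, followed by splitting $F$ and invoking \cite[Theorem 3.2.7]{Michiel2025} and \cite[Theorem 4.1.7]{Michiel2025}. The only (harmless) slip is in the $4=0$ case, where $1+4q=d^2\nu$ gives $d^{-2}=\nu((\sqrt{2}a,a\bar{a}))$ rather than $d^2=\nu((\sqrt{2}a,a\bar{a}))$; either way $d$ is a unit, so the conclusion stands.
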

	
		\begin{remark}
			The lemma above is the first major victory for the definition of structurable operator Kantor pairs given in \cite{OKP}. Namely, consider an operator Kantor pair coming from a hermitian cubic norm structure. We have established that any element that can play the role of the unit element (i.e., any element that the operator Kantor pair structurable), can be thought of as a unit of an isotope of the corresponding structurable algebra.
		\end{remark}
	
		\begin{theorem}
			Any adjoint simple linear algebraic group over a field with as Tits index one of
			\begin{itemize}
				\item $\prescript{2}{}{A}_{n+2,1}$,
				\item $\prescript{1,2}{}{D}_{n+3,1}$ with $n \ge 1$,
				\item $\prescript{3,6}{}{D}_{4,1}$,
				\item $\prescript{2}{}{E}^{35}_6$,
				\item $E^{66}_{7,1}$,
				\item $E^{133}_{8,1}$,
			\end{itemize}
			comes from a division hermitian cubic norm structure, i.e., it is the closure of the projective elementary group associated to the hermitian cubic norm structure, and each finite dimensional division hermitian cubic norm structure defines a group with Tits index in this list.
			\begin{proof}
				If one extends scalars to the algebraic closure $\Phi$, one finds an adjoint simple linear algebraic group that, in each of the cases, can be obtained from a unique hermitian cubic norm structure compatible with the grading induced by the torus of the rank $1$ group. Namely, in the first case one has $\Phi^n \otimes \Phi[s]/(s^2 - s)$ endowed with the standard hermitian form to $\Phi[s]/(s^2 - s)$ and $N = 0$, $\sharp = 0$. In the second case, one has the structurable algebra $M_2(\Phi) \oplus \Phi^{2n}$ interpreted in the usual way as coming from a hermitian cubic norm structure (detailed formulas for $T$, $\sharp$, and $N$ coincide with the formulas given in \ref{B3}). In the other cases, the hermitian cubic norm structure corresponds to a split hermitian cubic norm structure coming from a Freudenthal algebra.
				
				Now, we want to apply \cite[Theorem 4.25]{Michiel2025} on the operator Kantor pair described in Lemma \ref{lem: descr okp}, combined with the previous lemma, to obtain that the linear algebraic group comes from a hermitian cubic norm structure. We first determine the quartic norm and prove that it is anisotropic to obtain the possible unit from the previous lemma, which will allow us to conclude what we want.				
				
				Any rank one simple algebraic group $G$ defines a Moufang set on the minimal parabolic subgroups with as root groups the corresponding unipotent radicals. 
				We remark that for all of the groups listed here, the center $C_P$ of the unipotent radical of $P$ is one dimensional and we will recover $\nu$ (up to a scalar) from the actions on these centers.
				The grading defines two opposite parabolics $P^+$ and $P^-$ with unipotent radicals $U^+$ and $U^-$.
				For each $x \in U^+$, we have that $x P^- x^{-1} \neq P^-$ so that there exists a unique $y \in U^-$ with $y x P^- x^{-1} y^{-1} = P^+$. In particular, we conclude that $y x C_{P^-} x^{-1} y^{-1} = C_{P^+}$.
				This shows that $\nu_\Phi(x) \neq 0$ for all $x \neq 1$ in $U^+$. In particular, if we choose $e = (\sqrt{2} x ,x \bar{x})$, which exists in $U(R[\sqrt{2}]) \subset U(\Phi)$ we note that $\nu(e)$ is invertible (if $1/2 \in R$ replace $e$ by $e' = (x,x\bar{x}/2)$ and divide $\nu$ by $4$, if the characteristic is $2$ look at $(0,x\bar{x}) = (0,x^{[2]})$ and obtain the same $\nu$), hence we can apply the previous lemma and \cite[Theorem 4.25]{Michiel2025} to obtain a hermitian cubic norm structure in which $x$ plays the role of the unit of the structurable algebra. It is automatically division, since $\nu_R$ coincides with $\nu_\Phi$ up to a non-zero scalar.
				
				To observe that the closure of the group generated by $G^+$ and $G^-$ is $G$, note that it coincides with the group generated by all the unipotent radicals of parabolics of $G$, which is a closed normal subgroup of $G$.
			\end{proof}
		\end{theorem}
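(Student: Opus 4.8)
The plan is the following. The second assertion --- that each finite-dimensional division hermitian cubic norm structure produces a group whose Tits index lies in the list --- is already contained in the material preceding Theorem~\ref{thm: possible hcns}: the four cases treated there ($J^\sharp = 0$; the associated cubic norm structure semisimple but not simple; of $H_3(C,\Gamma)$-type; simple) were each matched with a Tits index, and the cases excluded from Theorem~\ref{thm: possible hcns} were ruled out by the anisotropic-kernel arguments given there. So I would settle that direction by reference and concentrate on the first assertion.

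For the first assertion, fix an adjoint simple $G$ over a field $R$ with one of the listed Tits indices and let $\Phi$ be an algebraic closure of $R$. The maximal split (rank one) torus of $G$ induces a $\mathbb{Z}$-grading of $\Lie(G)$ concentrated in degrees $-2,\dots,2$ with one-dimensional $\pm 2$-parts, and over $\Phi$ the split group $G_\Phi$ is recovered, compatibly with this grading, from a standard split hermitian cubic norm structure: $\Phi^n \otimes \Phi[s]/(s^2-s)$ with the standard hermitian form and $N = \sharp = 0$ in the $\prescript{2}{}{A}$-case; the structurable algebra $M_2(\Phi) \oplus \Phi^{2n}$ in the $\prescript{1,2}{}{D}$-case; and the split hermitian cubic norm structure coming from a split Freudenthal algebra in the remaining ($\prescript{3,6}{}{D}_{4,1}$, $\prescript{2}{}{E}^{35}_{6,1}$, $E^{66}_{7,1}$, $E^{133}_{8,1}$) cases. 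By Lemma~\ref{lem: descr okp} the unipotent radicals $(U^+,U^-)$ of a pair of opposite minimal parabolics of $G$ form an operator Kantor pair over $R$, and after base change to $\Phi$ it is the operator Kantor pair of the model just listed; in particular the center $C_{P^+}$ of $U^+$, being $\Lie(L_2)$, is one dimensional.

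Next I would extract the quartic norm from this operator Kantor pair and prove it anisotropic. The scalar by which a grade-reversing automorphism carries $C_{P^-}$ onto $C_{P^+}$ is, up to a fixed nonzero factor, the value of the quartic norm $\nu$ --- this is Lemma~\ref{lem: def nu} read through the operator Kantor pair, and it exhibits $\nu$ as a polynomial law over $R$ compatible with $\nu_\Phi$. Since $G$ has relative rank one, it defines a Moufang set on its minimal parabolics with the $U^{\pm}$ as root groups; hence for $x \in U^+(R) \setminus \{1\}$ one has $xP^-x^{-1} \neq P^-$, so there is a unique $y \in U^-(R)$ with $yxP^-x^{-1}y^{-1} = P^+$, whence $yxC_{P^-}x^{-1}y^{-1} = C_{P^+}$ and therefore $\nu_R(x) \neq 0$. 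Thus $\nu$ is anisotropic. Now pick any $x \in U^+(R) \setminus \{1\}$ and put $e = (\sqrt{2}\,x,\,x\bar{x}) \in U^+(R[\sqrt{2}])$ (replace $e$ by $(x, x\bar{x}/2)$ when $1/2 \in R$, and by $(0, x^{[2]})$ when $R$ has characteristic $2$); then $\nu(e)$ is invertible, so Lemma~\ref{lem: isotope} together with \cite[Theorem 4.25]{Michiel2025} yields a hermitian cubic norm structure $J$ over $R$ whose associated operator Kantor pair is $(U^+,U^-)$ and in which $x$ is the unit of the associated structurable algebra. This $J$ is division, since its quartic norm agrees with the anisotropic $\nu_R$ up to a nonzero scalar. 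Finally the Zariski closure of $\langle G^+,G^-\rangle$ coincides with the closed subgroup of $G$ generated by all unipotent radicals of parabolic subgroups, which is a closed connected normal subgroup of the simple group $G$, hence all of $G$.

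The main obstacle is the passage from the operator Kantor pair $(U^+,U^-)$ over $R$ to a genuine hermitian cubic norm structure over $R$: this operator Kantor pair is of hermitian-cubic-norm-structure type only after base change to $\Phi$ and carries no canonical unit, so descent cannot be applied to it directly. The resolution is exactly the order above --- anisotropy of $\nu$ (the single place where relative rank one is genuinely used) produces an $R$-rational element $x$ which, after isotoping by Lemma~\ref{lem: isotope} so that $x$ becomes the unit, furnishes the base point needed to descend via \cite[Theorem 4.25]{Michiel2025}. Checking the auxiliary hypothesis of Lemma~\ref{lem: isotope} --- an element $(x,u) \in G^+$ with $u - \bar u$ a unit multiple of $t - \bar t$ --- is what forces the small case distinction on the characteristic of $R$ and the harmless enlargement of $R$ by $\sqrt{2}$.
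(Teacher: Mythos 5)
Your proposal is correct and follows essentially the same route as the paper: the second assertion by reference to the classification lemmas, and the first via the operator Kantor pair of Lemma~\ref{lem: descr okp}, anisotropy of $\nu$ from the rank-one Moufang set acting on the one-dimensional centers of the unipotent radicals, the element $e = (\sqrt{2}\,x, x\bar{x})$ fed into Lemma~\ref{lem: isotope} and \cite[Theorem 4.25]{Michiel2025}, and the normal-subgroup argument for the Zariski closure. Your phrasing of the anisotropy step at the level of $R$-points (where the relative rank one hypothesis actually applies) is, if anything, slightly more careful than the paper's.
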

	
		\begin{remark}
			The argumentation used above works broader and can probably be extended to prove that any group with Tits index in which one of the circled vertices induces a grading from which one can recover a hermitian cubic norm structure in the split case, comes from a hermitian cubic norm structure. A subtlety one has to keep in mind, is that one has to work with a Tits set as defined in \cite{muhlherr2022} instead of a Moufang set. 
		\end{remark}

	\appendix
	\section{The computer model}
	\label{app A}
	
	\subsection{Implementation}
	
	One can use basic computer algebra software to do most of the desired computations in our one generator model.
	First and foremost, we can embed $K/R$ into $\mathbb{C}/\mathbb{R}$ using any transcendental $\alpha$ with $1 - 4\alpha < 0$, where $\alpha, x, y, N_1, N_2$ are algebraically independent over $\mathbb{Q}$ in $\mathbb{R}$. The involution of $K$ is then induced by the involution of $\mathbb{C}$ fixing $\mathbb{R}$, since $(t - \bar{t})^2 = 1 - 4 \alpha < 0$ so that $t - \bar{t}$ is imaginary.
	 We can also just write $N \in \mathbb{C}$ for $N_1 t + N_2 \bar{t}$.
		
	One defines explicitly 
	\[ (a v + b v^\sharp + c v \times v^\sharp)^\sharp = f_1(a,b,c) v + f_2(a,b,c) v^\sharp + f_3(a,b,c) v \times v^\sharp\]
	with $f_i$ the quadratic map obtained by the matrices
	\[ M_1 = \begin{pmatrix}
		 & & \\ & N & \\ \bar{N} & y & \bar{N} x
	\end{pmatrix}, \quad M_2 =  \begin{pmatrix}
	1 & & \\ & & \\ x & N & y
\end{pmatrix}, \quad \text{and}, \quad M_3 = \begin{pmatrix}
& & \\ 1 & & \\ & & - \bar{N}
\end{pmatrix},\]
using
\[ f_i(a,b,c) = \begin{pmatrix}
	\bar{a} & \bar{b} & \bar{c}
\end{pmatrix} M_i \begin{pmatrix}
\bar{a} \\ \bar{b} \\ \bar{c}
\end{pmatrix}.\]
The hermitian form $T$ can also be expressed using a matrix, using
\[  \begin{pmatrix}
	a & b & c
\end{pmatrix} \begin{pmatrix}
	x & 3N & 2y \\ 3\bar{N} & y & 2 \bar{N}x \\ 2y & 2 N x & xy + 3N\bar{N}
\end{pmatrix} \begin{pmatrix}
\bar{d} \\ \bar{e} \\ \bar{f} 
\end{pmatrix}\]
for $T(av + bv^\sharp + c(v \times v^\sharp),dv + e v^\sharp + f(v \times v^\sharp))$.

The easiest definition of the norm $n$ (we write $n$ for the norm instead of $N$ to avoid confusion with the $N$ that will appear in the definition of the norm) is 
\[ n(w) = T(w,w^\sharp)/3,\]
which yields
\[ b^3\bar{N}^2 + b^2(ax + 2cy)\bar{N} + (-Nc^3 + bc^2x + 3abc)N\bar{N} + (acx + c^2y + a^2)(Ncx + Na + by)\]
for $w = av + bv^\sharp + c (v \times v^\sharp)$.

\subsection{Verified equations}

The actual implementation and a PDF-version of it can be found on \url{https://github.com/MichielSmet/hermitian_cubic_norm}. We used a maple worksheet. We refer to equations that were verified in the maple worksheet by their number.

\begin{lemma}
	\label{lem: app1}
	Set $J$ to be the $3$-dimensional $\mathbb{C}$-vectorspace spanned by $v , v^\sharp$ and $v \times v^\sharp$. The quadruple $(J,n,\sharp,T)$ forms a hermitian cubic norm structure over $\mathbb{C}$.
	\begin{proof}
		For the first axiom $T(a,b^\sharp) = n^{(1,2)}(a,b)$, we use that \[n^{(1,2)}(a,b) = T(a,b^\sharp)/3 + T(b, a \times b)/3\] for all $a, b$. Computer verification (see equation 1.8) shows that \[T(b, a \times b) = 2 T(a ,b^\sharp)\] for $a = a_1 v + a_2 v^\sharp + a_3 v \times v^\sharp$ and $b = b_1 v + b_2 v^\sharp + b_3 v \times v^\sharp$ over $\mathbb{C}[a_1,\dots,b_3]$. So, we conclude that the first axiom holds for arbitrary $a, b$ over arbitrary scalar extensions.
		
		The second axiom $N(c) c = (c^\sharp)^\sharp$ is easily verified for $c = c_1 v + c_2 v^\sharp + c_3 v \times v^\sharp$ over $\mathbb{C}[c_1,c_2,c_3]$ using a computer (see equation 1.9), which implies that it holds for all $c$ over all scalar extensions.
		The last two axioms follow from Lemma \ref{lem: redundant axs}.
	\end{proof}
\end{lemma}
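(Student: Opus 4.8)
The plan is to check the structural properties of $n,\sharp,\times,T$ by inspection, verify axioms (\ref{ax1}) and (\ref{ax2}) via the two recorded symbolic computations, and then read off (\ref{ax3}) and (\ref{ax4}) from Lemma \ref{lem: redundant axs}. The structural checks require no real work: $T$ is hermitian because the displayed $3\times3$ matrix is conjugate-symmetric (its diagonal entries are fixed by the involution and $M_{12}=\overline{M_{21}}$, $M_{13}=\overline{M_{31}}$, $M_{23}=\overline{M_{32}}$); the map $\sharp$ is $K$-anti-quadratic and $\times$ is its linearisation because the $f_i$ are obtained by sandwiching $M_i$, respectively $M_i+M_i^{\top}$, between conjugated coordinate vectors; and $n(w)=T(w,w^\sharp)/3$ is $K$-cubic since $T(\lambda w,(\lambda w)^\sharp)=T(\lambda w,\bar\lambda^2 w^\sharp)=\lambda^3T(w,w^\sharp)$.

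For axiom (\ref{ax1}) I would first polarise $n$ purely formally: from $n(\mu a+\nu b)=\tfrac13\,T(\mu a+\nu b,(\mu a+\nu b)^\sharp)$ and $(\mu a+\nu b)^\sharp=\bar\mu^2 a^\sharp+\bar\mu\bar\nu\,(a\times b)+\bar\nu^2 b^\sharp$, the coefficient of $\mu\nu^2$ is $n^{(1,2)}(a,b)=\tfrac13\big(T(a,b^\sharp)+T(b,a\times b)\big)$. Hence (\ref{ax1}) is equivalent to the single identity $T(b,a\times b)=2\,T(a,b^\sharp)$, which is a polynomial identity in the six coordinates of $a$ and $b$ over $\mathbb{C}$; this is exactly equation~1.8 of the worksheet. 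Since $\mathbb{C}$ is an infinite field, a formal polynomial identity is inherited by every scalar extension, so (\ref{ax1}) holds in the sense demanded by the definition. Axiom (\ref{ax2}), $(c^\sharp)^\sharp=n(c)c$, is handled the same way: after unwinding the definitions it becomes a polynomial identity in the three coordinates of $c$ (with coefficients polynomial in $N,\overline N,x,y$), namely equation~1.9, and therefore holds over all scalar extensions.

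Finally, (\ref{ax3}) and (\ref{ax4}) come from Lemma \ref{lem: redundant axs}. Axioms (\ref{ax1}) and (\ref{ax2}) are now available; the vector $v$ satisfies $n(v)=\tfrac13T(v,v^\sharp)=N=N_1t+N_2(1-t)$, which is nonzero (the parameters being algebraically independent) and hence invertible in the field $\mathbb{C}$; and the coordinate functional $f\colon av+bv^\sharp+c\,(v\times v^\sharp)\mapsto a$ is $K$-linear with $f(v)=1$. Thus both hypotheses of Lemma \ref{lem: redundant axs} are met, and (\ref{ax3}), (\ref{ax4}) follow automatically.

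The genuinely laborious part is the pair of symbolic computations behind equations~1.8 and~1.9 — in particular~1.9, where $(c^\sharp)^\sharp$ is a degree-four expression in the coordinates of $c$ with coefficients that are themselves polynomials in $N,\overline N,x,y$ — which is mechanical but unpleasant to do by hand; this is precisely why the worksheet is supplied. Conceptually the only point needing care is confirming that Lemma \ref{lem: redundant axs} actually applies, and here the linear functional and the element of invertible norm are both transparent.
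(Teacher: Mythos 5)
Your proposal is correct and follows essentially the same route as the paper: reduce axiom (\ref{ax1}) via the polarisation $n^{(1,2)}(a,b)=\tfrac13\bigl(T(a,b^\sharp)+T(b,a\times b)\bigr)$ to the computer-verified identity $T(b,a\times b)=2T(a,b^\sharp)$, cite the worksheet for axiom (\ref{ax2}), and invoke Lemma \ref{lem: redundant axs} for (\ref{ax3}) and (\ref{ax4}). The only difference is that you explicitly verify the hypotheses of Lemma \ref{lem: redundant axs} (the element $v$ of invertible norm and the coordinate functional) and the structural properties of $n,\sharp,T$, which the paper leaves implicit.
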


\begin{lemma}
	\label{lem: app2}
	The $J$ over $K/R$ of Proposition \ref{prop: universal model} is a hermitian cubic norm structure.
	\begin{proof}
		Lemma \ref{lem: app1} shows that the axioms hold for $a = a_1 v + a_2 v^\sharp + a_3 v \times v^\sharp$ and $b = v_1 v + b_2 v^\sharp + b_3 v \times v^\sharp$ over $K[a_1,\dots,b_3]$. Hence, we conclude that it holds for arbitrary elements $a$ and $b$ over arbitrary scalar extension.			
	\end{proof}
\end{lemma}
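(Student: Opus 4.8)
The plan is to deduce the statement from the concrete complex model of Lemma \ref{lem: app1} by a transcendence argument. The free rank-three $K$-module of Proposition \ref{prop: universal model}, together with its maps $\sharp$, $T$ and $N$, is pinned down by Lemmas \ref{lem: def sharp} and \ref{lem: N on basis}, the (unlabelled) lemma describing $T$, and formula \eqref{def: N}; in particular $\sharp$ is $K$-anti-quadratic, its linearisation is $\bar{\cdot}$-antilinear in both slots, $N$ is $K$-cubic and $T$ is hermitian purely by the shape of those formulas. So the only thing left to verify is that the four axioms (\ref{ax1})--(\ref{ax4}) hold over every scalar extension $L[t]/(t^2 - t + \alpha)$.

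First I would make the reduction to polynomial identities explicit. Write a pair of elements over the basis $v, v^\sharp, v \times v^\sharp$ as $a = \sum_i (p_i + q_i t)\, w_i$ and $b = \sum_i (r_i + s_i t)\, w_i$, where $w_1 = v$, $w_2 = v^\sharp$, $w_3 = v \times v^\sharp$ and $p_i, q_i, r_i, s_i$ are twelve indeterminates over $R$; since $K = R \oplus Rt$, this is exactly the generic element of the pair $(J, J)$ regarded as $R$-modules, so in particular $\overline{p_i + q_i t} = (p_i + q_i) - q_i t$ is faithfully modelled rather than collapsed. Both sides of each of (\ref{ax1})--(\ref{ax4}) are polynomial laws in $(a, b)$ of bounded degree over $R$, so by the basic permanence principle for polynomial laws (cf.\ \cite[section 12]{Skip2024}) the axioms hold for all elements over all scalar extensions as soon as the finitely many identities obtained by evaluating at this generic pair hold in $R[p_i, q_i, r_i, s_i][t]/(t^2 - t + \alpha)$.

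The transfer is then routine. Fix a ring homomorphism $\iota : R \to \mathbb{R} \subset \mathbb{C}$ sending $x, y, \alpha, N_1, N_2$ to algebraically independent real numbers with $1 - 4\iota(\alpha) < 0$; since $R$ is obtained from the polynomial ring $\mathbb{Z}[x,y,\alpha,N_1,N_2]$ by inverting $1 - 4\alpha$, the map $\iota$ is injective, and hence so is the induced map $R[p_i, q_i, r_i, s_i] \hookrightarrow \mathbb{R}[p_i, q_i, r_i, s_i]$. Since $(t - \bar t)^2 = 1 - 4\alpha$ is sent to a negative real number, $\iota$ extends to an embedding $K \hookrightarrow \mathbb{C}$ taking $t$ to a non-real root of $t^2 - t + \alpha$ and intertwining $\bar{\cdot}$ with complex conjugation; under this embedding the formulas of Lemmas \ref{lem: def sharp} and \ref{lem: N on basis}, the lemma describing $T$, and \eqref{def: N} become, coefficient for coefficient, the formulas fixing $(J, n, \sharp, T)$ in Lemma \ref{lem: app1} (equivalently, the matrices $M_1, M_2, M_3$, the matrix for $T$ and the norm expression of the implementation in Appendix \ref{app A}), so the generic pair above is carried to the generic pair $(a_1 v + a_2 v^\sharp + a_3 v \times v^\sharp,\; b_1 v + b_2 v^\sharp + b_3 v \times v^\sharp)$ of that lemma. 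Lemma \ref{lem: app1} asserts that the four axioms hold in that model for the generic pair --- (\ref{ax1}) and (\ref{ax2}) by the symbolic computation, (\ref{ax3}) and (\ref{ax4}) by Lemma \ref{lem: redundant axs} --- i.e.\ the images under $\iota$ of the four identities of the previous paragraph vanish in $\mathbb{C}[p_i, q_i, r_i, s_i][t]/(t^2 - t + \alpha)$. By injectivity of $\iota$ those identities already vanish over $R$, which is exactly what is needed; unwinding the reduction then gives (\ref{ax1})--(\ref{ax4}) for all elements over all scalar extensions, so $U$ is a hermitian cubic norm structure.

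The part that deserves care, as opposed to the transfer itself, is the reduction in the second paragraph: one must take the generic element to have coordinates of the form $p_i + q_i t$ with six fresh indeterminates per element, so that the conjugation is captured, and one must confirm the dictionary — that the formulas defining $\sharp$, $T$, $N$ on $U$ really do specialise coefficient for coefficient to the matrices and the norm expression verified in Lemma \ref{lem: app1}. With that dictionary in hand, everything else is just the injectivity of a map of polynomial rings.
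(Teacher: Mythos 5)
Your proposal is correct and follows essentially the same route as the paper: transfer the generic identities from the complex model of Lemma \ref{lem: app1} back to $R$ via the embedding given by algebraic independence of $x,y,\alpha,N_1,N_2$, then invoke the permanence principle for polynomial laws to pass to arbitrary elements and scalar extensions. You are in fact somewhat more careful than the paper's one-line proof, in particular in insisting that the generic element carry coordinates $p_i+q_it$ so that the involution is faithfully modelled, which is a point the paper leaves implicit.
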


Now, we want to list the equations related to Lemma \ref{lem: universal model one invertible} that we verified using a computer. These equations must hold for the universal one-generated hermitian cubic norm structure $J$ over $K/R$ introduced in Proposition \ref{prop: universal model}.
In this lemma, we consider an element $g = ((a,v), (u, av + v^\sharp)) \in G^+$.
From this element, we defined
\begin{itemize}
	\item $\alpha_1 = - ua + a^2 \bar{a} + \overline{N(v)}$, which is an element of $K$,
	\item $\alpha_2 = (-u + T(v,v))v + \bar{a}v\sharp - v \times v^\sharp$, which is an element of $J$,
	\item $\gamma = \nu(u,a,v) u + 2 N(v) \overline{N(v)} + 2a\bar{a} T(v^\sharp,v^\sharp) - 2(ua N(v) + \bar{u}\bar{a} \overline{N(v)})$ in $K$,
	\item $w = \nu(u,a,v)^2 \alpha_1 \alpha_2 + \alpha_2^\sharp$, in $J$,
	\item $\nu(a,u,v) \cdot g_r = ((\alpha_1,\alpha_2), (\gamma, w)) \in G^-,$
\end{itemize}
where $\lambda \cdot (x,y) = (\lambda x, \lambda^2 y)$. We recall that 
\[ Q_{(x,u)} y = (x \bar{y}) x - uy.\]

\begin{lemma}
	\label{lem: app eqs}
	The following equations, with $a \in K$, $v \in J$, hold in the hermitian cubic norm structure $J$ over $K/R$ defined in Proposition \ref{prop: universal model} 
	\begin{enumerate}		
		\item $\gamma + \bar{\gamma} = \alpha_1 \overline{\alpha_1} + T(\alpha_2,\alpha_2)$,
		\item $\gamma - \bar{\gamma} = \nu(u,a,v) (u - \bar{u})$,
		\item $P_g (0,s) = s ( \bar{u}a - a^2 \bar{a} - \overline{N(v)}, (\bar{u} - T(v,v))v - \bar{a} v^\sharp + v \times v^\sharp)$,
		\item $ - (a,v) + Q_g g_r + P_g(0, \gamma - \overline{\gamma}) = 0$,
		\item $ 2 \nu(u,a,v)V_{g,g_r} + V_{g, (u - \bar{u}) (a,v)} + (4\nu(a,u,v) + L_{(u  - \bar{u})}^2) = 0$,
		\item $ 2 \nu(u,a,v) V_{gr, g} + V_{(u - \bar{u}) (a,v), g} + (4\nu(a,u,v) + L_{(u  - \bar{u})}^2) = 0$,
		\item $ 6 (u - \bar{u}) (\alpha_1,\alpha_2) + 3 (u - \bar{u})^2 (a,v) - V_{g, (u - \bar{u}) (a,v)} (a,v) = 0.$
	\end{enumerate}
\begin{proof}
	These equations were verified using a computer. We list the equation (2.x) of the maple worksheet for each of the equations listed here.
	Namely, (2) follows immediately from the definition of $\gamma$, (1) corresponds to (2.11), (3) to (2.8) using that $3 x_3 = 3 x_2 x_1 - x_1^3$ for $\exp(g) = 1 + x_1 + x_2 + x_3 + x_4$ with $x_i$ acting as $+i$ on the degree, (4) is (2.8) up to a factor $\nu(u,a,v)$, (5) is  (2.14), (6) is (2.15), and (7) is (2.16).
\end{proof}
\end{lemma}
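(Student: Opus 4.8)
The plan is to reduce each of the seven identities to an equality of explicit elements of a free module over a polynomial ring and then to verify those identities by a finite symbolic computation; this is exactly the route taken in the accompanying worksheet, and these identities are the computational core behind Lemma~\ref{lem: universal model one invertible}.

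First I would make the universal model of Proposition~\ref{prop: universal model} fully explicit. Using the embedding $K/R \hookrightarrow \mathbb{C}/\mathbb{R}$ of Appendix~\ref{app A}, take $K/R = \mathbb{C}/\mathbb{R}$ with $\alpha, x, y, N_1, N_2$ algebraically independent over $\mathbb{Q}$; then $\sharp$, $T$ and $N$ on $J = \langle v, v^\sharp, v \times v^\sharp\rangle$ are given by the matrices $M_1, M_2, M_3$ and the displayed matrix for $T$. From these one obtains closed formulas for the multiplication of $A = K \oplus J$, for $V_{a,b}c = -(a\bar b)c - (c\bar b)a + (c\bar a)b$, for left multiplication $L_c$, and --- using the characterisation of $\exp_\pm$ together with the relation $3x_3 = 3x_2 x_1 - x_1^3$ among the graded components of $\exp_+(g) = 1 + x_1 + x_2 + x_3 + x_4$ --- for the operators $Q_g$, $T_g$ and $P_g$. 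Next I would substitute the explicit data $a = z_1 t + z_2\bar t$ and $u = (z_1 z_2 + T(v,v) + z_3)t - z_3\bar t$, adjoin $\nu(u,a,v)^{-1}$ to the scalars, and write down $\nu(u,a,v)$ (Lemma~\ref{lem: def nu}), $\alpha_1$, $\alpha_2$, $\gamma$, $w$ and $g_r$ as explicit elements over the resulting polynomial ring.

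With all of this in hand, each of the seven statements becomes an equality of two explicit elements of a free module over a polynomial ring, and I would clear denominators and check componentwise that the difference vanishes. Several are light and doable by hand: (2) is immediate from the definition of $\gamma$; (1) is exactly the hermitian constraint $\gamma + \bar\gamma = \alpha_1\overline{\alpha_1} + T(\alpha_2,\alpha_2)$ needed for $((\alpha_1,\alpha_2),(\gamma,w))$, and hence $g_r$, to lie in $G$, and it falls out after expanding $\gamma$ and using $u + \bar u = a\bar a + T(v,v)$; and (3) is the unwinding of the $x_3$-component of $\exp_+(g)$ acting on $s_{-2}$ via $3x_3 = 3x_2 x_1 - x_1^3$. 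Identities (4)--(7) intertwine these operators and are best delegated to the computer: (4) is (3) rescaled by $\nu(u,a,v)$, while (7) records how the operator $V_{g,(u-\bar u)(a,v)}$ appearing in (5)--(6) acts on the vector $(a,v)$ itself. Finally I would invoke the usual genericity principle (cf.\ \cite[Proposition~12.24]{Skip2024} and the universality of Proposition~\ref{prop: universal model}): since $\mathbb{Z}[x,y,\alpha,N_1,N_2,(1-4\alpha)^{-1}][z_1,z_2,z_3]$ embeds into $\mathbb{C}$ through algebraically independent transcendentals and the underlying modules are free, an identity checked over this concrete model holds over that ring, and therefore in any hermitian cubic norm structure after specialising along a homomorphism out of the universal model.

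The main obstacle is the size of the bookkeeping rather than any conceptual difficulty, and it is concentrated in the operator identities (5) and (6): there $V_{g,g_r}$, $V_{g,(u-\bar u)(a,v)}$ and $L_{(u-\bar u)}^2$ act on $L_1 = A$, which is four-dimensional over $K$ and eight-dimensional over $R$, so one must track the full matrices of these operators against a basis with coefficients that are high-degree polynomials in eight transcendentals. This is precisely the part I would hand to computer algebra --- it is what the worksheet equations (2.8), (2.11) and (2.14)--(2.16) carry out --- while doing (1)--(3) together with the reduction (4) by hand remains feasible.
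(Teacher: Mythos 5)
Your proposal is correct and follows essentially the same route as the paper: instantiate the universal model explicitly (via the $\mathbb{C}$-embedding of Appendix \ref{app A}), write out $\nu$, $\alpha_1$, $\alpha_2$, $\gamma$, $g_r$ and the operators $Q_g$, $P_g$, $V$, $L$ concretely, and verify the identities by symbolic computation, with (2) immediate from the definition of $\gamma$, (3) and (4) obtained from the relation $3x_3 = 3x_2x_1 - x_1^3$, and the heavier operator identities delegated to computer algebra exactly as in worksheet equations (2.8), (2.11), (2.14)--(2.16). The only cosmetic difference is that you propose doing (1) and (3) by hand where the paper also cites the worksheet, which does not change the argument.
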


\begin{lemma}
	\label{lem: app main}
	Lemma \ref{lem: universal model one invertible} holds.
	\begin{proof}
		We prove this lemma using the equations of Lemma \ref{lem: app eqs}.
		Equation (1) shows that $g_r \in G^-$. Equation (2) shows that \cite[Theorem 13]{All99} should be applied to show that $(a,B) = ((\alpha_1,\alpha_2), - \nu(u,a,v) L_{u - \bar{u}}/2)$ equals $((a,v), - L_{u - \bar{u}}/2)^\wedge$ (using the notation of that theorem). The additional minus sign in front of $L_{u - \bar{u}}/2$, comes from the different sign conventions we use, given that $Q_{(0,u - \bar{u})} x = [(u - \bar{u})_2, x_{-1}] = - ((u - \bar{u}) x)_1 = - (L_{(u - \bar{u})} x)_1$

		Equations (5) and (6) are the first two equations of \cite[Theorem 13]{All99}, rescaled by a factor $2 \nu(u,a,v)$. Equation (7) is the third equation of \cite[Theorem 13]{All99}, rescaled by a factor $-6 \nu(u,a,v)$.
		
		The fourth equation of the theorem applies automatically since $L_{u - \bar{u}}^2 = \lambda \text{Id}$ for some $\lambda$ so that the equation follows from $0 = K_{x,x} = [x,x]$.
		
		So, \cite[Theorem 13]{All99} implies that $g$ is one-invertible with $g_r$ as described. Subtleties involving the base ring do not matter since $g_l = (g^{-1})_r$ (while $\nu(g^{-1}) = \nu(g)$) and
		\[ \tau = g_l g g_r \]
		reverses the grading of the Lie algebra over $\mathbb{C}$, which contains our Lie algebra over $\mathbb{Z}$. 
	\end{proof}
\end{lemma}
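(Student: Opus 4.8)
The plan is to reduce Lemma \ref{lem: universal model one invertible} entirely to the explicit polynomial identities of Lemma \ref{lem: app eqs} together with Allison's one-invertibility criterion \cite[Theorem 13]{All99}. Since the universal model of Proposition \ref{prop: universal model} has underlying module free of rank three over $K$, hence a free $\mathbb{Z}$-module, it is enough to verify the needed identities after embedding everything into $\mathbb{C}$ (with $\alpha$ transcendental, $1-4\alpha<0$, as in Appendix \ref{app A}); an identity of polynomials holding over $\mathbb{C}[z_1,z_2,z_3]$ automatically holds over the original ring $\mathbb{Z}[x,y,\alpha,N_1,N_2,(1-4\alpha)^{-1}][z_1,z_2,z_3]$ and over its localization at $\nu(u,a,v)$.

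First I would fix the candidate right conjugate inverse $g_r$ by the formulas displayed just before Lemma \ref{lem: app eqs} and check that it really lies in $G^-$: this is precisely equation (1) of that lemma, $\gamma+\bar\gamma = \alpha_1\overline{\alpha_1}+T(\alpha_2,\alpha_2)$, the defining relation for membership in $G$. Equation (2), $\gamma-\bar\gamma = \nu(u,a,v)(u-\bar u)$, fixes the skew part and, after accounting for the opposite sign convention $Q_{(0,u-\bar u)}x = -(L_{u-\bar u}x)_1$ used here, identifies the correct normalization: the pair that should play the role of $((a,v),-L_{u-\bar u}/2)^{\wedge}$ in \cite[Theorem 13]{All99} is $((\alpha_1,\alpha_2),-\nu(u,a,v)L_{u-\bar u}/2)$.

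Next comes the matching of the structural equations of \cite[Theorem 13]{All99}. Equations (5) and (6) of Lemma \ref{lem: app eqs}, divided by $2\nu(u,a,v)$, are its first two equations; equation (7), divided by $-6\nu(u,a,v)$, is its third; equations (3) and (4) supply the remaining input, namely the explicit form of $P_g(0,s)$ and the relation $-(a,v)+Q_g g_r + P_g(0,\gamma-\bar\gamma)=0$ (with the degree-three part of $\exp(g)$ expressed via $3x_3 = 3x_2x_1 - x_1^3$). The fourth hypothesis of \cite[Theorem 13]{All99} is automatic, since $L_{u-\bar u}^2$ is a scalar operator and the relevant identity collapses to $[x,x]=0$. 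Granting all of its hypotheses, \cite[Theorem 13]{All99} yields that $g$ is one-invertible with the stated $g_r$; one then sets $g_l=(g^{-1})_r$, which is legitimate because $\nu(g^{-1})=\nu(g)$, and $\tau=g_l g g_r$ reverses the grading.

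The main obstacle is bookkeeping, but genuinely delicate: the paper uses the opposite sign for $V_{a,b}$ — hence for $Q$ and for the Lie bracket — compared to \cite{All99}, and the Kantor pair attached to a structurable algebra in \cite{All99} carries an extra rescaling by $2$. Propagating these two normalizations through every term of \cite[Theorem 13]{All99}, and arranging that the computer-verified identities of Lemma \ref{lem: app eqs} are stated with exactly the matching rescalings (the factors $2\nu(u,a,v)$ and $-6\nu(u,a,v)$, the sign in front of $L_{u-\bar u}/2$, the expression of $x_3$), is where the real care lies; once this dictionary is fixed, the remaining verifications are the routine polynomial manipulations already performed on the computer.
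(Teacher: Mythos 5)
Your proposal is correct and follows essentially the same route as the paper: equation (1) for membership of $g_r$ in $G^-$, equation (2) for the normalization of $((\alpha_1,\alpha_2), -\nu(u,a,v)L_{u-\bar u}/2)$, equations (5)--(7) matching the first three hypotheses of \cite[Theorem 13]{All99} up to the stated rescalings, the fourth hypothesis collapsing to $[x,x]=0$ since $L_{u-\bar u}^2$ is scalar, and the conclusion via $g_l=(g^{-1})_r$ together with the embedding over $\mathbb{C}$. The only cosmetic difference is that you fold equations (3) and (4) into the hypotheses of the criterion, whereas the paper uses only (1), (2), (5), (6), (7) in the main argument and relegates (3) and (4) to a remark about an alternative determination of $g_r$.
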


\begin{remark}		
	Equations (3) and (4) of Lemma \ref{lem: app eqs}, relate to a different way to determine $g_r$, which always works if $1/2$ is contained in the base ring. Namely, if $g$ is one-invertible, $g_r$ should be the unique element such that $\nu(u,a,v)(\gamma - \bar{\gamma}) = \nu(u,a,v)^2 (u - \bar{u})$, $T_g g_r = 0$, and equation (4) holds.
\end{remark}

\section{Structurable algebras coming from hermitian forms}
\label{app: B}

\subsection{Basic definitions}

In this section, we explain how one-invertibility works in structurable algebras coming from hermitian forms.
Such a structurable algebra is constructed from an associative algebra $C$ with involution $\bar{\cdot}$, a right-$C$-module $M$ and a right-hermitian form $h : M \times M \longrightarrow C$. The structurable algebra $A$ is formed by elements of $C \times M$, equipped with the multiplication
\[ (a,m)(b,n) = (ab + h(n,m), na + m\bar{b}),\]
and involution
\[ \overline{(a,m)} =(\bar{a},m).\]
One can define an operator Kantor pair corresponding to these algebras, as done in \cite[section 5.3]{OKP}.
In this appendix, we explain how one-invertibility works in this setup, to expand upon remark \ref{rem: inv in degen case easy}.

We work with the associative algebra described in \cite[section 5.3]{OKP} consisting of by $3 \times 3$-matrices lying in
\[ \mathcal{A} = \begin{pmatrix}
	C & A & C \\ A & \mathcal{E} & A \\ C & A & C
\end{pmatrix},\]
using a specific space $\mathcal{E}$ of $A^{2 \times 1} \oplus A^{1 \times 2}$ endomorphisms.
For the definition of the product, see the reference.
This algebra is $\mathbb{Z}$-graded if one looks at the weights of the conjugation action of 
\[ \begin{pmatrix}
	t \\ & 1 \\ & & t^{-1}
\end{pmatrix}.\]

We have $A = C \oplus M$ with hermitian forms $h : M \times M \longrightarrow C$, $h^\pm : A \times A \longrightarrow C : (a + m, b + n) \mapsto a\bar{b} \pm h(m,n)$.
We write $c \cdot_\epsilon (a,m) = (ca, - cm)$ for $c \in C$, $(a,m) \in C \times M$.
We also write $(a,m) \cdot_\epsilon c = (\bar{c}a, - \bar{c}m)$ for the corresponding right action.

We have groups $G^+$ and $G^-$ isomorphic to $\{ (a,c) \in A \times C | c + \bar{c} = h^-(a,a) \}$, that are realized as
\[ (a,c) \mapsto \begin{pmatrix}
	1 & a & - c \\ & 1 & - a  \\ & & 1
\end{pmatrix}\]
and 
\[ (a,c) \mapsto \begin{pmatrix}
	1 & & \\ - a & 1 & \\ - c & a & 1
\end{pmatrix}.\]
This realization is a reparametrization of the one employed in \cite{OKP}.

\subsection{Criterion for one-invertibility}

\begin{lemma}
	Consider $(a,c) \in G^+$ and $(b,d)$ in $G^-$. Then $U = (a,c)(b,d)$ has entries $U_{ij} = 0$ for all $i + j < 4$ and all $i$ if and only if $d = \bar{c}^{-1}$ and $b = c^{-1} \cdot_\epsilon a$.
	Moreover, in the case that $c$ is invertible, there exists such a $(b,d) \in G^-$ for which $U_{ij} = 0$ with $i + j \ge 4$.
	\begin{proof}
		We have $U_{11} = 1 - h^+(a,b) + cd$, $U_{12} = a - c \cdot_\epsilon b$, and $U_{21} = -b + a \cdot_\epsilon d$.
		We compute $0 = h^+(U_{12}, b) = h^+(a,b) - c h^-(b,b)$ so that $h^+(a,b) = c(d + \bar{d})$.
		If we substitute this value for $h^+(a,b)$ in $U_{11}$, we obtain $1 = c\bar{d}$.		
		The formula for $b$ is immediately obtained from $U_{12}$.
		
		Now, suppose that $c$ is invertible and put $d = \bar{c}^{-1}$ and $b = c^{-1} \cdot_\epsilon a$.
		Note that $h^{-}(b,b) = c^{-1} h^{-}(a,a) \bar{c}^{-1} = \bar{c}^{-1} + c^{-1}$, so that $(b,d) \in G^-$.
		We also see that
		\[ U_{11} = 1 -  h^-(a,a) \bar{c}^{-1}+ c\bar{c}^{-1}  = 0.\]
		We automatically have $U_{12} = 0$ and $U_{21} = 0$ if one uses the definition of $a \cdot_\epsilon d$.
	\end{proof}
\end{lemma}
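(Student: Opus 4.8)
The plan is a short, direct computation inside the graded associative algebra $\mathcal{A}$ of \cite[section 5.3]{OKP}. Because everything sits in a $3\times 3$ shape, the condition ``$U_{ij}=0$ for all $i+j<4$'' is literally the three equations $U_{11}=U_{12}=U_{21}=0$, so it is enough to write those three entries out explicitly and analyse them. First I would multiply out $U=(a,c)(b,d)$ from the two triangular realisations, using the block-multiplication rules of $\mathcal{A}$ (the product of the $(1,2)$-block with the $(2,1)$-block lands in the $(1,1)$-block as the form $h^+$, while $C$ acts on the off-diagonal $A$-blocks through the twisted actions $\cdot_\epsilon$). This produces
\[ U_{11}=1-h^+(a,b)+cd,\qquad U_{12}=a-c\cdot_\epsilon b,\qquad U_{21}=-b+a\cdot_\epsilon d. \]

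For the ``only if'' direction I assume these three all vanish. From $U_{12}=0$ one reads off $a=c\cdot_\epsilon b$ (hence $b=c^{-1}\cdot_\epsilon a$ once $c$ is invertible). Applying $h^+(-,b)$ to the identity $U_{12}=0$ and using the elementary relation $h^+(c\cdot_\epsilon x,y)=c\,h^-(x,y)$ — immediate from the definitions of $\cdot_\epsilon$ and of $h^\pm$ — gives $h^+(a,b)=c\,h^-(b,b)$. Since $(b,d)\in G^-$ means exactly $d+\bar d=h^-(b,b)$, substituting this into $U_{11}=0$ collapses it to $1=c\bar d$, i.e.\ $d=\bar c^{-1}$.

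For the ``if''/``moreover'' direction I assume $c$ invertible, set $d=\bar c^{-1}$ and $b=c^{-1}\cdot_\epsilon a$, and check the two remaining points. First, $(b,d)\in G^-$: using the $G^+$-relation $h^-(a,a)=c+\bar c$ and the sesquilinear scaling of $h^-$ under $\cdot_\epsilon$ one gets $h^-(b,b)=c^{-1}h^-(a,a)\bar c^{-1}=c^{-1}(c+\bar c)\bar c^{-1}=\bar c^{-1}+c^{-1}=d+\bar d$. Second, $U_{11}=U_{12}=U_{21}=0$: this is a one-line back-substitution of the chosen $b,d$ into the formulas above, using the definition of $a\cdot_\epsilon d$ for the vanishing of $U_{12}$ and $U_{21}$.

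I do not expect any genuine obstacle beyond bookkeeping. The one point needing care is keeping the several $\epsilon$-twisted left/right $C$-actions straight and isolating the single sesquilinear identity tying $h^+$, $h^-$ and $\cdot_\epsilon$ together, since that identity is what drives the whole argument; once it is pinned down the rest is substitution. A secondary subtlety worth flagging is that $1=c\bar d$ only yields $d=\bar c^{-1}$ on the nose when one-sided inverses in $C$ are two-sided, so the cleanest formulation records the ``only if'' as the conjunction $c\bar d=1$, $a=c\cdot_\epsilon b$, $b=a\cdot_\epsilon d$, which upgrades to the stated form precisely under the invertibility hypothesis of the ``moreover'' clause.
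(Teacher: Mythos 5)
Your proposal follows essentially the same route as the paper: the same three entry formulas for $U_{11},U_{12},U_{21}$, the same trick of pairing $U_{12}=0$ against $b$ via $h^+$ to extract $h^+(a,b)=c\,h^-(b,b)=c(d+\bar d)$ and hence $1=c\bar d$, and the same back-substitution for the moreover clause. Your extra remark that $c\bar d=1$ only gives a one-sided inverse a priori is a fair (minor) refinement that the paper's proof glosses over, but it does not change the argument.
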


We call $g \in G^+$ \textit{one-invertible} if there exists $g_r, g_l \in G^-$ such that 
\[ g_l g g_r\] reverses the grading of the Lie subalgebra generated by the Lie algebras of $\mathcal{A}$ generated by the Lie algebras of $G^+$ and $G^-$.

\begin{lemma}
	\label{lem: app B criterium}
	Consider $(a,c) \in G^+$ and suppose that there exists an element $w \in C$ such that $w - \bar{w}$ is invertible, then $(a,c)$ is one-invertible if and only if $c$ is invertible.
	\begin{proof}
		The previous lemma gives an explicit formula for $g_r$ in the case that $c$ is invertible.
		Arguing analogously, one obtains $g_l = ( \bar{c}^{-1} \cdot_\epsilon a, \bar{c}^{-1})$ and $U = g_l g g_r$ has only non-zero entries $U_{ij}$ with $i + j = 4$. Hence, $g_l g g_r$ reverses the $\mathbb{Z}$-grading of the full algebra $\mathcal{A}$ and in particular the grading of the Lie algebra generated by the Lie algebras of $G^+$ and $G^-$.
		
		To observe that $c$ is necessarily invertible, we use that the action of any grade reversing $g_l gg_r$ with $g = (a,c) \in G^+$ is given by 
		\[ v \mapsto c v \bar{c} ,\]
		for $v \in \Lie(G^-)_{-2}$. Using that there exists a $v$ such that $w - \bar{w} = cv\bar{c}$ shows that $c$ must be invertible.
	\end{proof}
\end{lemma}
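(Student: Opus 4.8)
The plan is to treat the two directions separately inside the associative $\mathbb{Z}$-graded matrix algebra $\mathcal{A}$, using the explicit realizations of $G^+$ and $G^-$ together with the product formula of the preceding lemma. Write $g=(a,c)$, and let $\mathcal{L}$ denote the Lie subalgebra of $(\mathcal{A},[\cdot,\cdot])$ generated by $\Lie(G^+)$ and $\Lie(G^-)$; this subalgebra is graded, and conjugation by any element of $\langle G^+,G^-\rangle$ preserves it.

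Assume first that $c$ is invertible. The preceding lemma already produces $g_r=(c^{-1}\cdot_\epsilon a,\bar{c}^{-1})\in G^-$ for which the matrix $g\,g_r$ has vanishing $(i,j)$-entry whenever $i+j<4$; running the mirror computation (left-multiplying $g$ by a lower-triangular element of $G^-$) yields $g_l=(\bar{c}^{-1}\cdot_\epsilon a,\bar{c}^{-1})\in G^-$ such that $U:=g_l\,g\,g_r$ is supported on the anti-diagonal, i.e.\ $U_{ij}=0$ unless $i+j=4$. Since $U$ is invertible and anti-diagonal, conjugation by $U$ swaps the first and third rows and columns of $\mathcal{A}$, hence reverses its $\mathbb{Z}$-grading and therefore the induced grading of $\mathcal{L}$. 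Thus $g$ is one-invertible; note that this direction uses nothing about $w$.

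Conversely, assume $g$ is one-invertible and fix $g_l,g_r\in G^-$ with $\tau:=g_l\,g\,g_r$ grade-reversing on $\mathcal{L}$. The heart of the argument is to compute the action of $\tau$ on the bottom piece $\Lie(G^-)_{-2}$, which I identify with the skew-hermitian subspace $\mathcal{S}=\{s\in C:\bar s=-s\}$ of the $(3,1)$-slot of $\mathcal{A}$; likewise $\Lie(G^+)_2$ is $\mathcal{S}$ in the $(1,3)$-slot, and since $\tau$ reverses the grading it restricts to a linear bijection $\Lie(G^-)_{-2}\to\Lie(G^+)_2$. For $v\in\mathcal{S}$ let $X$ be the matrix with $v$ in position $(3,1)$ and zeros elsewhere. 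A direct matrix multiplication gives $g_r\,X\,g_r^{-1}=X$ (because $g_r\in G^-$ and $X$ lies in the lowest graded piece), while conjugation by $g_l\in G^-$ leaves the $(1,3)$-entry of any matrix unchanged; hence the $(1,3)$-entry of $\tau X\tau^{-1}$ equals that of $g\,X\,g^{-1}$, and since $(g^{-1})_{13}=-\bar c$ this entry is $cv\bar c$. As $\tau X\tau^{-1}$ is concentrated in degree $+2$, we conclude that $\tau$ acts on $\Lie(G^-)_{-2}$ by $v\mapsto cv\bar c$.

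It remains to deduce that $c$ is invertible from the facts that $v\mapsto cv\bar c$ is a bijection of $\mathcal{S}$ and that $\mathcal{S}$ contains the unit $w-\bar w$. Surjectivity gives $v_0$ with $cv_0\bar c=w-\bar w$, so $c$ has a right inverse $r:=v_0\bar c(w-\bar w)^{-1}$; I would then upgrade this to a two-sided inverse — for instance $q:=rc-1$ satisfies $cq=0$ and is hermitian (apply the injectivity of $v\mapsto cv\bar c$ on $\mathcal{S}$ to $v=q-\bar q$), so that $p:=1-rc$ is a hermitian idempotent with $cp=p\bar c=0$, which is then forced to vanish. I expect this final ring-theoretic step — turning the one-sided inverse into a genuine inverse, using the bijectivity on $\mathcal{S}$ (and, in the cases of interest, Dedekind-finiteness of $C$) — to be the only delicate point; everything else above is bookkeeping with the matrix realizations and the preceding lemma, and together with the invertibility of $c$ it closes the equivalence.
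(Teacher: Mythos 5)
Your argument is essentially the paper's: the same explicit $g_l=(\bar{c}^{-1}\cdot_\epsilon a,\bar{c}^{-1})$ and $g_r=(c^{-1}\cdot_\epsilon a,\bar{c}^{-1})$ for sufficiency, and the same key computation that any grade-reversing $g_lgg_r$ acts on $\Lie(G^-)_{-2}$ by $v\mapsto cv\bar{c}$ for necessity; your matrix bookkeeping for that computation is correct and in fact more detailed than what the paper records. The one soft spot is the final upgrade from the right inverse $cr=1$ to a two-sided inverse: your idempotent argument does not close as stated, since for a hermitian idempotent $p$ with $cp=p\bar{c}=0$ injectivity of $v\mapsto cv\bar{c}$ only yields $p(w-\bar{w})p=0$, which does not force $p=0$ in a general ring with involution. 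Your fallback — Dedekind-finiteness of $C$ — does finish the argument in every case the paper uses (there $C$ is either $K$ or a quaternion algebra, both finitely generated as modules over the commutative base, so a surjective left-multiplication is bijective); note that the paper's own proof simply asserts invertibility at this point and is no more explicit about this step than you are.
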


\begin{remark}
	The existence of an element $w$ such that $w - \bar{w}$ is invertible is a technical and unnecessary condition. However, the proof of the previous lemma becomes a lot easier and this condition is always satisfied in the cases under consideration in the rest of the article.
	For the general case, one can show that
	\[\Phi(t) = \begin{pmatrix}
		t^2 \\ & 1 \\ & & t^{-2}
	\end{pmatrix}\]
	lies in the group generated by $G^+(R[\sqrt{2}])$ and $G^-(R[\sqrt{2}])$ for all invertible $t$. Therefore, any grade reversing element $\tau$ satisfies $\Phi(t) \tau = \tau \Phi(t^{-1})$. This shows that any grade reversing element for the Lie algebra will reverse the grading of $\mathcal{A}$, which immediately forces the $c$ in $(a,c)$ to be invertible using the first lemma.
	
\end{remark}

\subsection{Application to hermitian cubic norm structures}

\begin{definition}
	We call a structurable algebra arising from a hermitian form \textit{division} if $c$ is invertible for each $(a,c) \in G$ with $a \neq 0$. By the previous lemma, this is often the same as asking that each element is one-invertible. 
\end{definition}

There are two structurable algebras from hermitian forms that played a role in the rest of the article. Both classes of algebras contain an element $w$ for which $w - \bar{w}$ is invertible. So, Lemma \ref{lem: app B criterium} can be applied.

\subsubsection{Division hermitian cubic norm structures with $N = 0$}

\label{B2}
Recall that we have a quadratic extension $K/R$ and a $K$-module $M$ equipped with a hermitian form $h : M \times M \longrightarrow K$. As for hermitian cubic norm structures, it is not hard to show that $K$ is necessarily a field if the structurable algebra is division. Moreover, each $((k,m),k') \in G$ with $(k,m) \neq 0$ has to satisfy $c \neq 0$. Since $c + \bar{c} = k \bar{k}- h(m,m)$, it is sufficient and necessary that $(k,m) \mapsto k \bar{k} - h(m,m)$ is anisotropic.

In this case, we can obtain a hermitian cubic norm structure on $M$ by simply setting $N = 0$, $\sharp = 0$ and $T = h$.

\subsubsection{Division hermitian cubic norm structures with $N \neq 0$ coming from quaternionic hermitian forms}

\label{B3}

Recall that we have a quaternion algebra $Q(K,\alpha) = K \oplus Kw$ with $w^2 = \alpha$ and a $Q(K,\alpha)$ module $M$ with a special kind of hermitian form. Namely, we have anti-quadratic $q : M \longrightarrow Kw$ such that $q(w \cdot m) = \alpha q(m)$ and use $T(a, w \cdot b) w = q(a,b) \alpha$ to define $T : M \times M \longrightarrow K$. Combined, these define $h = T + q(\cdot,\cdot): M \times M \longrightarrow Q(K,\alpha)$. The involution on $Q(K,\alpha)$ is given by $k + k'w \mapsto \bar{k} + k'w$.

If the associated structurable algebra is division, we immediately see that $Q(K,\alpha)$ must be division and that $(q,m) \mapsto q\bar{q} - T(m,m) - q(m,m)$ must be anisotropic, using Lemma \ref{lem: app B criterium}.

To describe the associated hermitian cubic norm structure, use
\begin{itemize}
	\item $J = Kw \oplus M$,
	\item $T(k_1 w + m_1, k_2 w + m_2) = k_1 w \overline{k_2 w} + T(m_1,m_2) = k_1 w k_2 w + T(m_1,m_2)$,
	\item $(kw + m)^\sharp = \bar{k}(wm) + q(m)$,
	\item $N(kw + m) = k w q(m)$.
\end{itemize}
This is a hermitian cubic norm structure since the equations
\begin{enumerate}
	\item $T(k_1 w + m_1, \bar{k}_2(wm_2) + q(m_2)) = k_1 w q(m_2) + (q(m_1,m_2) w) k_2$,
	\item $N(kw + m)(kw + m) = (k q(m) w) w + (k q(m) w) m$,
\end{enumerate}
combined with Lemma \ref{lem: redundant axs} show that all the hermitian cubic norm structure axioms hold.

	\bibliographystyle{alpha}
	\bibliography{bib}

\end{document}